\def\thm@space@setup{%
  \thm@preskip=0.5em\thm@postskip=\thm@preskip%
}
\newtheoremstyle{named}{}{}{\\itshape}{}{\bfseries}{.}{.5em}{\thmnote{#3's }#1}
\theoremstyle{named}
\theoremstyle{plain}
\newtheorem{thm}{Theorem}[section]
\newtheorem{prop}[thm]{Proposition}
\newtheorem{lem}[thm]{Lemma}
\newtheorem{cor}[thm]{Corollary}
\theoremstyle{definition}
\newtheorem{defn}[thm]{Definition}
\newtheorem{assumption}[thm]{Assumption}
\newtheorem{convention}[thm]{Convention}
\theoremstyle{remark}
\newtheorem{rmk}[thm]{Remark}
\newcommand{\Hom}{\mathrm{Hom}}
\newcommand{\A}{\mathbb{A}}
\newcommand{\CC}{\mathbb{C}}
\newcommand{\QQ}{\mathbb{Q}}
\newcommand{\RR}{\mathbb{R}}
\newcommand{\ZZ}{\mathbb{Z}}
\newcommand{\Qlb}{\overline{\mathbb{Q}}_{\ell}}
\newcommand{\GL}{\mathrm{GL}}
\newcommand{\Gal}{\mathrm{Gal}}
\newcommand{\mf}{\mathfrak}
\newcommand{\mc}[1]{\mathcal{#1}}
\newcommand{\mr}[1]{\mathrm{#1}}
\newcommand{\mbb}[1]{\mathbb{#1}}
\newcommand{\ol}[1]{\overline{#1}}
\newcommand{\ul}[1]{\underline{#1}}
\newcommand{\ad}{\mathrm{ad}}
\newcommand{\Spec}{\mathrm{Spec}}
\newcommand{\Sh}{\mathrm{Sh}}
\newcommand{\ltolp}{\lambda \rightsquigarrow \lambda'} 
\newcommand{\fgder}{\mathfrak{g}^{\mathrm{der}}}
\newcommand{\Frob}{\mathrm{Frob}}
\DeclareMathOperator{\Aut}{Aut}
\DeclareMathOperator{\Out}{Out}
\DeclareMathOperator{\Res}{Res}
\DeclareMathOperator{\inn}{inn} 
\DeclareMathOperator{\rec}{rec}
\newcommand{\git}{{\,\!\sslash\!\,}}
\title{Compatibility of canonical $\ell$-adic local systems on adjoint Shimura varieties}
\thanks{We thank Jacob Tsimerman for discussing some of this material. We are grateful to the anonymous refereee for a very helpful report. S.P. was supported by NSF grant DMS-2120325 and DMS-1752313. C.K. was supported by Samsung Science
and Technology Foundation under Project Number SSTF-BA2001-02}
\begin{document}

\author[C.~Klevdal]{Christian Klevdal}
\address{Department of Mathematics, University of California San Diego\\ 9500 Gilman Drive\\ La Jolla, CA 92093, USA}
\email{cklevdal@ucsd.edu}
\author[S.~Patrikis]{Stefan Patrikis}
\address{Department of Mathematics, The Ohio State University\\ 100 Math Tower\\ 231 West 18th Avenue\\ Columbus, OH 43210, USA}
\email{patrikis.1@osu.edu}

\begin{abstract}
    For a Shimura variety $(G, X)$ in the superrigid regime and neat level subgroup $K_0$, we show that the canonical family of $\ell$-adic representations associated to a number field point $y \in \Sh_{K_0}(G, X)(F)$, 
    \[
    \left\{ \rho_{y, \ell} \colon \mr{Gal}(\ol{\QQ}/F) \to G^{\mr{ad}}(\QQ_{\ell}) \right\}_{\ell},
    \]
    form a compatible system of $G^{\mr{ad}}(\QQ_{\ell})$-representations: there is an integer $N(y)$ such that for all $\ell$, $\rho_{y, \ell}$ is unramified away from $N(y) \ell$, and for all $\ell \neq \ell'$ and $v \nmid N(y)\ell \ell'$, the semisimple parts of the conjugacy classes of $\rho_{y, \ell}(\mr{Frob}_v)$ and $\rho_{y, \ell'}(\mr{Frob}_v)$ are ($\QQ$-rational and) equal. We deduce this from a stronger compatibility result for the canonical $G(\QQ_{\ell})$-valued local systems on connected Shimura varieties inside $\mr{Sh}_{K_0}(G, X)$. Our theorems apply in particular to Shimura varieties of non-abelian type and represent the first such independence-of-$\ell$ results in non-abelian type.
\end{abstract}

\maketitle

\section{Introduction}
Let $X$ be a smooth quasi-projective variety over $\CC$, let $G$ be a connected reductive group, and consider a representation
\[
\rho \colon \Gamma := \pi_1^{\mr{top}}(X(\CC), x) \to G(\CC).
\]
It is natural to ask for conditions that would ensure $\rho$ arises from the cohomology of a family of varieties over $X$, for instance in the following sense: for each representation $r \colon G \to \mr{GL}_n$ there is a smooth projective family $f \colon W \to U$ over a (Zariski) open subset $U \subset X$ such that $r \circ \rho|_{\pi_1(U)}$ is isomorphic to a subquotient of the local systems $\oplus_s R^sf_* \CC$. A conjecture of Carlos Simpson predicts a set of sufficient conditions for such a $\rho$ to come from cohomology in this manner.  Let us assume for simplicity that $\rho(\Gamma) \cap G^{\mr{der}}(\CC)$ is Zariski-dense in $G^{\mr{der}}(\CC)$. Suppose that $\rho$ satisfies:
\begin{itemize}
\item the image of $\mr{ab}(\rho)$, $\mr{ab}$ the projection $G \xrightarrow{\mr{ab}} G/G^{\mr{der}}$, is finite;
\item $\rho$ has quasi-unipotent local monodromies along the boundary components $D_i$ of a strict normal crossings divisor compactification $\ol{X} \supset X$, with $\ol{X} \setminus X= \cup_{i=1}^a D_i$;
\item and $\rho$ is $G$-rigid in the sense that it defines an isolated point of the moduli space of representations $\rho' \colon \pi_1(X, x) \to G(\CC)$ such that $\det(\rho')=\det(\rho)$ and, writing $\gamma_i$ for a generator in $\pi_1(X, x)$ of the local monodromy along $D_i$, $\rho'(\gamma_i)$ is conjugate to $\rho(\gamma_i)$.\footnote{Replacing this last conjugacy condition by the requirement that $\rho'(\gamma_i)$ lies in a fixed closed union of conjugacy classes containing $\rho(\gamma_i)$, this moduli space can be constructed using GIT; for a more general formulation, see \cite[\S 4]{klevdal-patrikis:G-rigid}, with the GIT construction appearing in Remark 4.10 of \textit{loc. cit.}}
\end{itemize}
Then Simpson conjectures $\rho$ comes from cohomology. In place of the rigidity condition, it is often easier to work with the condition of cohomological rigidity, that
\[\
\ker(H^1(\pi_1(X, x), \rho(\fgder)) \to \bigoplus_{i=1}^a H^1(\langle \gamma_i \rangle, \rho(\fgder))=0.
\] 
Here we write $\rho(\fgder)$ for the composition of $\rho$ with the adjoint action on the Lie algebra $\fgder$ of $G^{\mr{der}}$. This kernel is a tangent space, and cohomological rigidity amounts to saying that $\rho$ is a smooth isolated point of the moduli space.

A particularly strong form of the cohomological rigidity condition arises from Margulis's superrigidity theorems, and in this paper we present a new application of these results to the arithmetic of Shimura varieties of non-abelian type. Suppose $(G, X)$ is a Shimura datum, and assume for simplicity that $G^{\mr{ad}}$ is $\QQ$-simple and that $Z_G(\QQ) \subset Z_G(\A_f)$ is discrete. Finally, and crucially, assume that $\mr{rk}_{\RR}(G^{\mr{ad}}_{\RR}) \geq 2$. Let $\Gamma \subset G^{\mr{ad}}(\QQ)$ be an arithmetic subgroup, which for us will arise as the fundamental group of a connected Shimura variety $\Gamma \backslash X^+$. Then Margulis has proven (\cite[]{margulis:discrete}) that for any rational representation $G \to \mr{GL}(V)$, the group $H^1(\Gamma, V)$ vanishes; taking in particular $V= \rho(\fgder)$, for $\rho \colon \Gamma \to G^{\mr{ad}}(\QQ)$ the inclusion, the representation $\rho$ is (strongly) $G$-cohomologically rigid. Taking $V= \QQ$ to be the trivial representation, we see that $\mr{ab}(\rho)$ has finite order, and one can further check (see \cite[\S 6.1]{lan-suh:vanishingncpel}) that $\rho$ has quasi-unipotent (even unipotent) local monodromies along the boundary components of a suitable toroidal compactification.  Simpson's conjecture would thus imply that $\rho$ comes from cohomology, and in this special case the conjecture is in essence Deligne's \textit{r\^{e}ve} that general Shimura varieties (with rational weight homomorphism) are moduli of motives.

We do not solve this problem. In the absence of the motives themselves, we study (what should be) their $\ell$-adic realizations and prove that they form compatible systems of $\ell$-adic Galois representations. To make this precise, we state a weak form of our main result:
\begin{thm}[See Corollary \ref{specialthm}]\label{specialthmintro}
Continue with the above assumptions: $(G, X)$ is a Shimura datum with $G^{\mr{ad}}$ $\QQ$-simple, $Z_G(\QQ) \subset Z_G(\A_f)$ discrete, and $\mr{rk}_{\RR}(G_\RR^{\mr{ad}}) \geq 2$. Let $K_0 \subset G(\A_f)$ be a neat compact open subgroup, and let $\mr{Sh}_{K_0}$ be the canonical model over the reflex field $E(G, X)$ of the Shimura variety $\mr{Sh}_{K_0}(G, X)(\CC)=G(\QQ) \backslash (X \times G(\A_f))/K_0$.

Let $F$ be a number field, and let $y \in \mr{Sh}_{K_0}(F)$ be an $F$-rational point. For each $\ell$, there is canonically associated with $y$ a representation $\rho_{y, \ell} \colon \mr{Gal}(\ol{\QQ}/F) \to G(\QQ_{\ell})$, and the projections $\rho_{y, \ell}^{\mr{ad}} \colon \mr{Gal}(\ol{\QQ}/F) \to G^{\mr{ad}}(\QQ_{\ell})$ form a compatible sense in the following sense: for each $y$, there exists $N(y) \in \ZZ$ such that for all $\ell$ and finite places $v \nmid N(y) \ell$ of $F$, 
\begin{itemize} 
\item $\rho^{\ad}_{y, \ell}$ is unramified at $v$;
\item and the semisimple conjugacy classes underlying $\rho^{\ad}_{y, \ell}(\mr{Frob}_v)$ lie in $[G^{\mr{ad}} \git G^{\mr{ad}}](\QQ)$ and are independent of $\ell$.
\end{itemize}
\end{thm}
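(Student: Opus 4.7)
The plan is to reduce to a connected component of $\mr{Sh}_{K_0}$, where the canonical family of $\ell$-adic local systems descends from a single $G^{\mr{ad}}(\QQ)$-rational representation of the topological fundamental group, and then to propagate compatibility from special points to all number-field points using the Galois-theoretic enhancement forced by Margulis superrigidity.

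First I would reduce to the connected setting. Since $K_0$ is neat, $\mr{Sh}_{K_0}(\CC)$ is a finite disjoint union of connected components $\Gamma_i \backslash X^+$, each defined over a finite extension $F' \supset E(G,X)$; after replacing $F$ by $F\cdot F'$, I may assume $y$ lies on a fixed connected component $S^+$ whose fundamental group is the arithmetic lattice $\Gamma \subset G^{\mr{ad}}(\QQ)^+$. The tautological inclusion $\rho_0 \colon \Gamma \hookrightarrow G^{\mr{ad}}(\QQ)$ is a \emph{single} $\QQ$-rational representation whose base change along $\QQ \hookrightarrow \QQ_\ell$ yields the topological avatar of the canonical $\ell$-adic local system on $S^+_\CC$ for every $\ell$. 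This shared $\QQ$-structure is the entire source of compatibility, and the task becomes to show that the Galois-theoretic enhancement obtained (via Margulis superrigidity and the earlier descent work of Klevdal--Patrikis) does not introduce $\ell$-dependence at unramified Frobenius elements.

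Next I would establish the existence of a single $N(y)$ controlling ramification for all $\ell$. Take an integral model $\mc{S}^+$ of $S^+$ over $\cO_F[1/N_0]$ for some $N_0$, and use that the canonical local system, being pulled back from $\rho_0$, spreads to a lisse $\QQ_\ell$-sheaf on $\mc{S}^+ \otimes_{\cO_F} \cO_F[1/N_0\ell]$ (the only $\ell$-dependence coming from the obstruction of the $\ell$-adic coefficients). Setting $N(y) = N_0 \cdot $(primes dividing conductors of the special-point characters below) will bound the ramification uniformly.

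For compatibility of Frobenius conjugacy classes, I would proceed in two stages. Stage one: at a special (CM) point $y_0 \in S^+(F^{\mr{CM}})$, Deligne's reciprocity law for canonical models at special points identifies $\rho_{y_0, \ell}$ with the $\ell$-adic realization of a fixed algebraic Hecke character on the Mumford--Tate torus of $y_0$, which is manifestly $\QQ$-rational and $\ell$-independent on Frobenius. Stage two: for arbitrary $y$, factor $\rho_{y,\ell}$ as $\mr{Gal}(\ol F/F) \to \pi_1^{\et}(S^+_F, \bar y) \to G^{\mr{ad}}(\QQ_\ell)$, where the second arrow is the canonical Galois-enhancement of $\rho_0$ and is independent of $y$. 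The image of $\mr{Frob}_v$ in $\pi_1^{\et}(S^+_F, \bar y)$ is (up to conjugacy) determined by $v$ and the connected component, so $\ell$-independence of $\rho_{y,\ell}^{\mr{ss}}(\mr{Frob}_v)$ reduces to the same statement for the underlying $\QQ$-rational representation of the étale fundamental group --- which is pinned down by its values at the dense set of special points via stage one.

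The main obstacle is stage two, and specifically the assertion that the Galois-theoretic enhancement is genuinely the \emph{same} $\QQ$-rational object for all $\ell$ rather than $\ell$ distinct $\QQ_\ell$-rational enhancements that merely happen to come from one topological source. This has to be extracted from the Margulis/arithmeticity input: the enhancement is characterized by a rigidity property (it is the unique arithmetic local system with the given monodromy along $\pi_1^{\mr{top}}$ up to a twist controlled by the abelianization of $\Gamma$, which is finite by superrigidity), and this rigidity plus agreement at a Zariski-dense set of special points, where the $\QQ$-rationality and compatibility can be verified directly, forces the $\ell$-adic enhancements to arise by base change from one common $\QQ$-rational avatar. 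This is where the superrigidity hypothesis $\mr{rk}_\RR(G^{\mr{ad}}_\RR) \geq 2$ is essential, as it both furnishes $\rho_0$ and suppresses deformations that would otherwise allow $\ell$-dependent Galois extensions.
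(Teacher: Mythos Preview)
Your reduction to a connected component and identification of the $\QQ$-rational topological representation $\rho_0$ are correct and match the paper's setup. However, the heart of your argument---Stage two and the paragraph following it---has a genuine gap, and the paper's actual mechanism is quite different.

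The gap is the claim that the Galois-theoretic enhancement of $\rho_0$ is ``pinned down by its values at the dense set of special points.'' Uniqueness of the extension from the geometric to the arithmetic fundamental group (which you invoke via rigidity) holds for each $\ell$ separately, but gives no way to compare the extensions across $\ell$: the Frobenii at closed points of the integral model lie in the arithmetic part of $\pi_1$, not in the geometric part where $\rho_0$ lives, and special points only witness the Frobenii along their own horizontal sections, not at arbitrary closed points of the special fibers. There is no density argument that promotes compatibility at special-point Frobenii to compatibility at all Frobenii; the special points are far too sparse in the set of closed points of $\mc{S}_{\kappa(v)}$ for \v{C}ebotarev-type reasoning to apply. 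Relatedly, your uniform ramification bound is not established: the spreading-out you describe gives an $N_\ell$ depending on $\ell$, and nothing in your outline forces these to be bounded.

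The paper's route is: restrict to a single special fiber $\mc{S}_{\kappa(v)}$, and there invoke Drinfeld's companions theorem for smooth varieties over finite fields. This produces, from $\rho_{\ell',v}$, an $\ell$-adic companion that can be pulled back along tame specialization and compared to $\rho_0 \otimes \QQ_\ell$ via superrigidity; the upshot (Proposition \ref{nearcompatible} and Corollary \ref{svnearcompatible}) is that $\rho_{\ell,v}$ and $\rho_{\ell',v}$ are companions up to an automorphism $\tau \in \Aut(G^{\mr{ad}}_{\ol\QQ_{\ell'}})$, possibly depending on $v,\ell,\ell'$. The same companions machinery also yields the uniform $N$. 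Special points then enter only to kill the outer-automorphism ambiguity in $\tau$, and for this a \emph{single} carefully constructed special point per fiber suffices---but it must have very specific properties (irreducible Mumford--Tate torus, unramified at $p$, Hodge cocharacter positive for a $\QQ_p$-rational Borel; see Proposition \ref{findspecialpoint} and Lemma \ref{infiniteorder}) so that its Frobenius is not fixed by any outer automorphism. Your proposal omits the companions input entirely and assigns the special points a role they cannot fulfill on their own.
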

\begin{rmk}\label{weakmainrmk}
\begin{enumerate}
\item Here $[G^{\mr{ad}} \git G^{\mr{ad}}]$ denotes the GIT quotient, with $G^{\mr{ad}}$ acting on itself by conjugation; for an algebraically closed field $k$, the map $G^{\mr{ad}}(k) \to [G^{\mr{ad}} \git G^{\mr{ad}}](k)$ induces a bijection between the target and the set of semisimple conjugacy classes in $G^{\mr{ad}}(k)$. Note that the conclusion of the theorem does not assert that $\rho_{y, \ell}^{\mr{ad}}(\mr{Frob}_v)$ is conjugate to an element of $G^{\mr{ad}}(\QQ)$.
\item The need to project to $G^{\mr{ad}}$ to establish the independence of $\ell$ is a limitation of our argument; we will remark more on this point after stating Theorem \ref{mainthmintro}, when we summarize the proof. 
\item The Shimura varieties of non-abelian type all satisfy the adjoint real rank condition of the theorem. They come in four classes: type $E_6$; type $E_7$; ``mixed" type $D_n$ ($G^{\mr{ad}}$ is a, for simplicity, $\QQ$-simple group such that the Lie algebra of $G^{\mr{ad}}_{\RR}$ has simple factors of both types $\mf{so}(2, 2n-2)$, denoted type $D_n^{\RR}$ in the literature, and $\mf{so}^*(2n)$, denoted $D_n^{\mathbb{H}}$); and the case where $G^{\mr{ad}}_{\RR}$ is purely of type $D_n^{\mathbb{H}}$ but $G^{\mr{der}}$ is simply-connected. Theorem \ref{specialthmintro} gives the first such $\ell$-adic compatibility results for any choice of $G$ in the first three cases; in the fourth case, our theorem gives new results when $G^{\mr{der}}$ is simply-connected or (in type $D_{2n}$) when it is a double-cover of $G^{\mr{ad}}$ that is not a form of $\mr{SO}_{4n}$ (the point being that, even though we only prove compatibility in $G^{\mr{ad}}$, the level structure $K_0$ can induce an arithmetic but non-congruence subgroup of $G^{\mr{ad}}(\QQ)$ that in the notation of \cite[Definition 2.1.6]{deligne:canonicalmodels} is not open for the topology $\tau(G_1)$ for any $G_1$ isogenous to $G^{\mr{ad}}$ that yields a connected Shimura variety of abelian type: see \cite[Th\'eor\`eme 2.7.20]{deligne:canonicalmodels}). 
\item When $(G, X)=(\mr{GSp}_{2n}, \mathcal{H}_{n}^{\pm})$ is the Siegel Shimura datum (which for $n>1$ satisfies the hypotheses of the theorem), the representations $\rho_{y, \ell}$ are simply the $\ell$-adic Tate modules of the abelian varieties underlying the points $y$, in their moduli interpretation. More generally, for Shimura varieties of Hodge type, stronger results than Theorem \ref{specialthmintro} are known. We cite in particular the recent work of Kisin-Zhou (\cite{kisin-zhou}; but see also \cite{commelin:abelian}, \cite{kisin:modpabelian}), which proves that for any abelian variety $A$ over $F$ and prime $v$ of good reduction (and $v \nmid 2$) of $A$, the semisimple conjugacy classes $\rho_{A, \ell}(\mr{Frob}_v)$ are independent of $\ell$ in the Mumford-Tate group of $A$ (possibly after replacing $F$ by a finite extension). They prove this by constructing canonical integral models, equipped with Kottwitz-Rapoport stratification on the special fiber, for certain Shimura varieties of Hodge type, notably allowing the group $G_{\QQ_p}$ ($p$ being the rational prime below some $v$ as in Theorem \ref{specialthmintro}) to be merely quasi-split at $p$ and the level to be a very special parahoric. This latter condition implies the ordinary locus is dense, and Kisin-Zhou prove a Serre-Tate style lifting result that points in the ordinary locus of the special fiber lift to special (CM) points. This yields the compatibility on the dense open ordinary locus, and by careful choice of curves (Bertini theorems) passing through both the ordinary locus and points in successive strata of the Kottwitz-Rapoport stratification, the function field Langlands correspondence then yields the independence-of-$\ell$ property in the entire special fiber. Such methods of course rely heavily on the deep theory of canonical integral models, developed over decades, of Shimura varieties of abelian type; in the generality of our main theorems, we have no access to such integral model technology.  
\item The representations $\rho_{y, \ell}$ of Theorem \ref{specialthmintro} are known to be de Rham by the de Rham rigidity theorem of Liu-Zhu (\cite{liu-zhu:dRrigidity}), which reduces the de Rham property for any point of the Shimura variety to the corresponding property for a single point (of each geometrically connected component); for special points this is easy. Establishing compatibility at $v \vert \ell$ as well (for $v$ that are suitably primes of good reduction) is work in progress with Ohio State PhD student Jake Huryn and Kiran Kedlaya. See also Remark \ref{mainthmintrormk} for the known crystalline properties of these Galois representations.  
\item In particular, we obtain a rich source of $E_6$ and $E_7$ compatible systems of Galois representations over number fields, and applying the Hilbert irreducibility theorem (to Theorem \ref{mainthmintro}) we can ensure that many of these have Zariski-dense image, at least for some $\ell$. In each case there is one previous construction of compatible systems of $E_6(\ol{\QQ}_{\ell})$ or $E_7(\ol{\QQ}_{\ell})$-valued Galois representations, namely \cite{bcelmpp} (for $E_6$, using Galois deformation theory and potential automorphy theorems) and \cite{yun:exceptional} (for $E_7$, using the construction of rigid Hecke eigensheaves on $\mathbb{P}^1 \setminus \{0, 1, \infty\}$). Our representations are distinct (for Hodge-theoretic reasons) from those of \cite{bcelmpp} and \cite{yun:exceptional} and come in higher-dimensional families, but in our case we cannot control the possible number fields $F$. 
\end{enumerate}
\end{rmk}
Theorem \ref{specialthmintro} follows from specializing our main theorem, which establishes compatibility properties for the canonical $\ell$-adic local systems on the (connected) Shimura varieties. Consider a Shimura datum $(G, X)$ and neat level structure $K_0$ as in Theorem \ref{specialthmintro}. For each compact open subgroup $K \subset K_0$ we have a morphism $\mr{Sh}_K \to \mr{Sh}_{K_0}$ of canonical models over $E(G, X)$, and we let $\mr{Sh}= \varprojlim_K \mr{Sh}_K$ be the limit; then $\mr{Sh}(\CC)= G(\QQ) \backslash (X \times G(\A_f))= G(\QQ)_+ \backslash (X^+ \times G(\A_f))$ for a choice $X^+$ of connected component of $X$. Fix a basepoint $s \in \mr{Sh}(\CC)$, inducing basepoints $sK$ at all finite levels, and let $S_{K_0, s}$ be the geometrically connected, over some finite extension $E_{K_0, s}$ of $E(G, X)$, component of $\mr{Sh}_{K_0}$ containing $sK_0$. From the choice of basepoint one canonically obtains a homomorphism $\pi_1(S_{K_0, s}, s) \to K_0$, inducing (by projection) for each $\ell$ representations 
\[
\rho_{K_0, s, \ell} \colon \pi_1(S_{K_0, s}, s) \to G(\QQ_{\ell}).
\]
(See \S \ref{localsysdef} for the details of this construction.) These are the canonical $\ell$-adic local systems whose compatibility we study, and our main theorem, which readily implies Theorem \ref{specialthmintro}, is:

\begin{thm}[See Theorem \ref{mainthm}]\label{mainthmintro}
Continue with the hypotheses on $(G, X)$ and $K_0$ of Theorem \ref{specialthmintro}. Fix a basepoint $s \in \mr{Sh}(\CC)$, and let $\rho_{\ell} \colon \pi_1(S_{K_0, s}, s) \to G^{\mr{ad}}(\QQ_{\ell})$ denote the adjoint projection of the homomorphism $\rho_{K_0, s, \ell}$. 

Then there is an integer $N$ and an integral model $\mc{S}_{K_0, s} \to \Spec(\mc{O}_{E_{K_0, s}}[1/N])$ such that:
\begin{enumerate}
\item For all $\ell$, $\rho_{\ell}$ extends to (i.e., factors through) a homomorphism 
\[
\pi_1(\mc{S}_{K_0, s}[1/\ell], s) \to G^{\mr{ad}}(\QQ_{\ell}).
\]
\item For all $\ell$ and $\ell'$, and all closed points $x \in \mc{S}_{K_0, s}[1/(\ell \ell')]$, the associated semisimple conjugacy classes of $\rho_{\ell}(\mr{Frob}_x)$ and $\rho_{\ell'}(\mr{Frob}_x)$ (which are elements of $[G^{\mr{ad}} \git G^{\mr{ad}}](\QQ_{\ell})$ and $[G^{\mr{ad}} \git G^{\mr{ad}}](\QQ_{\ell'})$, respectively) are $\QQ$-rational and equal to the same element of $[G^{\mr{ad}} \git G^{\mr{ad}}](\QQ)$.
\end{enumerate}
\end{thm}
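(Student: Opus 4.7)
The plan is to combine Margulis superrigidity --- which makes the geometric monodromy rigid and essentially $\QQ$-rational --- with the explicit Shimura--Deligne reciprocity law at CM points --- which gives manifestly $\ell$-independent Frobenius conjugacy classes on a Zariski-dense test set --- and then to propagate compatibility from the test set to all closed points via rigidity of the moduli of adjoint $\ell$-adic local systems.

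First I would spread $S_{K_0, s}$ out to a smooth integral model $\mc{S}_{K_0, s}$ over $\mc{O}_{E_{K_0, s}}[1/N]$ for some $N$. The topological monodromy map $\pi_1^{\top}(S_{K_0, s}(\CC), s) \to G^{\mr{ad}}(\QQ)$ factors through an arithmetic subgroup and is manifestly independent of $\ell$, so its $\ell$-adic completion yields a geometric monodromy representation that depends on $\ell$ only through the inclusion $G^{\mr{ad}}(\QQ) \hookrightarrow G^{\mr{ad}}(\QQ_{\ell})$. To extend $\rho_{\ell}$ across the arithmetic fundamental group of $\mc{S}_{K_0, s}[1/\ell]$ for $\ell \nmid N$, I would invoke Lan--Suh's unipotent local monodromy along the boundary of a suitable toroidal compactification, together with standard tameness and specialization arguments, enlarging $N$ as needed.

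The substance of the theorem is then controlling how $\mr{Gal}(\ol{\QQ}/E_{K_0, s})$ acts on the geometric monodromy $\rho_{\ell}^{\mr{geom}}$. By Margulis superrigidity, $\rho_{\ell}^{\mr{geom}}$ is $G^{\mr{ad}}$-cohomologically rigid (with its fixed unipotent boundary monodromy and finite abelianization), so for each $\sigma$ the conjugate $(\rho_{\ell}^{\mr{geom}})^{\sigma}$ is $G^{\mr{ad}}(\ol{\QQ}_{\ell})$-conjugate back to $\rho_{\ell}^{\mr{geom}}$ via some $g_{\sigma}$, well-defined up to the centralizer of the image. The semisimple class of $g_{\Frob_x}$ in $[G^{\mr{ad}} \git G^{\mr{ad}}](\ol{\QQ}_{\ell})$ then records $\rho_{\ell}(\Frob_x)$, so our task reduces to showing this class is $\QQ$-rational and $\ell$-independent for every closed point $x$. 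For this I would use the Zariski density of CM points: at any special point $y$, Shimura--Deligne reciprocity for canonical models gives $\rho_{y, \ell}$ explicitly via a reflex-norm map from the idele class group of the reflex field of $y$ into (a central quotient of) $G^{\mr{ad}}(\QQ)$, so the Frobenius classes at CM points are manifestly $\QQ$-rational and the same for every $\ell$. After enlarging $N$, CM points specialize to a Zariski-dense set of closed points on each residue-characteristic fiber of $\mc{S}_{K_0, s}$, and rigidity of $\rho_{\ell}^{\mr{geom}}$ combined with compatibility on this test set --- leveraging the moduli-theoretic machinery developed in \cite{klevdal-patrikis:G-rigid} --- would propagate the conclusion to all closed points.

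I expect the main obstacle to be this propagation step: turning compatibility on the (dense) CM locus into compatibility at arbitrary closed points requires showing that the Galois action on the rigid moduli of adjoint $\ell$-adic local systems is sufficiently algebraic and discrete that dense agreement forces global agreement, and it requires careful tracking of the centralizer ambiguity in $g_{\sigma}$ uniformly across $\ell$. The restriction to the adjoint projection is essential for the same reason it is essential in the theorem statement: Margulis rigidity does not constrain the central or abelian part of the representation, and the $\ell$-dependence of those abelian CM-type contributions cannot be eliminated in the absence of the canonical integral-model technology that Kisin--Zhou exploit in the abelian-type case.
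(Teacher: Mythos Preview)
Your proposal has a genuine gap at precisely the point you identify as the main obstacle: the propagation step from CM points to all closed points does not work as you describe, and the paper's route around this is fundamentally different from what you sketch.

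The function $x \mapsto [\rho_{\ell}(\Frob_x)] \in [G^{\mr{ad}}\git G^{\mr{ad}}](\ol{\QQ}_{\ell})$ on closed points of a fiber $\mc{S}_v$ is not a morphism of varieties, so Zariski density of CM specializations buys you nothing on its own, and the machinery of \cite{klevdal-patrikis:G-rigid} (which concerns integrality of rigid local systems) does not contain a propagation principle of the kind you invoke. What the paper actually uses is the \emph{companions correspondence} (Deligne's conjecture, proven by Drinfeld) on the finite-type variety $\mc{S}_v$ over $\kappa(v)$: one takes the $\ell$-adic companion $\rho_{\ell' \leadsto \ell, v}$ of $\rho_{\ell', v}$, which by construction already has Frobenius classes matching $\rho_{\ell', v}$ at \emph{every} closed point. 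Pulling this companion back through tame specialization to the topological fundamental group and applying superrigidity shows $\rho_{\ell' \leadsto \ell, v} = \tau(\rho_{\ell, v})$ for some $\tau \in \Aut(G^{\mr{ad}}_{\ol{\QQ}_{\ell}})$. So compatibility holds on the whole fiber up to a single outer-automorphism ambiguity, and the remaining task is only to show $\tau$ is inner.

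For that, density of special points is again not the mechanism: the paper constructs \emph{one} special point $[x,a]$ (for each $v$) with very particular properties --- the adjoint Mumford--Tate torus $T_x^{\mr{ad}} = \prod_i T_{x,i}$ has each $T_{x,i}$ irreducible over $\QQ$ and pairwise non-isomorphic, $T_{x,\QQ_p}$ is unramified, and $\mu_x$ is positive for a $\QQ_p$-rational Borel. These conditions (arranged via a Prasad--Rapinchuk-type approximation argument and a Hasse-principle trick from \cite{kisin-madapusipera-shin:hondatate}) force $\rho_{\ell}(\Frob_w)$ at that point to have infinite order in every simple factor, hence to Zariski-generate the full maximal torus $T_x^{\mr{ad}}$; any $\tau$ fixing it must then fix the root datum and be inner. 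Your sketch, which treats CM reciprocity as a black box producing many test points, misses both the companions input and this careful symmetry-breaking construction. The same companions-plus-superrigidity argument is also what makes part (1) work: ``enlarging $N$ as needed'' hides the real issue, which is that $N$ must be independent of $\ell$, and this uniformity is exactly what the companions argument (combined with \cite[Corollary 5.2]{drinfeld:deligneconj}) delivers.
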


\begin{rmk}\label{mainthmintrormk}
\begin{enumerate}
\item For a number field $F$, an element $y \in \mr{Sh}_{K_0}(F)$ is induced by an element $y \in \mc{S}_{K_0, s}(\mc{O}_F[1/N(y)])$ for some integer $N(y)$ (divisible by $N$). Pulling back the compatibility in Theorem \ref{mainthmintro} to $\Spec(\mc{O}_F[1/N(y)])$ we obtain Theorem \ref{specialthmintro}.
\item The construction of the integral model is ``soft," by spreading out a smooth compactification of $S_{K_0, s}$ whose boundary is a strict normal crossings divisor. This suffices for arguments with tame specialization maps (as in \S \ref{abstract}) and for arguments with special points, since in \cite[Theorem 4.1]{pila-shankar-tsimerman:andre-oort} it is shown that all special points are $\ol{\ZZ}[1/N]$-integral (for some $N$ independent of the special point) for such a model. 
\item By \cite[Theorem 5.1]{pila-shankar-tsimerman:andre-oort} (essentially relying on the Appendix to \textit{loc. cit.} by Esnault and Groechenig), there is a number field $E'$ such that for almost all $\ell$ these local systems become crystalline when restricted to $E'_v$ for $v \vert \ell$.
\end{enumerate}
\end{rmk}

\begin{rmk}\label{langlands-rapoport-remark}
After the initial submission of this paper in March 2023, Bakker, Shankar, and Tsimerman posted a remarkable paper (\cite{bakker-shankar-tsimerman:canonicalmodels}) that establishes existence of canonical integral models, away from finitely many primes, for Shimura varieties of non-abelian type, as well as a number of applications. Their models are the ``soft" integral models that we also use, following \cite{pila-shankar-tsimerman:andre-oort}, and have an inexplicit set of bad primes as in our Theorem \ref{mainthmintro}. Some of their results depend on the results of the present paper. With the extension property of integral models established, we are naturally led to wonder what approximations to the Langlands-Rapoport conjecture describing mod $p$ points group-theoretically (see \cite[\S 3]{kisin:modpabelian} for background; a proof in abelian type is the main theorem of \textit{loc. cit.}) might be in reach. Very roughly, the conjecture gives a group-theoretic decomposition
$\Sh_{K_p}(\ol{\mathbb{F}}_p)= \bigsqcup_{[\phi]} S(\phi)$
of the mod $p$ points (at infinite level away from $p$ and hyperspecial level $K_p$ at $p$) into sets $S(\phi)$ of a group-theoretic nature attached to a ``fake" $G$-motive over $\ol{\mathbb{F}}_p$, rigorously described as a suitable representation of the quasi-motivic Galois gerbe (see \cite[\S 3.1]{kisin:modpabelian}). An intermediate step in attacks on the conjecture is the analysis of Kottwitz triples (see \cite[\S 4.3]{kisin:modpabelian}), which can be associated to the parameters $\phi$ or, as in Kottwitz's work (\cite{kottwitz:lambda-adic}, \cite{kottwitz:annarbor}, \cite{kottwitz:modppoints}), used directly to tackle applications to the construction of automorphic Galois representations. Theorem \ref{mainthmintro} is a first step in associating Kottwitz triples to elements of $\Sh_{K_p}(\ol{\mathbb{F}}_p)$: at least when $G= G^{\mr{ad}}$, it produces the desired $(\gamma_{\ell})_{\ell \neq p} \in G(\mathbb{A}^p_f)$ and produces a weak version of the $\gamma_0 \in G(\QQ)$ that must have the same $G(\ol{\QQ}_{\ell})$-conjugacy class as each $\gamma_{\ell}$ (we only obtain a $\QQ$-rational conjugacy class so still face a stable conjugacy obstruction). We expect our forthcoming work noted in part (5) of Remark \ref{weakmainrmk} to produce Kottwitz's $\gamma_p$, and we plan to continue the analysis of this problem in the future. A closely-related problem (as evident in the arguments of \cite{kisin:modpabelian}) is the lifting of mod $p$ points to special points in characteristic zero: for the $\mu$-ordinary locus, \cite[Theorem 9.7]{bakker-shankar-tsimerman:canonicalmodels} establishes this liftability using Theorem \ref{mainthmintro}. 
\end{rmk}

We now sketch the path to Theorem \ref{mainthmintro}. In the first part of the theorem, each $\rho_{\ell}$ has good reduction away from finitely many bad fibers based on general ramification arguments (using that the representations are geometrically irreducible). To show that the bad fibers can be taken to be independent of $\ell$ we use the companions correspondence (Deligne's conjecture and Drinfeld's theorem) for smooth varieties over finite fields: in brief, if $\rho_{\ell}$ is good along the $v$-fiber $\mc{S}_v$ of $\mc{S}_{K_0, s}$ and some $\rho_{\ell'}$ is not (for $v \nmid N \ell \ell'$), then we consider the $\ell'$-adic companion $\rho_{v, \ell \rightsquigarrow \ell'}$ of $\rho_{\ell}|_{\mc{S}_v}$. It is tamely ramified along the boundary divisor and can be pulled back along the tame specialization map to a representation of $\pi_1(S_{K_0, s} \otimes \CC, s)$ and hence of the topological fundamental group $\Gamma= \pi_1^{\mr{top}}(S_{K_0, s}(\CC), s)$. This latter group is an arithmetic group in a higher-rank Lie group, so Margulis's superrigidity theorems apply. The precise form we use relies on our target $G^{\mr{ad}}$ being an adjoint group but ensures the strong conclusion that any two homomorphisms $\rho_1, \rho_2 \colon \Gamma \to G^{\mr{ad}}(\Omega)$ with Zariski-dense image, and with $\Omega$ any algebraically closed field of characteristic zero, satisfy $\rho_1=\tau(\rho_2)$ for some automorphism $\tau$ of $G^{\mr{ad}}_{\Omega}$. (See Definition \ref{superrigdef}, where we abstract this notion, and Proposition \ref{svsuperrig}, where we verify that results of Margulis imply it holds in our setting.) At this point we can deduce (see Proposition \ref{nearcompatible}) that the sets of bad fibers are independent of $\ell$ and that for all good fibers $v$, $\rho_{\ell'}|_{\mc{S}_v}= \tau(\rho_{v, \ell \rightsquigarrow \ell'})$, i.e., that along $\mc{S}_v$ $\rho_{\ell}$ and $\rho_{\ell'}$ are at least companions up to the automorphism $\tau$, which may depend on $v$, $\ell$, and $\ell'$, of $G^{\mr{ad}}_{\ol{\QQ}_{\ell'}}$. This part of the argument is presented in an abstract form in \S \ref{abstract}.\footnote{Even though it is possible to prove more general statements (see Remark \ref{non-adjoint}) for some of this section's results, at the referee's suggestion in this abstract discussion we restrict to local systems valued in adjoint groups: ultimately we cannot prove our main theorem for the non-adjoint Galois representations, and the arguments of \S \ref{abstract} then avoid unenlightening complications.} We note that these arguments are guided by previous work on Simpson's integrality conjecture for (cohomologically) rigid representations, by Esnault-Groechenig (\cite{esnault-groechenig:rigid}) in the case $G= \mr{GL}_n$ and subsequently by us (\cite{klevdal-patrikis:G-rigid}) for general $G$.

We specialize the results of \S \ref{abstract} to the case of rigid Shimura varieties at the start of \S \ref{svsection}, and we then face the problem of showing these automorphisms $\tau$ are inner. Note that the group $G^{\mr{ad}}$ will be some restriction of scalars $\Res_{F/\QQ}(G^{s})$ for $G^s/F$ an absolutely simple group (and, in the Shimura variety setting, $F$ totally real). Thus over an algebraically closed $\Omega$, $G^{\mr{ad}}_{\Omega} \cong \prod_{\sigma \colon F \to \Omega} G^s_{\sigma}$ will have many outer automorphisms coming from permutations, in addition to any Dynkin diagram automorphisms of the simple factors. If we knew Theorem \ref{specialthmintro} in advance---compatibility of the local systems after specialization to ``horizontal" sections (in contrast to the vertical fibers $\mc{S}_v$)---then we could deduce Theorem \ref{mainthmintro}; this is indeed shown in great generality in \cite{drinfeld:deligneconj}. We understand in advance only the horizontal sections coming from special points of $\mr{Sh}_{K_0}$, and we use judicious choice of special points to show all such $\tau$ are inner. The rough idea is, with $v$ above some rational prime $p$ fixed, to choose a special point $y \colon \mc{O}_C[1/N(y)] \to \mc{S}_{K_0, s}$, for some number field $C \subset \mbb{C}$ and integer $N(y)$, such that some Frobenius element over $v$ is in very general position. More precisely, we arrange that:
\begin{itemize}
\item $C$ is unramified over $p$, and $p \nmid N(y)$;
\item the Mumford-Tate group $T_y \subset G$ of $y$ is a torus whose image $T_y^{\mr{ad}}$ in $G^{\mr{ad}}$ is irreducible (contains no proper $\QQ$-subtori);
\item for $w$ a place of $C$ above $v$, the specialization $\rho_{\ell, y}(\Frob_w)$ has infinite order.
\end{itemize}
As explained in the proof of Theorem \ref{mainthm}, these properties suffice along with the known compatibility of $\rho_{\ell, y}$ and $\rho_{\ell', y}$ to force $\tau$ to be inner. Producing such special points relies on four ingredients: 
\begin{itemize}
\item elementary calculation with the Galois action on special points (which characterizes canonical models); 
\item the fact that the models we use have a uniform integrality property for all special points, as established in \cite[Theorem 4.1]{pila-shankar-tsimerman:andre-oort};
\item a generalization of a result (\cite[Theorem 1]{prasad-rapinchuk:irrtori}) of Prasad and Rapinchuk (relying on weak approximation for and rationality of the variety of maximal tori) to produce many irreducible tori $T_y$ with desired local properties;
\item an application of work of Kisin-Madapusi Pera-Shin (\cite{kisin-madapusipera-shin:hondatate}) to arrange a positivity property that ensures $\rho_{\ell, y}(\Frob_w)$ has infinite order.
\end{itemize}
The details of these arguments appear in \S \ref{svsection}.
\subsection{Notation}
For a point $x$ of a scheme $X$, we write $\kappa(x)$ for the residue field at $x$. For a scheme $S$ and two $S$-schemes $X \to S$ and $T \to S$ we will write $X_T$ for the base-change $X \times_S T$. When $T=\Spec(R)$ is affine, we will simply write $X_R$ for $X_T$. When $S= \Spec(A)$ is also affine, we sometimes write $X \otimes_A R$ for this base-change. 

For a scheme $X$ and a geometric point $\bar{s}$, we write $\pi_1(X, \bar{s})$ for the \'etale fundamental group. When $X$ is a scheme of finite-type over $\ZZ$, $x \in X$ is a closed point, we write $\mr{Frob}_x$ for any representative in $\pi_1(X, \bar{s})$ of the conjugacy class of geometric Frobenius elements at $x$. When $X$ is moreover regular and admits a regular compactification with boundary a normal crossings divisor $D$, we write $\pi_1^t(X, \bar{s})$ for the tame (along $D$) fundamental group of $X$ (\cite{grothendieck:tame}). See \cite[Theorem 4.4]{kerz-schmidt:tameness} for the independence of choice of compactification and several equivalent notions of tameness.

We write $\mathbb{A}$ for the ring of adeles of $\QQ$ and $\mathbb{A}_f$ for the finite adeles. For $F$ a field and $F^{\mr{sep}}$ a separable algebraic closure we write $\Gamma_F$ for $\mr{Gal}(F^{\mr{sep}}/F)$ when $F$ is already given as a subfield of $\CC$, we always take $F^{\mr{sep}} = \ol{F}$ to be the algebraic closure of $F$ in $\CC$. For a number field $F$ we write $\mr{rec}_F \colon \mathbb{A}_F^\times \to \Gamma_F^{\mr{ab}}$ for the reciprocity homomorphism of global class field theory, normalized to take uniformizers to geometric Frobenii.

Throughout $G$ will be a connected reductive group over $\QQ$. We write $G^{\mr{ad}}$ for the adjoint group and $G^{\mr{der}}$ for the derived group; for a maximal torus $T$ in $G$ we will also abusively write $T^{\mr{ad}}$ and $T^{\mr{der}}$ for the image of $T$ in $G^{\mr{ad}}$ and $T \cap G^{\mr{der}}$. We write $G(\RR)^+$ for the connected component of the identity and $G(\QQ)_+$ for the subgroup of $G(\QQ)$ projecting to $G^{\mr{ad}}(\RR)^+$. For $L/K$ a finite separable extension of fields and $H$ an affine algebraic group over $L$, we write $\Res_{L/K}(H)$ for the Weil restriction to $K$ of $H$. For an algebraic group $H$ over $K$ and $h \in H(K)$, we write $\inn(h)$ for the conjugation by $h$ automorphism of $H$, $\inn(h)(x)=hxh^{-1}$. We write $X_\bullet(H), X^\bullet(H)$ for the cocharacter and character groups (over a choice of algebraic closure of $K$). 

\section{Abstract arguments: toward compatible systems associated to superrigid representations}\label{abstract} 

\begin{lem}\label{descentlem}
Let $\Pi$ be a profinite group, let $\Pi'$ be a closed normal subgroup, and let $H$ be an adjoint algebraic group over $\Qlb$. Suppose that $\rho \colon \Pi' \to H(\Qlb)$ is a continuous homomorphism with Zariski-dense image in $H$ such that for all $x \in \Pi$, the homomorphism $\rho^x \colon \Pi' \to H(\Qlb)$ given by $\rho^x(h)= \rho(xhx^{-1})$ is $H(\Qlb)$-conjugate to $\rho$: that is, there exists $h_x \in H(\Qlb)$ such that $\rho^x=h_x \rho h_x^{-1}$. Then $h_x$ is unique, and $\rho$ has a unique extension to a homomorphism $\rho \colon \Pi \to H(\Qlb)$; this extension satisfies $\rho(x)=h_x$ for all $x \in \Pi$.
\end{lem}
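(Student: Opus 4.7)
The plan is to derive everything from a single uniqueness statement that is immediate from the hypotheses. If two elements $h, h' \in H(\Qlb)$ both satisfy $\rho^x = h\rho h^{-1} = h'\rho(h')^{-1}$, then $h^{-1}h'$ centralizes the image $\rho(\Pi')$, hence its Zariski closure, which by hypothesis is all of $H$. Since $H$ is adjoint, its center is trivial, so $h = h'$. This gives uniqueness of $h_x$ for each $x \in \Pi$.

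With uniqueness in hand, define $\tilde\rho(x) := h_x$, and verify that $\tilde\rho$ is a homomorphism $\Pi \to H(\Qlb)$ extending $\rho$. For $y \in \Pi'$, conjugation by $\rho(y)$ intertwines $\rho$ with $\rho^y$ tautologically, so uniqueness forces $h_y = \rho(y)$; this is the extension property. For $x_1, x_2 \in \Pi$ and $g \in \Pi'$, normality of $\Pi'$ and two applications of the defining relation yield
\[
\rho^{x_1 x_2}(g) = \rho^{x_1}(x_2 g x_2^{-1}) = h_{x_1}\rho^{x_2}(g)h_{x_1}^{-1} = (h_{x_1}h_{x_2})\rho(g)(h_{x_1}h_{x_2})^{-1},
\]
so uniqueness applied to $x_1 x_2$ gives $h_{x_1 x_2} = h_{x_1}h_{x_2}$.

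Uniqueness of the extension is then a one-line check: any homomorphism $\rho' \colon \Pi \to H(\Qlb)$ restricting to $\rho$ on $\Pi'$ automatically satisfies $\rho'(x)\rho(g)\rho'(x)^{-1} = \rho(xgx^{-1}) = \rho^x(g)$ for every $g \in \Pi'$, so $\rho'(x)$ realizes the defining property of $h_x$ and therefore coincides with $\tilde\rho(x)$.

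The essential content of the lemma is that triviality of $Z(H)$ promotes the conjugating elements $h_x$ to a well-defined function of $x$, after which the group law for $\tilde\rho$ is forced and nothing more is to be done. There is no real obstacle; the only point I would flag is that the statement asks only for a homomorphism and makes no continuity claim, which keeps the argument purely formal. In downstream applications one would presumably also want $\tilde\rho$ to be continuous, and that would require a separate argument leveraging compactness of $\Pi$ together with the fact that compact subgroups of $H(\Qlb)$ are contained in $H(L)$ for some finite extension $L/\QQ_\ell$.
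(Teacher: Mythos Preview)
Your proof is correct and follows exactly the same approach as the paper's: both argue that the trivial centralizer (from Zariski-density plus $H$ adjoint) makes $h_x$ unique, check $h_x=\rho(x)$ for $x\in\Pi'$, and use uniqueness together with the identity $\rho^{xy}=(\rho^x)^y$ to get multiplicativity. You have simply written out the details that the paper compresses into three sentences; your remark on continuity is a reasonable side observation but not part of the lemma as stated.
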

\begin{proof}
The centralizer of $\rho$ in $H$ is trivial, so $h_x$ is uniquely determined for each $x \in \Pi$. For $x\in \Pi'$, $h_x= \rho(x)$, so the assignment $x \mapsto h_x$ extends $\rho$. It defines a homomorphism on $\Pi$ because $h_x$ is unique and $\rho^{xy}= (\rho^x)^y$.
\end{proof}

Let $F \subset \CC$ be a number field, let $X_F$ be a smooth geometrically connected scheme of finite type over $F$, and let $\bar{x} \in X_F(\ol{\QQ})$ be a geometric point. There exists an open (dense) immersion $X_F \subset \bar{X}_F$ into a smooth projective variety $\bar{X}_F$ over $F$, with $D_F= \bar{X}_F \setminus X_F$ moreover a strict normal crossings divisor. For some $N \in \ZZ$, the quadruple $(X_F, \bar{X}_F, D_F, \bar{x})$ spreads out to data $(X, \bar{X}, D, \bar{x})$ over $\mc{O}_F[1/N]$, where $\bar{X} \to \Spec(\mc{O}_F[1/N])$ is smooth projective with geometrically connected fibers, $D= \bar{X} \setminus X$ is a relative strict normal crossings divisor,  and $\bar{x} \in X(\ol{\ZZ}[1/N])$. We can also view $\bar{x}$ as a point $x$ of the manifold $X^{\mr{an}}:= X(\CC)^{\mr{an}}$.

\begin{defn}\label{superrigdef}
Let $G$ be an {adjoint} reductive group over $\QQ$, and let $\Gamma$ be a discrete group. We say that $\Gamma$ is $G$-superrigid if for any algebraically closed field $\Omega$ of characteristic zero and any two homomorphisms $\rho_1, \rho_2 \colon \Gamma \to G(\Omega)$ with Zariski-dense image, there exists an automorphism $\tau \in \Aut(G_{\Omega})$ such that $\rho_1= \tau(\rho_2)$.
\end{defn}

\textbf{We assume for the remainder of  \cref{abstract} that $G$ is an adjoint group over $\QQ$, and that the fundamental group $\Gamma:= \pi_1^{\mr{top}}(X^{\mr{an}}, x)$ of $X^{\mr{an}}$ is $G$-superrigid.}

Let $\rho \colon \Gamma \to G(\CC)$ be a homomorphism with Zariski-dense image. Then the arguments of \cite{esnault-groechenig:rigid}, \cite{klevdal-patrikis:G-rigid} (a variant of which we use in Proposition \ref{nearcompatible}) show that $\rho$ is integral, i.e.\ is conjugate to a $G(\mc{O}_L)$-valued representation for some number field $L \subset \overline{\QQ}$. For each place $\lambda$ of $\overline{\QQ}$, we obtain a unique continuous extension $\Gamma \to \pi_1(X_{\CC},\bar{x}) \to G(\ol{\ZZ}_\lambda)$ of $\rho$, or equivalently $\rho_{\lambda, \ol{\QQ}} \colon \pi_1(X_{\ol{\QQ}}, \bar{x}) \to G(\ol{\ZZ}_{\lambda})$. The outer action of $\mr{Gal}(\ol{\QQ}/F)$ on $\pi_1(X_{\ol{\QQ}}, \bar{x})$ induces an action on the set of conjugacy classes of continuous, Zariski dense homomorphisms $\pi_1(X_{\ol{\QQ}}, \bar{x}) \to G(\ol{\QQ}_\lambda)$, which is a torsor under the finite group $\mr{Out}(G_{\ol{\QQ}_\lambda})$ by $G$-superrigidity. Thus there is a (necessarily unique by \cref{descentlem}) continuous extension $\rho_{\lambda, F(\lambda)}$ of $\rho_{\lambda, \ol{\QQ}}$ to $\pi_1(X_{F(\lambda)}, \bar{x})$, with $F(\lambda)$ the fixed field of the stabilizer of the conjugacy class of $\rho_{\lambda, \ol{\QQ}}$.

\begin{prop}\label{nearcompatible}
Assume that $F(\lambda) = F$ for all finite places $\lambda$ of $\ol{\QQ}$, and write $\rho_\lambda = \rho_{\lambda, F} \colon \pi_1(X_F,\bar{x}) \to G(\ol{\QQ}_\lambda)$. Then there exists an integer $N'$ such that
\begin{enumerate}
    \item For each finite place $\lambda$ of $\ol{\QQ}$ the representation $\rho_{\lambda}$ extends uniquely to (i.e., factors through) a homomorphism
    \[ 
        \pi_1(X_{\mc{O}_{F}[1/N'\ell]}, \bar{x}) \to G(\ol{\QQ}_{\lambda}) 
    \]
    where $\ell$ is the rational prime lying under $\lambda$. 
    \item For any two finite places $\lambda$, $\lambda'$ of $\ol{\QQ}$ lying over rational primes $\ell, \ell'$, and all primes $v$ of $\mc{O}_{F}[1/(N' \ell \ell')]$, there exists an automorphism $\tau= \tau(\lambda, \lambda', v)$ of $G_{\ol{\QQ}_{\lambda'}}$ such that $\rho_{\lambda, v}$ and $\tau(\rho_{\lambda', v})$ are companions, where $\rho_{\lambda, v}, \rho_{\lambda',v}$ are the restrictions of $\rho_{\lambda}, \rho_{\lambda'}$ to $\pi_1(X_{\kappa(v)}, \bar{x}_v)$. 
\end{enumerate}
\end{prop}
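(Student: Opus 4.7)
The plan is to establish part (2) directly via the interplay of Drinfeld's companions correspondence, the tame specialization map, and $G$-superrigidity, and to deduce the uniform $N'$ of part (1) as a by-product by bootstrapping from one fixed place $\lambda_0$. As preliminary inputs I would record: (i) the integrality of $\rho$ from \cite{esnault-groechenig:rigid, klevdal-patrikis:G-rigid} cited just before the proposition, which places each $\rho_\lambda$ in $G(\ol{\ZZ}_\lambda)$; and (ii) tameness of each $\rho_\lambda$ along the boundary $D$, after possibly enlarging $N$ (this follows from quasi-unipotence of the topological monodromy of $\rho$ around components of $D^{\mr{an}}$, implicit in the superrigidity package used throughout). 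For each individual $\lambda$, standard ramification arguments for continuous $\ell$-adic representations with compact, Zariski-dense image then furnish a finite set $S_\lambda$ of places of $F$ not above $\ell = \ell_\lambda$ such that $\rho_\lambda$ factors through $\pi_1(X_{\mc{O}_F[1/(S_\lambda \cdot \ell)]}, \bar x)$.

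The main step, which simultaneously gives uniformity of $S_\lambda$ and part (2), is as follows. Fix $\lambda_0$ and set $N'$ to contain $N$, the primes of $\mc{O}_F$ below $S_{\lambda_0}$, and the rational prime $\ell_0$ under $\lambda_0$. For any other place $\lambda$ and any closed point $v$ of $\Spec \mc{O}_F[1/(N' \ell_\lambda)]$, the restriction $\rho_{\lambda_0, v}$ is a tame $\ell_0$-adic local system on the smooth variety $X_{\kappa(v)}$ over $\kappa(v)$. Drinfeld's theorem (\cite{drinfeld:deligneconj}) produces an $\ell_\lambda$-adic companion $\rho_{v, \lambda_0 \rightsquigarrow \lambda}$ of $\rho_{\lambda_0, v}$, tame along $D_{\kappa(v)}$. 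Pulling this companion back through the tame specialization map $\pi_1^{\mr{top}}(X^{\mr{an}}, x) \to \pi_1^t(X_{\ol{\kappa(v)}}, \bar x_v)$ (justified by \cite{kerz-schmidt:tameness}) yields a representation $\Gamma \to G(\ol{\QQ}_\lambda)$ with Zariski-dense image. By $G$-superrigidity this pullback equals $\tau \circ \rho$ for some $\tau \in \Aut(G_{\ol{\QQ}_\lambda})$. Applying \cref{descentlem} to the normal inclusion $\pi_1(X_{\ol{\kappa(v)}}, \bar x_v) \subseteq \pi_1(X_{\kappa(v)}, \bar x_v)$, and using the hypothesis $F(\lambda) = F$ to ensure the Galois conjugacy classes line up, the identification propagates to the full arithmetic fundamental group of $X_{\kappa(v)}$, giving $\tau(\rho_{\lambda, v}) = \rho_{v, \lambda_0 \rightsquigarrow \lambda}$. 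In particular $\rho_\lambda$ extends across the fiber over $v$, establishing part (1) with the uniform $N'$; applying the same argument to the pair $(\lambda, \lambda')$ produces the automorphism of part (2).

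The main obstacle I expect is the propagation step: carrying the automorphism relation from $\Gamma$ (equivalently, via the \'etale-topological comparison, from $\pi_1(X_{\ol F}, \bar x)$) back to the arithmetic fundamental group $\pi_1(X_{\kappa(v)}, \bar x_v)$. This requires checking the conjugacy-class hypothesis of \cref{descentlem} on the nose, for which one must track the Galois equivariance of the companions construction and the outer action on $\pi_1(X_{\ol{\kappa(v)}}, \bar x_v)$. A secondary subtlety is upgrading Drinfeld's $\GL_n$-formulation of the companions correspondence to yield $G$-valued companions with Zariski-dense image, which I would handle by Tannakian reconstruction applied to a faithful linear representation of $G$.
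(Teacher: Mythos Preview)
Your overall architecture is the paper's: take a companion of a good-reduction local system on the special fiber, pull it back through tame specialization, and invoke $G$-superrigidity to identify it with $\tau\circ\rho$. The use of \cref{descentlem} to pass from $\pi_1(X_{\ol{\kappa(v)}})$ to $\pi_1(X_{\kappa(v)})$ is also what the paper does for part (2). But there is a genuine gap in your derivation of part (1).

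The problem is the sentence ``In particular $\rho_\lambda$ extends across the fiber over $v$.'' What your argument actually establishes is that the \emph{geometric} restriction $\rho_{\lambda,\ol\QQ}$ factors through the tame specialization $\pi_1(X_{\ol\QQ},\bar x)\twoheadrightarrow \pi_1^t(X_{\ol{\kappa(v)}},\bar x_v)$, and that the resulting representation of $\pi_1^t(X_{\ol{\kappa(v)}},\bar x_v)$ has a (unique, by \cref{descentlem}) extension to $\pi_1(X_{\kappa(v)},\bar x_v)$. But that extension lives on the fundamental group of the \emph{special fiber}, whereas $\rho_\lambda$ lives on the fundamental group of the \emph{generic fiber} $X_F$. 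There is no map between these two groups, only a common target $\pi_1(X_{\mc O_{(v)}},\bar x)$, and your \cref{descentlem} argument does not show that $\rho_\lambda$ kills $\ker\bigl(\pi_1(X_F,\bar x)\to \pi_1(X_{\mc O_{(v)}},\bar x)\bigr)$. Put differently: uniqueness of extension on $\pi_1(X_{\kappa(v)})$ is irrelevant to whether the arithmetic sheaf on $X_F$ spreads out over $v$; you have written $\rho_{\lambda,v}$ as if it already exists, which is precisely what is to be proven.

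The paper closes this gap with an additional, nontrivial step. Having shown $\rho_{\lambda,\ol\QQ}$ factors through specialization, it takes an arbitrary horizontal section $\varphi\colon\Spec(\mc O_L[1/S_\varphi])\to X'$ through the bad fiber and observes that the specialization map $\mr{sp}_\varphi$ is equivariant for the inertia group $I_{\bar w}$ at a place $w$ over $v$ (as in \cite[Theorem 1.1.3]{litt:arithmetic2}). Since the inertia action on the target is trivial and $\rho_{\lambda,\ol\QQ}$ factors through $\mr{sp}_\varphi$, one finds that $\rho_\lambda(I_{\bar w})$ centralizes the Zariski-dense image of $\rho_{\lambda,\ol\QQ}$ and so is trivial. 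Thus $\rho_\lambda$ is unramified along \emph{every} such horizontal curve, and Drinfeld's extension criterion \cite[Corollary 5.2]{drinfeld:deligneconj} (existence of a bad point with a bad tangent direction if the sheaf fails to extend) gives the contradiction. You should incorporate this horizontal-section argument; the ``propagation'' obstacle you flagged is real, but it is not the one you described, and \cref{descentlem} alone cannot resolve it.
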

\begin{proof}
Let $R \colon G_{\ol{\QQ}} \to \GL_{n, \ol{\QQ}}$ be a faithful irreducible representation. For each finite place $\lambda$ of $\ol{\QQ}$, the composition $R \circ \rho_{\lambda}$ is a geometrically irreducible rank $n$ local system on $X_F$, and hence by \cite[Proposition 6.1]{petrov:derham} it (and consequently $\rho_\lambda$) factors uniquely through $\pi_1(X_{\mc{O}_F[1/N_\lambda]},\bar{x})$ for some integer $N_\lambda$ depending on $\lambda$, where we may assume the rational prime below $\lambda$ divides each $N_{\lambda}$. 
The content of (1) is that $N_\lambda$ can be chosen independently of $\lambda$; we will show that $N' = \gcd_\lambda(N_\lambda)$ works (indexed over all finite places of $\ol{\QQ}$). 

Fix a finite place $\lambda$ of $\ol{\QQ}$ lying over a rational prime $\ell$. We suppose that $p$ is a prime that does not divide $N' \ell$ but does divide $N_\lambda$. 

Let $v$ be a place of $F$ above $p$.
Since $p \nmid N'$, there exists a place $\lambda'$ of $\ol{\QQ}$ such that $p \nmid N_{\lambda'}$ (so $\lambda' \nmid p$). We may then define $\rho_{\lambda', v}$ as the restriction $\rho_{\lambda'}|_{\pi_1(X_{\kappa(v)}, \bar{x}_v)}$, and $\rho_{\lambda', \bar{v}}$ as its further restriction to $\pi_1(X_{\ol{\kappa(v)}}, \bar{x}_v)$. 

The tame specialization map $\mr{sp}$ is induced by pullback on finite \'etale (tame) covers  
\begin{equation}\label{eqn:tame-spec}
\xymatrix{
\pi_1(X_{\ol{\QQ}}, \bar{x}) \ar@/^2pc/[rr]^{\mr{sp}} \ar@{->>}[r] & \pi_1(X_{\mc{O}^{\mr{sh}}_{(v)}}, \bar{x}) & \pi_1^t(X_{\ol{\kappa(v)}}, \bar{x}_v) \ar[l]_{\sim},
}
\end{equation}
where $\mc{O}_{(v)}^{\mr{sh}}$ is the strict Henselization of the local ring $\mc{O}_{(v)}:= \mc{O}_{F, v}$ (the localization, not the completion).\footnote{Note that $\mr{F\acute{E}t}^t(X_{\mc{O}_{(v)}^{\mr{sh}}})=\mr{F\acute{E}t}(X_{\mc{O}_{(v)}^{\mr{sh}}})$ since $\mc{O}_{(v)}$ has (generic) characteristic zero, and the components of $D$ are flat. Specialization maps are discussed in \cite[Expos\'e X]{sga1}, and the surjective tame specialization is discussed \cite[Corollary A.12]{lieblich-olsson-pi1} for schemes smooth over complete DVRs with separably closed residue fields. In the case that we use, existence of the (surjective) tame specialization map is well-known to experts, and an exposition in this setting can be found in the initial arXiv version \cite[\S 2]{klevdal-patrikis:SVcompatiblearXiv} of this paper.} Since $\rho_{\lambda'}$ factors through $\pi_1(X_{\mc{O}_{(v)}}, \bar{x})$, we deduce that the image of $\rho_{\lambda',\bar{v}}$ (and hence of $\rho_{\lambda',v}$) is Zariski-dense in $G$, from the corresponding property for $\rho_{\lambda', \ol{\QQ}}$. 

By \cite{drinfeld:pross}, $\rho_{\lambda', v}$ has a $\lambda$-companion (in fact, the combination of \cite{drinfeld:deligneconj} and \cite{chin:indl} suffices without the delicate arguments of \cite{drinfeld:pross} that handle non-connected monodromy groups), which we denote by $\rho_{\lambda' \leadsto \lambda, v}$. Since $\rho_{\lambda', v}$ is tame, the companion $\rho_{\lambda' \leadsto \lambda, v}$ is also tame, by combining \cite[Proposition 4.2]{kerz-schmidt:tameness} with  (\cite[Th\'eor\`eme 9.8]{deligne:constantes}) as in \cite[Theorem 1.1]{esnault-groechenig:rigid}.

The restriction $\rho_{\lambda' \leadsto \lambda, \bar{v}} \colon \pi_1^t(X_{\ol{\kappa(v)}}, \bar{x}_v) \to G(\ol{\QQ}_{\lambda})$ can then be pulled back along the tame specialization map to a homomorphism
\[
\rho_{\lambda' \leadsto \lambda, \bar{v}}^{\mr{top}} \colon \Gamma \to \pi_1(X_{\CC}, \bar{x}) \xrightarrow{\sim} \pi_1(X_{\ol{\QQ}}, \bar{x}) \xrightarrow{\mr{sp}} \pi_1^t(X_{\ol{\kappa(v)}}, \bar{x}_v) \xrightarrow{\rho_{\lambda' \leadsto \lambda, \bar{v}}} G(\ol{\QQ}_{\lambda}),
\]
which also has Zariski-dense image (this reduces to the fact that $\rho_{\lambda' \leadsto \lambda, \bar{v}}$ has Zariski-dense image, which follows for $\rho_{\lambda' \leadsto \lambda, v}$ by \cite{drinfeld:pross} and for $\rho_{\lambda' \leadsto \lambda, \bar{v}}$ because the Zariski-closure of its image is a normal subgroup $H \subset G$, hence a product of simple factors of $G$ since $G$ is adjoint, with the subgroup generated by a single Frobenius element dense in $G/H$; this forces $H=G$). Since $\Gamma$ is $G$-superrigid, $\rho_{\lambda' \leadsto \lambda, \bar{v}}^{\mr{top}}= \tau(\rho)$ for some $\tau \in \Aut(G_{\ol{\QQ}_{\lambda}})$ (here we abusively also denote by $\rho$ the composite $\Gamma \xrightarrow{\rho} G(\mc{O}_L) \subset G(\ol{\QQ}_{\lambda})$). We deduce (for any $v \vert p$) that $\rho_{\lambda, \ol{\QQ}}$ also factors through $\pi_1(X_{\ol{\QQ}}, \bar{x}) \twoheadrightarrow \pi_1^t(X_{\ol{\kappa(v)}}, \bar{x}_v)$, since $\rho= \tau^{-1}(\rho^{\mr{top}}_{\lambda' \leadsto \lambda, \bar{v}})$ does. 

Let $X'=X_{\mc{O}_F[1/N_{\lambda}^{(p)}]}$, where $N_{\lambda}^{(p)}$ denotes the prime-to-$p$ part of $N_{\lambda}$. 
We now show $\rho_{\lambda}$ extends to $X'$ (using a variant of the way \cite[Proposition 6.1]{petrov:derham} exploits geometric irreducibility). Consider any morphism $\varphi \colon \Spec(\mc{O}_L[1/S_{\varphi}]) \to X'$, for $L$ a number field and $S_{\varphi}$ a finite set of places of $L$ disjoint from those above $p$, such that $\varphi^{-1}(X_{\mc{O}_F[1/N_{\lambda}]}) \neq \emptyset$. For $w \vert p$ a place of $L$ and a choice of place $\ol{w} \vert w$ of $\ol{\QQ}$ (hence of $\mc{O}_{L, (w)}^{\mr{sh}} \subset \ol{\QQ}$), $\varphi$ induces generic and special fiber base points and a specialization homomorphism $\mr{sp}_{\varphi} \colon \pi_1(X_{\ol{\QQ}}, \bar{\varphi}) \twoheadrightarrow \pi_1^t(X_{\ol{\kappa(w)}}, \bar{\varphi}_w)$. We write $\rho_{\lambda, \varphi}$ for the representation of $\pi_1(X_{\mc{O}_F[1/N_{\lambda}]}, \bar{\varphi})$ induced (uniquely up to inner automorphism) by $\rho_{\lambda}$ and $\varphi$. Then we claim $\rho_{\lambda, \varphi}(I_{\bar{w}})=1$, where $I_{\bar{w}}$ is the image using the section $\varphi$. The key point is that $\mr{sp}_{\varphi}$ is $I_{\bar{w}}$-equivariant (as noted in the proof of \cite[Theorem 1.1.3]{litt:arithmetic2} for the analogous case of prime-to-$p$ specialization). The action on the target is trivial, and since $\rho_{\lambda, \varphi}|_{X_{\ol{\QQ}}}$ factors through $\mr{sp}_{\varphi}$ (just as previously for $\bar{x}$), we conclude that $\rho_{\lambda, \varphi}(I_{\bar{w}})$ commutes with $\rho_{\lambda, \varphi}(\pi_1(X_{\ol{\QQ}}, \bar{\varphi}))$. The latter is Zariski-dense in $G$, so $\rho_{\lambda, \varphi}(I_{\bar{w}})=1$. More intrinsically, this tells us that the lisse sheaf $\mc{F}$ associated to $\rho_{\lambda, \varphi}$ (in a fixed faithful representation of $G$), hence also associated to $\rho_{\lambda}$, has pullback $\varphi^*(\mc{F})$ that is unramified at $w \in \Spec(\mc{O}_L[1/S_{\varphi}])$ (this conclusion is independent of the variation of base points).

To finish the proof, we use \cite[Corollary 5.2]{drinfeld:deligneconj}. Suppose that $\rho_{\lambda}$ does not extend to $X'$. By \textit{loc. cit.}, there is a closed point $z \in X' \setminus X= \sqcup_{v \vert p} X_{\kappa(v)}$ and a rank 1 subspace $V \subset T_z(X')$ satisfying: 

$(\star)$ For any $\varphi$ as above such that $\varphi(w)=z$ and the image of the tangent map 
\[
T_w(\Spec(\mc{O}_{L}[1/S_{\varphi}]) \to T_z(X')
\]
is $V \otimes_{\kappa(z)} \kappa(w)$, $\varphi^*(\mc{F})$ (a lisse sheaf on $\varphi^{-1}(X_{\mc{O}_F[1/N_{\lambda}]})$) is ramified at $w$.

As then in the proof of \cite[Lemma 5.3]{drinfeld:deligneconj}, an appropriate form of Hilbert irreducibility (Theorem 2.15 of \textit{loc. cit.} shows much more) shows a $\varphi$ satisfying the hypotheses of $(\star)$ exists. We have however shown that in our context any such $\varphi$ is unramified at $w$, a contradiction. We conclude that $\rho_{\lambda}$ extends to $X'$.

(2) The argument just given tells us more: $\tau(\rho^{\mr{top}}_{\lambda, \bar{v}})= \rho^{\mr{top}}_{\lambda' \leadsto \lambda, \bar{v}}$, and thus $\tau(\rho_{\lambda, \bar{v}}) = \rho_{\lambda' \leadsto \lambda, \bar{v}}$, so Lemma \ref{descentlem} implies that $\tau(\rho_{\lambda, v}) = \rho_{\lambda' \leadsto \lambda, v}$, since these are extensions, necessarily unique, of the corresponding $\pi_1(X_{\ol{\kappa(v)}}, \bar{x}_v)$ representations. 
\end{proof}

\begin{rmk} In our Shimura variety application we will have a superrigid $\rho$ for which the hypothesis that $F(\lambda)$ is independent of $\lambda$ follows from the theory of canonical models. We expect that $F(\lambda)$ may be taken independently of $\lambda$ whenever $\Gamma$ is $G$-superrigid. In fact, the arithmetic descent preceding Proposition \ref{nearcompatible} works when $\rho$ is any $G$-cohomologically rigid local system with quasi-unipotent monodromy local monodromy and (for simplicity) Zariski-dense image (handling the local monodromy as in \cite{esnault-groechenig:rigid}), and if the fields $F(\lambda)$ obtained for each of the finitely many local systems satisfying the same hypotheses as $\rho$ are independent of $\lambda$, then the proof of part (1) of \ref{nearcompatible} works.
\end{rmk}

\begin{rmk}\label{easycompanion}
Since $G$ is connected reductive and all $\rho_{\lambda, v}$ have Zariski-dense image, the companion property is in fact equivalent to: for a density one set of closed points $y \in |X_{\kappa(v)}|$, the semisimple parts of $\rho_{\lambda, v}(F_y)$ and $\tau(\rho_{\lambda', v})(F_y)$ are conjugate to elements of $G(\ol{\QQ})$, and then define the same $G(\ol{\QQ})$-conjugacy class. (For this equivalence, which is in general false without the restriction on the monodromy groups, see \cite[Proposition 6.4]{bhkt:fnfieldpotaut}---strictly speaking, we must generalize \textit{loc. cit.}, which treats the curve case, to the higher-dimensional case using the \v{C}ebotarev density theorem for $X_{\kappa(v)}$.)
\end{rmk}

\begin{rmk}
Unfortunately, this argument does not seem to yield even a single $v$ such that $\rho_{\lambda, v}$ and $\rho_{\lambda', v}$ are companions. It does however show that (in the setup of part (2)), fixing $\lambda$ and $\lambda'$, we can partition the places of $\mc{O}_{F}[1/N'\ell \ell']$ into sets $\{\Delta_\tau\}_{\tau \in \Out(G^{\mr{ad}})}$ such that for $v \in \Delta_\tau$, $\rho_{\lambda, v}$ and $\tau(\rho_{\lambda', v})$ are companions. Since $\Out(G)$ is finite, some set $\Delta_{\tau}$ has positive upper-density. If we knew that $\rho_{\lambda}$ had \textit{some} $\lambda'$-companion $\rho_{\ltolp}$, we could in fact deduce that $\rho_{\ltolp} \cong \tau(\rho_{\lambda'})$ ($\cong$ here denotes $G$-conjugacy). Indeed, this follows from a suitable generalization of the results of \cite{rajan:sm1} (making crucial use in the application of Zariski-density of the images). Of course, we expect this to hold with $\tau=\mr{id}$!
\end{rmk}

In the application to Shimura varieties, we will control the automorphism $\tau$ using our additional knowledge that $\rho_{\lambda}$ and $\rho_{\lambda'}$ are compatible when restricted to certain horizontal sections of $X$ provided by the special points. In the remainder of this section, we abstract the relevant argument.

\begin{assumption}\label{horizontal}
Continue with the setting of Proposition \ref{nearcompatible}. Let $\lambda, \lambda'$ be finite places of $\ol{\QQ}$ lying over rational primes $\ell, \ell'$. Fix a prime $v$ of $\mc{O}_{F}[1/(N' \ell \ell')]$, and suppose there is a point $y \in X(\mc{O}_{C_y}[1/N_y])$, where $C_y/F$ is a finite extension, and $N_y$ is some multiple of $N'$ that is not divisible by $v$, and the following holds:
\begin{enumerate}
    \item The specializations $\rho_{\lambda, y} \colon \pi_1(C_y) \to G(\ol{\QQ}_{\lambda})$ and $\rho_{\lambda', y} \colon \pi_1(C_y) \to G(\ol{\QQ}_{\lambda'})$ are compatible at any place $v_y$ of $C_y$ above $v$. Since by construction they are unramified at such $v_y$, this condition means that $\rho_{\lambda, y}(\Frob_{v_y})$ and $\rho_{\lambda', y}(\Frob_{v_y})$ have semisimple parts $t_{\lambda, v_y}$ and $t_{\lambda', v_y}$ whose conjugacy classes are defined over $\ol{\QQ}$ and in $G(\ol{\QQ})$ define the same conjugacy class. We denote by $[t_{v_y}] \in [G \git G](\ol{\QQ})$ this common underlying semisimple conjugacy class, a closed point of the GIT quotient $[G \git G]$.
    \item The element $t_{\lambda, v_y} \in G(\ol{\QQ}_{\lambda})$ is not fixed by any automorphism of $G_{\ol{\QQ}_{\lambda}}$ with non-trivial image in the outer automorphism group $\Out(G_{\ol{\QQ}_{\lambda}})$. \qed
\end{enumerate}

\end{assumption}

\begin{prop}\label{symbreak}
For each prime $v$ of $\mc{O}_{F}[1/(N'\ell \ell')]$ such that Assumption \ref{horizontal} holds for $v$, the representations $\rho_{\lambda, v}$ and $\rho_{\lambda', v}$ of $\pi_1(X_{\kappa(v)}, \bar{x}_v)$ are companions.
\end{prop}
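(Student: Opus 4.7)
The plan is to show that the automorphism $\tau = \tau(\lambda, \lambda', v) \in \Aut(G_{\ol{\QQ}_{\lambda'}})$ supplied by Proposition~\ref{nearcompatible}(2)---for which $\rho_{\lambda,v}$ and $\tau(\rho_{\lambda',v})$ are companions---is inner. Once this is established, $\tau(\rho_{\lambda',v})$ is $G(\ol{\QQ}_{\lambda'})$-conjugate to $\rho_{\lambda',v}$, so the companion relation for the pair $\bigl(\rho_{\lambda,v}, \tau(\rho_{\lambda',v})\bigr)$ descends to one for $\bigl(\rho_{\lambda,v}, \rho_{\lambda',v}\bigr)$, as required.

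The strategy for showing $\tau$ is inner is to evaluate the companion relation at the single closed point of $X_{\kappa(v)}$ cut out by the section $y$ and to compare the result with the ``horizontal'' compatibility provided by Assumption~\ref{horizontal}(1). Fix a place $v_y$ of $C_y$ above $v$. Since $v \nmid N_y$, the map $y \colon \Spec(\mc{O}_{C_y}[1/N_y]) \to X$ specializes at $v_y$ to a closed point $y_v \in X_{\kappa(v)}$ with $\kappa(y_v) = \kappa(v_y)$, and the induced map on fundamental groups identifies $\Frob_{v_y}$ with $\Frob_{y_v}$ up to conjugation. Hence, up to $G$-conjugation,
\[
\rho_{\lambda, v}(\Frob_{y_v}) = \rho_{\lambda, y}(\Frob_{v_y}) \quad\text{and}\quad \rho_{\lambda', v}(\Frob_{y_v}) = \rho_{\lambda', y}(\Frob_{v_y}).
\]
Evaluating the companion relation between $\rho_{\lambda,v}$ and $\tau(\rho_{\lambda',v})$ at $y_v$ and substituting yields
\[
\bigl[\rho_{\lambda, y}(\Frob_{v_y})^{\mr{ss}}\bigr] \;=\; \tau_\ast\bigl[\rho_{\lambda', y}(\Frob_{v_y})^{\mr{ss}}\bigr]
\]
in $[G\git G](\ol{\QQ})$, where $\tau_\ast$ denotes the induced action on the GIT quotient. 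Since inner automorphisms act trivially on $[G\git G]$, this action depends only on the image of $\tau$ in $\Out(G)$ and is defined over $\QQ$.

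Now Assumption~\ref{horizontal}(1) asserts that both sides of the displayed equality are equal to the common class $[t_{v_y}] \in [G\git G](\ol{\QQ})$, so $\tau_\ast$ fixes $[t_{v_y}]$. Via the canonical identification $\Out(G_{\ol{\QQ}_{\lambda'}}) \cong \Out(G_{\ol{\QQ}_\lambda})$ (both are the $\ol{\QQ}$-points of the finite \'etale $\QQ$-group scheme $\Out(G)$), the image of $\tau$ in $\Out(G_{\ol{\QQ}_\lambda})$ therefore stabilizes the conjugacy class of $t_{\lambda, v_y}$; picking an appropriate inner twist, some representative in $\Aut(G_{\ol{\QQ}_\lambda})$ fixes $t_{\lambda, v_y}$ itself. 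By Assumption~\ref{horizontal}(2), this outer class is trivial, and $\tau$ is inner, completing the proof.

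The argument above is essentially formal once Assumption~\ref{horizontal} is in hand: it is a one-step manipulation using Remark~\ref{easycompanion} (or the defining property of companions) at the single point $y_v$. The genuine obstacle, therefore, lies not in this proposition but in the work of the following section, where one must produce---for the Shimura variety of interest---special points $y$ and places $v_y$ whose Frobenius conjugacy class $[t_{v_y}] \in [G\git G](\ol{\QQ})$ enjoys the outer-rigidity property~(2).
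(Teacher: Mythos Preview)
Your proof is correct and follows essentially the same approach as the paper's own proof: both evaluate the companion relation from Proposition~\ref{nearcompatible}(2) at the closed point coming from the section $y$, use Assumption~\ref{horizontal}(1) to conclude that the outer class of $\tau$ fixes $[t_{v_y}]$ in $[G\git G]$, lift this to an automorphism fixing $t_{\lambda,v_y}$ by an inner twist, and then invoke Assumption~\ref{horizontal}(2). The paper is marginally more explicit in writing out the conjugations needed to pass from ``$\bar\tau$ fixes $[t_{v_y}]$'' to ``some lift of $\bar\tau$ fixes $t_{\lambda,v_y}$,'' but your phrase ``picking an appropriate inner twist'' covers exactly that step.
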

\begin{proof}
By Proposition \ref{nearcompatible} there exists an automorphism $\tau$ of $G_{\ol{\QQ}_{\lambda'}}$ such that $\rho_{\lambda, v}$ and $\tau(\rho_{\lambda',v})$ are companions. The action of $\Aut(G)$ on $G$ factors uniquely through an action of $\Out(G)$ on $[G \git G]$ such that for any algebraically closed field $\Omega$, $\sigma \in \Aut(G_\Omega)$, and $g \in G(\Omega)$ we have $[\sigma g] = \bar{\sigma}[g]$ for $\bar{\sigma}$ the image of $\sigma$ in $\Out(G_\Omega)$. Combining this with part (1) of Assumption \ref{horizontal}, we see 
\begin{equation}\label{companioneqtn}
    [t_{v_y}] = [\rho_{\lambda, v}(\Frob_{v_y})] = \bar{\tau}[\rho_{\lambda',v}(\Frob_{v_y})] = \bar{\tau}[t_{v_y}],
\end{equation}
for $\bar{\tau}$ the image of $\tau$ in $\mr{Out}(G_{\ol{\QQ}_{\lambda'}}) \cong \mr{\Out}(G_{\ol{\QQ}})$. 

Let $\tilde{\tau} \in \Aut(G_{\ol{\QQ}})$ induce $\bar{\tau}$, and let $t_{v_y} \in G(\ol{\QQ})$ be a semisimple element in the fiber of $G \to [G \git G]$ over $[t_{v_y}]$; $t_{v_y}$ is well-defined up to $G(\ol{\QQ})$-conjugation. Since $\tilde{\tau}(t_{v_y})$ and $t_{v_y}$ are both semisimple and lie over $[t_{v_y}]$ (Equation \eqref{companioneqtn}), there exists $g \in G(\ol{\QQ})$ such that $(\inn(g)\circ\tilde{\tau})(t_{v_y})=t_{v_y}$. Now by construction $t_{\lambda, v_y}= \inn(h_{\lambda})t_{v_y}$ for some $h_{\lambda} \in G(\ol{\QQ}_{\lambda})$, so $t_{\lambda, v_y}$ is then fixed by the automorphism $\inn(h_{\lambda}g)\tilde{\tau} \inn(h_{\lambda}^{-1})$ of $G_{\ol{\QQ}_{\lambda}}$. Thus by part (2) of Assumption \ref{horizontal}, the image of $\tilde{\tau}$ in $\Out(G_{\ol{\QQ}})(\ol{\QQ}) \xrightarrow{\sim} \Out(G_{\ol{\QQ}_{\lambda}})(\ol{\QQ}_{\lambda})$ is trivial.

\end{proof}

\section{The case of Shimura varieties}\label{svsection}
\subsection{Canonical models and local systems}\label{localsysdef}
Let $(G, X)$ be a Shimura datum, For any neat compact open subgroup $K \subset G(\A_f)$, we write $\Sh_K(G,X)$ for the complex Shimura variety with $\Sh_K(G, X)(\CC)= G(\QQ) \backslash (X \times G(\A_f))/K$ and $\Sh(G,X) = \varprojlim_K \Sh_K(G,X)$ for the limit over all compact open subgroups. We assume for simplicity that $Z_G(\QQ)$ is discrete in $Z_G(\A_f)$; this does not entail any significant loss of generality for the study of Galois representations, as it is automatic if $G$ is the generic Mumford-Tate group on $X$: see \cite[Appendix]{ullmo-yafaev:generalized}. It then follows that $\Sh(G, X)(\CC)$ identifies with $G(\QQ) \backslash X \times G(\A_f)$; we denote the element represented by an ordered pair $(x, a) \in X \times G(\A_f)$ by $[x, a]$, and we let $[x, a]_K$ denote its image in $\Sh_K(G, X)(\CC)= G(\QQ) \backslash (X \times G(\A_f))/K$. 

Let $E=E(G, X)$ be the reflex field of $(G, X)$. The theory of canonical models (\cite{deligne:canonicalmodels}, \cite{milne:canonical}) produces an inverse system $(\Sh_K)_K$, indexed over (neat) compact open subgroups of $G(\A_f)$, and equipped with right $G(\A_f)$-action, of smooth quasi-projective varieties over $E$ such that: (1) $(\Sh_K(\CC))_K$ with its $G(\A_f)$-action is isomorphic to $(\Sh_K(G, X)(\CC))_K$ with its $G(\A_f)$-action in such a way that (2) for every special pair $(T_x, x) \subset (G, X)$ and $a \in G(\A_f)$, $[x, a]_K \in \Sh_K(E(x)^{\mr{ab}})$ and
\begin{equation}\label{reciprocity}
\sigma[x, a]_K= [x, r_x(\alpha)a]_K
\end{equation}
whenever the reciprocity map $\rec_{E(x)} \colon \A_{E(x)}^\times \to \Gamma_{E(x)}^{\mr{ab}}$ satisfies $\rec_{E(x)}(\alpha)= \sigma$. Let us explain the notation: 
\begin{itemize}
\item By definition $x$ is a homomorphism $h_x \colon \mathbb{S} \to G_{\RR}$ factoring through $T_{x, \RR} \subset G_{\RR}$ for some $\QQ$-torus $T_x \subset G$. 
\item Restriction along $\mathbb{G}_m \xrightarrow{z \mapsto (z, 1)} \mathbb{S}_{\CC}$ of $h_{x, \CC}$ yields the associated cocharacter $\mu_x \in X_{\bullet}(T_x)$ over $\CC$, whose field of definition is the number field $E(x) \subset \CC$. 
\item The homomorphism $r_x$ is as in \cite[Definition 12.8]{milne:shimuraintro}: we take the canonical map $\Res_{E(x)/\QQ} \mathbb{G}_m \xrightarrow{\Res_{E(x)/\QQ}(\mu_x)} \Res_{E(x)/\QQ} (T_x \otimes E(x)) \xrightarrow{N_{E(x)/\QQ}} T_x$ evaluated on $\A$-points and then project to $T_x(\A_f)$ to define $r_x \colon \A_{E(x)}^\times \to T_x(\A_f)$. 
\end{itemize}
Note that for any finite extension $E'/E(x)$, we can regard $\mu_x$ as defined over $E'$, and then in the same way we can define a homomorphism $r_{x, E'} \colon \mathbb{A}^\times_{E'} \to T_x(\A_f)$; \cite[Equation 2.2.2.1]{deligne:canonicalmodels} records the compatibility $r_{x, E'}= r_{x} \circ N_{E'/E(x)}$. For $\sigma \in \Gamma_{E'}$ we then see (using norm-compatibility of the global reciprocity maps) that Equation \eqref{reciprocity} holds taking $r_{x, E'}$ in place of $r_x$ and taking $\alpha' \in \A_{E'}^\times$ satisfying $\rec_{E'}(\alpha')= \sigma$ in place of $\alpha$. 
\begin{convention}\label{reclazy}
We will eventually use Equation \eqref{reciprocity} in the more flexible form just described, and we will allow ourselves the notational abuse of writing $r_x(\alpha')$ for $r_{x, E'}(\alpha')$ when $\alpha' \in \mathbb{A}_{E'}^{\times}$ (see e.g. the statement of Lemma \ref{specialgalois}).
\end{convention}

Now we fix a neat compact open subgroup $K_0 \subset G(\A_f)$. Following \cite[\S 4]{cadoret-kret:galois-generic}, we recall the construction of the canonical $K_0$-valued local systems on geometrically-connected components of $\Sh_{K_0}$. Fix a point $s \in \Sh(\CC)$, with image $sK \in \Sh_K(\CC)$ for each $K$, and let $S_{K, s}$ be the geometrically connected component of $\Sh_{K}$ containing $sK$; it is defined over some finite abelian extension $E_{K, s}$ of $E$. In particular, for any open normal subgroup $K \subset K_0$, we have the following Cartesian squares of (non-connected) Galois covers with group $K_0/K$ (acting on the right):
\[
\xymatrix{
\Sh_K \ar[d]^{K_0/K} & \Sh_K \otimes_E E_{K_0, s} \ar[l] \ar[d]^{K_0/K} \ar[l] & \Sh_K \times_{\Sh_{K_0}} S_{K_0, s} \ar@{_{(}->}[l] \ar[d]^{K_0/K} & \widetilde{S}_{K, s} \ar[ld] \ar@{_{(}->}[l] \\
\Sh_{K_0} & \Sh_{K_0} \otimes_E E_{K_0, s} \ar[l] & S_{K_0, s} \ar@{_{(}->}[l]& \\
}
\]
where $\widetilde{S}_{K, s}$ is by definition the connected component of $\Sh_K \otimes_E E_{K_0, s}$ containing $sK$. The fact that the vertical maps are Galois covers with group $K_0/K$ uses the assumptions that $K_0$ is neat and that $Z_G(\QQ)$ is discrete in $Z_G(\A_f)$: see \cite[Lemma 2.1]{ullmo-yafaev:generalized}. 
To simplify notation, in our fundamental group notation we will simply write $\bar{s}$ for any of these geometric points $sK$. Thus from the connected pointed Galois cover $(\widetilde{S}_{K, s}, \bar{s}) \to (S_{K_0, s}, \bar{s})$ we obtain a canonical surjection
\[
\pi_1(S_{K_0, s}, \bar{s}) \twoheadrightarrow \Aut(\widetilde{S}_{K, s}/S_{K_0, s})^{\mr{op}} \subset K_0/K.
\] 
Let us recall precisely how this works, since we will want to compute the restriction to special points of these homomorphisms. Let $F_{\bar{s}} \colon \mr{F\acute{E}t}_{S_{K_0, s}} \to \mr{Sets}$ be the fiber functor at $\bar{s}$ on the category of finite \'{e}tale covers of $S_{K_0, s}$, so $\pi_1(S_{K_0, s}, \bar{s})$ is by definition the profinite group $\Aut(F_{\bar{s}})$. Once we choose representatives, indexed by a poset $I$, of each isomorphism class of connected Galois covers $(S_i \to S_{K_0, s})_{i \in I}$ and fix elements $\bar{s}_i \in F_{\bar{s}}(S_i)$, for all $i \geq i'$ there is a unique morphism $f_{i i'} \colon S_i \to S_{i'}$ over $S_{K_0, s}$ carrying $\bar{s}_i$ to $\bar{s}_{i'}$. This data induces an isomorphism of functors
\[
F_{\bar{s}}(\bullet) \xrightarrow{\sim} \mr{colim}_I \Hom_{\mr{F\acute{E}t}_{S_{K_0, s}}}(S_i, \bullet),
\]
and $A= \mr{lim}_I \Aut(S_i/S_{K_0, s})$ (automorphisms acting on the left of each $S_i$) canonically acts on the right of this colimit, hence we obtain a homomorphism $A^{\mr{op}} \to \Aut(F_{\bar{s}})$, which is an isomorphism. In particular in our setting we take the covers $\widetilde{S}_{K, s}$ among the $S_i$ and always for these choose the base-points $\gamma_i= \bar{s}K$, and we obtain the surjection $\pi_1(S_{K_0, s}, \bar{s}) \to \Aut(\widetilde{S}_{K, s}/S_{K_0, s})^{\mr{op}}$, which via the above diagram is canonically included in $K_0/K$.

Taking the limit over all such $K \subset K_0$, we obtain the representation
\[
\rho_{K_0, s} \colon \pi_1(S_{K_0, s}, \bar{s}) \to K_0
\]
and its projection $\rho_{K_0, s, \ell}$ to the image $K_{0, \ell}$ of $K_0$ in $G(\QQ_{\ell})$.

We now describe how the fundamental property \eqref{reciprocity} of canonical models determines the specialization of these local systems to special points. Let $s=[x, a] \in \Sh(\CC)$ be a special point in the sense that the Mumford-Tate group of the Hodge structure $h_x \colon \mathbb{S} \to G_{\RR}$ corresponding to $x \in X$ is a torus $T_x$. The image $sK_0 \in S_{K_0, s}$ is defined over an abelian extension $E(sK_0)$ of $E(x)$ inside $\overline{\QQ}$, and we write $\bar{s}$ for either of the associated $\ol{\QQ}$-valued or $\CC$-valued geometric points. The next lemma follows from the definition of $\rho_{K_0, s}$ given above:
\begin{lem}\label{specialgalois}
Let $s=[x, a] \in \Sh(\CC)$ be a special point as above. Then the restriction of $\rho_{K_0, s}$ along the specialization $\pi_1(\Spec(E(sK_0)), \bar{s}) \to \pi_1(S_{K_0, s}, \bar{s})$ maps $\sigma \in \Gamma_{E(sK_0)}^{\mr{ab}}$ to the unique $k_0(\sigma) \in K_0$ such that there exists $q \in G(\QQ)_+$ satisfying $qx=x$ and $qak_0(\sigma)=r_x(\alpha)a$, where again $\alpha \in \mathbb{A}_{E(sK_0)}^\times$ is any idele satisfying $\rec_{E(sK_0)}(\alpha)= \sigma$ (and we observe Convention \ref{reclazy}).
\end{lem}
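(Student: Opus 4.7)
The plan is to unwind the construction of $\rho_{K_0, s}$ recalled just before the lemma statement and apply the canonical-model reciprocity law \eqref{reciprocity} to the basepoint $\bar{s} = [x, a]$. The first observation is that, under the identification at each finite level $K$ of the deck group of $\widetilde{S}_{K, s} \to S_{K_0, s}$ with a subgroup of $K_0/K$ via the right $G(\A_f)$-action on $\mr{Sh}_K$, the element $\rho_{K_0, s}(\sigma) \in K_0$ is characterized as the unique element whose reduction modulo $K$ implements the action of $\sigma$ on the basepoint by right translation:
\[
\sigma \cdot [x, a]_K = [x, a \cdot \rho_{K_0, s}(\sigma)]_K \quad \text{in } \mr{Sh}_K.
\]

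Next, the canonical-model reciprocity property \eqref{reciprocity} (extended to $E(sK_0)$ via Convention \ref{reclazy}) computes the same action: for $\sigma \in \Gamma_{E(sK_0)}^{\mr{ab}}$ and any $\alpha \in \mathbb{A}_{E(sK_0)}^{\times}$ with $\mr{rec}_{E(sK_0)}(\alpha) = \sigma$, one has $\sigma \cdot [x, a]_K = [x, r_x(\alpha) a]_K$. Equating the two computations yields $[x, a \cdot \rho_{K_0, s}(\sigma)]_K = [x, r_x(\alpha) a]_K$ in $\mr{Sh}_K$, which, unwinding the double quotient $\mr{Sh}_K(\CC) = G(\QQ) \backslash (X \times G(\A_f))/K$, produces $q_K \in G(\QQ)$ and $\kappa_K \in K$ with $q_K x = x$ and $q_K\, a\, \rho_{K_0, s}(\sigma)\, \kappa_K = r_x(\alpha) a$. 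Neatness of $K_0$ together with discreteness of $Z_G(\QQ) \subset Z_G(\A_f)$ implies the $G(\QQ)$-action on $X \times G(\A_f)/K_0$ is free (cf.\ \cite[Lemma 2.1]{ullmo-yafaev:generalized}), so $q_K$ is uniquely determined; the $q_K$ are therefore compatible under refinement of $K$ and assemble into a single $q \in G(\QQ)$ satisfying the exact equality $q\, a\, \rho_{K_0, s}(\sigma) = r_x(\alpha) a$ in $G(\A_f)$.

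Finally, $q \in G(\QQ)_+$ because $x$ lies in the connected component $X^+$ that defines $S_{K_0, s}$, and $qx = x \in X^+$ forces $q$ to preserve $X^+$. Uniqueness of $k_0(\sigma) := \rho_{K_0, s}(\sigma)$ given the equation is the same freeness statement applied once more. The main (and essentially only) technical subtlety is aligning the left/right conventions for the deck-group action on $\widetilde{S}_{K, s}$ with the right $G(\A_f)$-action used to realize the inclusion $\mr{Aut}(\widetilde{S}_{K, s}/S_{K_0, s})^{\mr{op}} \subset K_0/K$, and ensuring the $q_K$ are coherent across levels; once this is straight, the asserted formula is a direct translation into double-coset language of the two descriptions of $\sigma \cdot [x, a]_K$.
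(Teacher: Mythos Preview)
Your proposal is correct and follows essentially the same approach as the paper's proof: identify $\rho_{K_0,s}(\sigma)$ at each finite level via the right $G(\A_f)$-action on the basepoint, apply the reciprocity law \eqref{reciprocity} to compute $\sigma[x,a]_K=[x,r_x(\alpha)a]_K$, and equate. You supply more detail than the paper (the freeness argument to pin down $q$ uniquely and make the $q_K$ coherent across levels, and the check that $q \in G(\QQ)_+$), but the underlying argument is the same.
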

\begin{proof}
Let $K$ as above be a finite-index normal subgroup of $K_0$. Then for any for any $\sigma \in \Gamma_{E(sK_0)}^{\mr{ab}}$, $\sigma[x, a]_K= [x, a]_K k_0(\sigma)$ for a unique $k_0(\sigma) \in K_0/K$. By construction the class $k_0(\sigma) K$ is the image of $\rho_{K_0, s}(\sigma)$ modulo $K$. The lemma follows now from (see Equation \eqref{reciprocity} and the discussion following) the equality $\sigma[x,a]_K = [x, r_x(\alpha)a]_K$, where we have allowed the notational abuse of Convention \ref{reclazy}, writing $r_x$ instead of $r_x \circ N_{E(sK_0)}/E(x)= r_{x, E(sK_0)}$. 
\end{proof}

\subsection{Integral models}\label{integralmodels}
Throughout this section we additionally assume that $G$ is a connected reductive group over $\QQ$ whose adjoint group $G^{\mr{ad}}$ is $\QQ$-simple and has real rank $\mr{rk}_{\RR}(G^{\mr{ad}}_{\RR})$ at least 2. Since $(G, X)$ is a Shimura datum, there exists a totally real field $F$ and an absolutely simple adjoint group $G^s$ over $F$ such that $G^{\mr{ad}} \cong \Res_{F/\QQ}(G^s)$ (\cite[2.3.4(a)]{deligne:canonicalmodels}).  
We once and for all fix a connected component $X^+$ of $X$, and for any compact open subgroup $K \subset G(\A_f)$ we let $\mc{C}_K$ be a set of representatives in $G(\A_f)$ of the double coset space $G(\QQ)_{+} \backslash G(\A_f)/K$. Then the complex manifold $\Sh_K(G, X)(\CC)= G(\QQ) \backslash (X \times G(\A_f))/K$ is a disjoint union of locally symmetric spaces (\cite[Lemma 5.13]{milne:shimuraintro}):
\begin{equation}\label{components}
G(\QQ) \backslash (X \times G(\A_f))/K= G(\QQ)_+ \backslash (X^+ \times G(\A_f))/K= \bigsqcup_{g \in \mc{C}_K} \Gamma_g \backslash X^+, 
\end{equation}
where the map sends $x \in \Gamma_g \backslash X^+$ to $[x, g]_{K}$.

We will use Margulis's superrigidity theorems to check that the criterion of Definition \ref{superrigdef} holds for the fundamental groups of these locally symmetric spaces, namely the images of each $\Gamma_g$ in $G^{\mr{ad}}(\QQ)$.

\begin{prop}\label{svsuperrig}
Let $K$  
be a neat compact open subgroup of $G(\A_f)$, and let $\Gamma$ be the fundamental group of one of the connected components of $\Sh_K(G, X)(\CC)$. Then $\Gamma$ is $G^{\mr{ad}}$-superrigid in the sense of Definition \ref{superrigdef}.
\end{prop}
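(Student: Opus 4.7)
The plan is to invoke Margulis's superrigidity theorem on the higher-rank arithmetic lattice $\Gamma$. First, I would identify $\Gamma$ concretely: since $X^+$ is a contractible Hermitian symmetric domain and $K$ is neat (so that, given that $Z_G(\QQ)$ is discrete in $Z_G(\A_f)$, the arithmetic group $\Gamma_g$ from \eqref{components} acts freely on $X^+$ and injects into $G^{\mr{ad}}(\QQ) \cong G^s(F)$), $\Gamma$ may be identified with $\Gamma_g$, an irreducible arithmetic lattice in $G^{\mr{ad}}(\RR) \cong \prod_{v \mid \infty} G^s(F_v)$. Irreducibility is built from $\QQ$-simplicity of $G^{\mr{ad}}$, and the hypothesis $\mr{rk}_{\RR}(G^{\mr{ad}}_\RR) \geq 2$ puts us in the regime where Margulis applies. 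A standard specialization argument --- pick a finitely generated subfield $k \subset \Omega$ containing the matrix entries of $\rho_1$ and $\rho_2$ in a faithful representation of $G^{\mr{ad}}$, take its algebraic closure, and fix an embedding into $\CC$ --- reduces to $\Omega = \CC$; since $\Aut(G^{\mr{ad}})$ is a $\QQ$-algebraic group, the desired conclusion transfers back.

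Now decompose $G^{\mr{ad}}_\CC \cong \prod_{\sigma\colon F \hookrightarrow \CC} G^s_\sigma$ and study each projection $\rho_{i,\sigma} \colon \Gamma \to G^s_\sigma(\CC)$. Zariski-density of $\rho_i$ in the product forces each $\rho_{i,\sigma}$ to have Zariski-dense (hence unbounded) image in the absolutely simple adjoint $\CC$-group $G^s_\sigma$. Margulis's superrigidity (\cite{margulis:discrete}) then extends $\rho_{i,\sigma}$ to a continuous homomorphism $G^{\mr{ad}}(\RR) \to G^s_\sigma(\CC)$, which by simplicity of the target together with unboundedness of the image must factor through a single archimedean factor $G^s(F_v)$ of $G^{\mr{ad}}(\RR)$ as a $\CC$-algebraic isomorphism $\phi_{i,\sigma} \colon G^s \otimes_{F,\tau_{i,\sigma}} \CC \xrightarrow{\sim} G^s_\sigma$, where $\tau_{i,\sigma} \colon F \hookrightarrow F_v \hookrightarrow \CC$ is the resulting embedding. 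Thus $\rho_{i,\sigma}(\gamma) = \phi_{i,\sigma}(\tau_{i,\sigma}(\gamma))$ for $\gamma \in \Gamma \subset G^s(F)$. If $\tau_{i,\sigma} = \tau_{i,\sigma'}$ for some $\sigma \neq \sigma'$, the image of $\rho_i$ would lie in the graph of $\phi_{i,\sigma'} \circ \phi_{i,\sigma}^{-1} \subset G^s_\sigma \times G^s_{\sigma'}$, contradicting Zariski-density of $\rho_i$ in the two-factor projection; since the sets of embeddings and factors both have cardinality $[F:\QQ]$, $\sigma \mapsto \tau_{i,\sigma}$ is a permutation $\pi_i$.

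Finally, assembling $\pi_i$ together with the factorwise isomorphisms $\phi_{i,\sigma}$ produces an automorphism $\tau_i \in \Aut(G^{\mr{ad}}_\CC)$ carrying the tautological inclusion $\iota \colon \Gamma \hookrightarrow G^{\mr{ad}}(\CC)$ to $\rho_i$; then $\tau := \tau_1 \circ \tau_2^{-1}$ satisfies $\rho_1 = \tau(\rho_2)$, as required. The most delicate step is extracting from Margulis's theorem an algebraic (rather than compact-valued) morphism extending $\rho_{i,\sigma}$, which is where the unboundedness consequence of Zariski-density in a simple non-compact $\CC$-group is essential; the remainder is Galois-theoretic bookkeeping about the embeddings of the totally real field $F$.
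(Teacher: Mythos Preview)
Your reduction to $\Omega = \CC$ via specialization is a legitimate maneuver, and the overall strategy---decompose $G^{\mr{ad}}_\CC$ into absolutely simple factors, apply Margulis factorwise, reassemble---matches the paper's. The gap is at ``Zariski-dense (hence unbounded).''

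Zariski-density over $\CC$ does not force unboundedness: a compact real form of $G^s_\sigma(\CC)$ is already Zariski-dense. And this situation genuinely arises here. Since $F$ is totally real and $(G,X)$ is a Shimura datum, some real places $v$ of $F$ may have $G^s(F_v)$ compact; Deligne's axioms only forbid $G^{\mr{ad}}_\RR$ having a compact factor defined over $\QQ$, which for $\QQ$-simple $G^{\mr{ad}}$ just says $G^{\mr{ad}}(\RR)$ is not entirely compact. For the tautological inclusion $\iota \colon \Gamma \hookrightarrow G^{\mr{ad}}(\CC)$ and such a $\sigma \colon F \hookrightarrow \RR$, the projection $\iota_\sigma$ lands inside the compact group $G^s(F_\sigma) \subset G^s_\sigma(\CC)$ yet is Zariski-dense by Borel density. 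So for an arbitrary Zariski-dense $\rho_i$ you cannot exclude bounded projections $\rho_{i,\sigma}$, and then the Lie-group form of superrigidity you invoke---which yields a continuous extension from (the noncompact part of) $G^{\mr{ad}}(\RR)$ only under an unboundedness hypothesis---does not apply. Modding out the compact archimedean factors first does not help either: you discard precisely the factors through which a bounded $\rho_{i,\sigma}$ would have to extend.

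The paper sidesteps this by invoking the purely algebraic statement \cite[Theorem VIII.3.4(a)]{margulis:discrete}: for any Zariski-dense $\Gamma \to \mathbf{H}(l)$ with $\mathbf{H}$ absolutely simple adjoint over a characteristic-zero field $l$, there exist an extension $l'/l$ and an algebraic epimorphism $\eta \colon G^{\mr{ad}}_{l'} \to \mathbf{H}_{l'}$ restricting to the given map on $\Gamma$. No boundedness hypothesis is needed, and the output is already algebraic. The price is the passage to $l'$, which the paper then handles by a short descent argument (using Zariski-density and adjointness to pin down the conjugating element). If you retain your specialization to $\CC$ and replace your Lie-group step by VIII.3.4(a), that descent becomes vacuous: as the paper's remark notes, $l'/l$ is finite when $l=\CC$, so $l'=\CC$.
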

\begin{proof}
Note that $\Gamma$, the adjoint image of one of the congruence subgroups $\Gamma_g$ above, is an arithmetic subgroup of $G^{\mr{ad}}(\QQ)$ (\cite[]{platonov-rapinchuk}). Let $\Omega$ be any algebraically closed field of characteristic zero, and let $\rho_1, \rho_2 \colon \Gamma \to G^{\mr{ad}}(\Omega)$ be two homomorphisms with Zariski-dense image. Since $G^{\mr{ad}}(\Omega) \cong \prod_{\sigma \colon F \to \Omega} G^s_{\sigma}(\Omega)$, where $G^s_{\sigma}$ denotes the base-change to $\Omega$ along $\sigma$, we obtain for each $\sigma \colon F \to \Omega$ a pair of homomorphisms $\rho_{i, \sigma} \colon \Gamma \to G^s_{\sigma}(\Omega)$ with Zariski-dense image. We will now apply \cite[Theorem VIII.3.4(a)]{margulis:discrete}: since there is a lot of notation in the formulation of \textit{loc. cit.}, we will orient the reader with some explanation of how our setup fits into Margulis's:
\begin{itemize}
\item The field $K$ is in our case $\QQ$, the connected (almost) $K$-simple non-commutative $K$-group $\mathbf{G}$ is our $G^{\mr{ad}}$, the finite set $S$ of places of $K$ is $\{\infty\}$; since $G^{\mr{ad}}_{\RR}$ is not anisotropic, the ($S$-)arithmetic subgroup $\Gamma \subset G^{\mr{ad}}(\QQ)$ belongs to the set $\Omega_{\emptyset}$ defined at the start of \cite[\S VIII.3]{margulis:discrete}. 
\item The absolutely simple adjoint group $\mathbf{H}$ is our $G^s_{\sigma}$ (we apply the theorem separately for each choice of $\sigma$), and the field $l$ is our $\Omega$. 
\end{itemize}
Then \cite[Theorem VIII.3.4(a)]{margulis:discrete} implies that there is a field extension $\Omega' \supset \Omega$ ($\Omega'= l'$ in the notation of \textit{loc. cit.}), and for $i=1, 2$ and each $\sigma$, there is a surjection $\eta_{i, \sigma} \colon G^{\mr{ad}}_{\Omega'} \to G^s_{\sigma, \Omega'}$ of algebraic groups over $\Omega'$\footnote{In \textit{loc. cit.} the base-change to $l'=\Omega'$ requires the proof also to produce an embedding $K \to l'$, there denoted $\sigma$; of course there is no choice when $K= \QQ$. 
Also note that strictly speaking for each $\rho_{i, \sigma}$ we produce an extension $\Omega'$ of $\Omega$, but we can just take our $\Omega'$ to be the compositum in some overfield of the collection of such extensions.} such that $\rho_{i, \sigma}= \eta_{i, \sigma}|_{\Gamma}$; here we restrict $\eta_{i, \sigma}$ along $\Gamma \subset G^{\mr{ad}}(\QQ) \subset G^{\mr{ad}}(\Omega')$. Thus each $\rho_i$ is given by a product
\[
\eta_i \colon G^{\mr{ad}}_{\Omega'} \xrightarrow{\prod_{\sigma \colon F \to \Omega} \eta_{i, \sigma}} \prod_{\sigma \colon F \to \Omega} G^s_{\sigma, \Omega'} \cong G^{\mr{ad}}_{\Omega'}.
\]
Since each $\rho_i$ has Zariski-dense image in $G^{\mr{ad}}_{\Omega'}$, each $\eta_i$ must be an isomorphism. Thus $\eta_1=\tau(\eta_2)$ for some automorphism $\tau$ of $G^{\mr{ad}}_{\Omega'}$, and in particular $\rho_1= \tau(\rho_2)$. It remains to descend $\tau$ to an automorphism of $G^{\mr{ad}}_{\Omega}$. For this descent (not for the conclusion of the proposition!) we may assume $\tau$ is inner, because representatives of $\Out(G^{\mr{ad}}_{\Omega'})$ already exist in $\Aut(G^{\mr{ad}}_{\Omega})$. 
Thus we may assume $\rho_1= g \rho_2 g^{-1}$ for some $g \in G^{\mr{ad}}(\Omega')$. For any $\zeta \in \Aut(\Omega'/\Omega)$, $\zeta (\rho_i)= \rho_i$ for each $i$, so for all $x \in \Gamma$, 
\[
\zeta(g) \rho_2(x) \zeta(g)^{-1}= g \rho_2(x) g^{-1},
\]
implying that $g^{-1}\zeta(g)$ centralizes $\rho_2$, hence (by Zariski-density and that $G^{\mr{ad}}$ is adjoint) $g= \zeta(g)$. Finally we deduce that $g$ lies in $G^{\mr{ad}}(\Omega)$ after all, and so we have verified the $G^{\mr{ad}}$-superrigidity of $\Gamma$. 
\end{proof}
\begin{rmk}
The extension $\Omega'/\Omega$ produced by \cite[Theorem VIII.3.4(a)]{margulis:discrete} is not finite unless $\Omega= \CC$, so the last step of this argument is important.
\end{rmk}

For the rest of this section, we fix a neat compact open subgroup $K_0 \subset G(\A_f)$. We next fix an appropriate integral model of $\Sh_{K_0}(G, X)$. Because of the generality in which we work, we have only the `soft' construction of integral models coming from spreading-out arguments.  
We choose our integral model to parallel the setup of \S \ref{abstract} and will benefit from the special properties of such a model examined in \cite[Theorem 4.1]{pila-shankar-tsimerman:andre-oort}. More precisely, let $E=E(G, X)$ be the reflex field of $(G, X)$. Then \textit{loc. cit.} shows that there is an integer $N \geq 1$ such that $\Sh_{K_0}(G, X)$ has an integral model $\mc{S}_{K_0}$ over $\mc{O}_E[1/N]$ such that every special point of $\Sh_{K_0}(G, X)(\ol{\QQ})$ extends to an element of $\mc{S}_{K_0}(\ol{\ZZ}[1/N])$; this is a weak analogue of the assertion that CM abelian varieties have potentially good reduction everywhere (which suggests it may be possible to take $N=1$). The model $\mc{S}_{K_0}$, as in our abstract setup in \S \ref{abstract} (see the discussion before Definition \ref{superrigdef}), is the complement of a relative strict normal crossings divisor in a smooth projective scheme $\ol{\mc{S}}_{K_0}$ over $\mc{O}_E[1/N]$.

For each element of $\pi_0(\Sh_{K_0}(\CC))$, we fix a special point $s \in \Sh(\CC)$ whose image $sK_0 \in \Sh_{K_0}(\CC)$ belongs to the given component, which we as in \S \ref{localsysdef} denote by $S_{K_0, s}$, a smooth geometrically-connected variety defined over some finite (abelian) extension $E_{K_0, s}$ of the reflex field $E$. From now on we only consider this connected Shimura variety. Our integral model $\mc{S}_{K_0}$ then determines an integral model $\mc{S}_{K_0, s}$ of $S_{K_0, s}$: the closure $\mc{S}_{K_0,s} := \ol{S_{K_0, s}}$ in $\mc{S}_{K_0}$ is generically smooth over $\mc{O}_{E(K_0,s)}$ so after increasing $N$ we can assume $\mc{S}_{K_0, s}$ is smooth over $\mc{O}_{E_{K_0, s}}[1/N]$, and $s$ extends to an element of $\mc{S}_{K_0, s}(\ol{\ZZ}[1/N])$. We for each $\ell$ have the canonical representation
\[
\rho_{K_0, s, \ell} \colon \pi_1(S_{K_0, s}, \bar{s}) \to K_{0, \ell} \subset G(\QQ_{\ell})
\]
described in \S \ref{localsysdef} and whose specialization to the special point $sK_0$ is described in Lemma \ref{specialgalois}. We wish, in combination with Proposition \ref{svsuperrig}, to apply the results of \S \ref{abstract} to these $\ell$-adic representations but must first explain a compatibility between the topological and \'etale fundamental group representations.

We write the basepoint $s=[x, a]$ for some $x \in X^+$ and $a \in G(\A_f)$, which we may take among the representatives $\mc{C}_{K_0}$ of $G(\QQ)_+ \backslash G(\A_f)/K_0$. The complex manifold $S_{K_0, s}(\CC)$ then canonically identifies to $\Gamma_a \backslash X^+$ (see Equation \eqref{components}), and $\pi_1^{\mr{top}}(S_{K_0, s}(\CC), \bar{s})$ (using $x \in X^+$) identifies with the adjoint image $\Gamma:= \Gamma_a^{\mr{ad}} \subset G^{\mr{ad}}(\QQ)$: the former is $\Aut((X^+, x)/(\Gamma_a \backslash X^+, \bar{s}))^{\mr{op}}$, which is $\Gamma$ via the (right!) action $y \mapsto \gamma^{-1}(y)$ ($\gamma \in \Gamma$, $y \in X^+$).\footnote{In fact, $\Gamma_a \to \Gamma$ is an isomorphism since $Z_G(\QQ)$ is discrete in $Z_G(\A_f)$, and $K$ is compact and neat.} Using the identification in Equation \eqref{components}, we can compute the composite homomorphism
\[
\Gamma \xrightarrow{\sim} \pi_1^{\mr{top}}(S_{K_0, s}(\CC), \bar{s}) \to \pi_1(S_{K_0, s}, \bar{s}) \to K_0
\]
by checking its image mod any normal open subgroup $K \subset K_0$: since $[\gamma^{-1}x, a]_K= [x, \gamma a]_K= [x, a]_K \cdot (a^{-1}\gamma a)$, $\gamma \in \Gamma$ maps under the composite to $a^{-1} \gamma a \in K_0$. In particular, the diagram
\[
\xymatrix{
\pi_1^{\mr{top}}(S_{K_0, s}, \bar{s}) \ar[r] \ar[d] & G^{\mr{ad}}(\QQ) \ar[d]^{a^{-1}(\bullet)a} \\
\pi_1(S_{K_0, s}, \bar{s}) \ar[r]_-{\rho^{\mr{ad}}_{K_0, s, \ell}} & G^{\mr{ad}}(\ol{\QQ}_{\ell})
}
\]
commutes, where the top arrow is induced by the inclusion $\Gamma \subset G^{\mr{ad}}(\QQ)$. 

Now, since $G$ is of non-compact type, the Borel density theorem (\cite[Theorem 4.10]{platonov-rapinchuk}) implies that the arithmetic subgroup $\Gamma \subset G^{\mr{ad}}(\QQ)$ is Zariski-dense. By Proposition \ref{svsuperrig} and this Zariski-density, the results of \S \ref{abstract} apply to $\Gamma \subset G^{\mr{ad}}(\QQ)$ (the ``$\rho$" of \textit{loc. cit.}). 
Specifically, we may apply part (1) of Proposition \ref{nearcompatible} (note that all the fields ``$F(\lambda)$" of \textit{loc. cit.} equal $E_{K_0, s}$) to deduce that, enlarging $N$ if necessary, for all $\ell$ the adjoint projection of $\rho_{K_0, s, \ell}$ extends to a homomorphism
\[
\rho^{\mr{ad}}_{K_0, s, \ell} \colon \pi_1(\mc{S}_{K_0, s}[1/\ell], \bar{s}) \to G^{\mr{ad}}(\QQ_{\ell}).
\]
(We do not change the notation, but our new $N$ is what was denoted $N'$ in Proposition \ref{nearcompatible}.) 
To lighten the notation, since $K_0$ and $s$ are fixed, and we only study the $G^{\mr{ad}}$-valued homomorphisms, we will for the remainder of the paper abbreviate
\begin{equation}\label{adjointrep}
\rho_{\ell}=\rho^{\mr{ad}}_{K_0, s, \ell}, \qquad \qquad  \rho_{\ell, v} = \rho^{\mr{ad}}_{K_0, s, \ell}|_{(\mc{S}_{K_0, s})_{\kappa(v)}},
\end{equation}
for $v$ a closed point of $\Spec\,  \mc{O}_{E_{K_0, s}}$.
\begin{cor}\label{svnearcompatible}
For each pair of primes $\ell$ and $\ell'$, and each prime $v$ of $\mc{O}_{E_{K_0, s}}[1/(N\ell \ell')]$, there exists $\tau= \tau(\ell, \ell', v) \in \Aut(G^{\mr{ad}}_{\ol{\QQ}_{\ell'}})$ such that $\rho_{\ell, v}$ and $\tau(\rho_{\ell', v})$ are companions.
\end{cor}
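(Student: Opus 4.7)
The corollary is essentially a direct specialization of part (2) of Proposition \ref{nearcompatible} to the Shimura setting that has just been assembled, so the plan is to verify that the hypotheses of that proposition are met and then invoke it. Concretely, I would apply \S\ref{abstract} with $F = E_{K_0,s}$, $X_F = S_{K_0,s}$, $\ol{X}_F$ a smooth compactification with strict normal crossings boundary (whose existence is used in the construction of $\mc{S}_{K_0,s}$), $\bar{x} = \bar{s}$, and target group $G^{\mr{ad}}$.

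The input representation $\rho$ of \S\ref{abstract} is taken to be the topological fundamental group homomorphism $\Gamma \hookrightarrow G^{\mr{ad}}(\QQ) \subset G^{\mr{ad}}(\CC)$, identified with $\pi_1^{\mr{top}}(S_{K_0,s}(\CC), \bar{s}) \to G^{\mr{ad}}(\CC)$ via the discussion preceding Equation \eqref{adjointrep}. By Borel density this representation has Zariski-dense image, and by Proposition \ref{svsuperrig} the group $\Gamma$ is $G^{\mr{ad}}$-superrigid, so the blanket assumption of \S\ref{abstract} is satisfied. The $\ell$-adic avatars $\rho_{\lambda, \ol{\QQ}}$ produced in \S\ref{abstract} then agree with the restriction to $\pi_1(S_{K_0,s} \otimes_{E_{K_0,s}} \ol{\QQ}, \bar{s})$ of the $\rho_{\ell}$ built from canonical model theory, via the commutative square recalled just before Corollary \ref{svnearcompatible} (up to the conjugation by $a^{-1}$, which does not affect conjugacy classes or companionship). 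The key bookkeeping point is that the descent field $F(\lambda)$ of \S\ref{abstract} coincides with $E_{K_0,s}$ for every $\lambda$: this is immediate because canonical model theory already provides a descent of each $\rho_{\ell}$ to $\pi_1(S_{K_0,s}, \bar{s})$, so the stabilizer of the $\ol{\QQ}_{\lambda}$-conjugacy class of $\rho_{\lambda, \ol{\QQ}}$ contains $\Gamma_{E_{K_0,s}}$, and the uniqueness clause of Lemma \ref{descentlem} forces $F(\lambda) = E_{K_0,s}$.

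With these identifications in place, part (1) of Proposition \ref{nearcompatible} has already been invoked to fix the integer $N$ and produce the extensions $\rho_{\ell}$ to $\pi_1(\mc{S}_{K_0,s}[1/\ell], \bar{s})$ used in Equation \eqref{adjointrep}. Applying part (2) of Proposition \ref{nearcompatible} to any $v$ of $\mc{O}_{E_{K_0,s}}[1/(N\ell\ell')]$ then yields the desired automorphism $\tau = \tau(\ell, \ell', v) \in \Aut(G^{\mr{ad}}_{\ol{\QQ}_{\ell'}})$ with $\rho_{\ell, v}$ and $\tau(\rho_{\ell', v})$ companions as representations of $\pi_1((\mc{S}_{K_0,s})_{\kappa(v)}, \bar{s}_v)$.

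The step I expect to require the most care is not the invocation itself but the matching of conventions, specifically checking that the extensions of $\rho_{\lambda, \ol{\QQ}}$ constructed abstractly in \S\ref{abstract} (via $G$-superrigidity and Lemma \ref{descentlem}) really coincide (up to inner conjugation) with the canonical-model extensions $\rho_{K_0, s, \ell}^{\mr{ad}}$, so that the companion conclusion of Proposition \ref{nearcompatible}(2) transfers verbatim to the $\rho_{\ell, v}$ of Equation \eqref{adjointrep}. This follows from Lemma \ref{descentlem} applied to $\Pi = \pi_1(S_{K_0,s}, \bar{s})$ and $\Pi' = \pi_1(S_{K_0,s} \otimes \ol{\QQ}, \bar{s})$, since both candidate extensions restrict to the same Zariski-dense homomorphism on $\Pi'$, and adjoint targets have trivial centralizers.
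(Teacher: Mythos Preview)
Your proposal is correct and follows exactly the paper's approach: the paper's proof is the one-line ``This follows from part (2) of Proposition \ref{nearcompatible},'' and the verification of hypotheses you spell out (Zariski-density via Borel, $G^{\mr{ad}}$-superrigidity via Proposition \ref{svsuperrig}, $F(\lambda)=E_{K_0,s}$ from canonical models, and the matching of the abstract $\rho_{\lambda}$ with the canonical $\rho_{\ell}$ via the commutative square and Lemma \ref{descentlem}) is precisely what the paper establishes in the paragraphs immediately preceding the corollary. Your extra care in the final paragraph about uniqueness of descents is sound and matches the paper's implicit use of Lemma \ref{descentlem}.
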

\begin{proof}
This follows from part (2) of Proposition \ref{nearcompatible}. 
\end{proof}
\subsection{Construction of special points}
Our task now is to show that the automorphism $\tau$ in the Corollary is inner, using Proposition \ref{symbreak}. To use \textit{loc. cit.}, we need to show that Assumption \ref{horizontal} holds for the prime $v$. Our technique is to ``break the symmetry" using the horizontal sections of $\mc{S}_{K_0, s}$ given by carefully-chosen special points.

To construct the necessary special points on $\Sh_{K_0}(G, X)$, we will need to construct tori in $G$ with good properties. Recall that an algebraic torus $T$ over a field $k$ is called \textit{irreducible} if it contains no proper subtori (defined over $k$), or equivalently if the $\Gal(\bar{k}/k)$-representation $\rho_T$ on the $\QQ$-linearized character group $X^\bullet(T)_{\QQ}$ is irreducible. The following lemma is a variant of \cite[Theorem 1]{prasad-rapinchuk:irrtori}, which treats the absolutely simple case, and we thank the referee for noting a missing explanation in our earlier version of this argument; we fill it by adapting the argument of \textit{loc. cit.}, but we could alternatively apply \cite[Lemma 3.4]{geyer:approx}.
\begin{lem}\label{lemma:torus-approximation}
Let $H= \prod_{i \in I} H_i$ be an adjoint algebraic group over $\QQ$, where the $H_i$'s are the (adjoint) $\QQ$-simple factors of $H$. Let $S$ be a finite set of places of $\QQ$, and for each $v \in S$ fix a maximal torus $T_v \subset H_{\QQ_v}$ and an open subgroup $K_v \subset H(\QQ_v)$. Then there exists a maximal torus $T \subset H$ such that:
\begin{enumerate}
    \item When we decompose $T= \prod_{i \in I} T_i$ with each $T_i$ a maximal torus in $H_i$, each $T_i$ is irreducible.
    \item For all $v \in S$, $T_{\QQ_v}$ is $K_v$-conjugate to $T_v$.
\end{enumerate}
\end{lem}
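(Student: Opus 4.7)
The plan is to combine weak approximation on the (rational) variety of maximal tori of $H$ with a careful choice of auxiliary local data, following the strategy of \cite{prasad-rapinchuk:irrtori}, and then to decompose the problem across the $\QQ$-simple factors of $H$. Since any maximal torus $T \subset H = \prod_i H_i$ over $\QQ$ canonically splits as $T = \prod_i T_i$ with each $T_i \subset H_i$ maximal, the local datum $T_v$ decomposes in the same way (after replacing $K_v$ by $\prod_i(K_v \cap H_i(\QQ_v))$, still an open subgroup), and the irreducibility condition in (1) is factor-by-factor, it suffices to prove the lemma when $H$ is $\QQ$-simple.

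So assume $H = \Res_{F/\QQ} H^s$ with $H^s$ absolutely simple adjoint over a number field $F$, so that maximal $\QQ$-tori of $H$ correspond bijectively to maximal $F$-tori of $H^s$ via Weil restriction. Each local datum at $v \in S$ becomes a collection of local data at the places $w \mid v$ of $F$ on tori $T_w^s \subset H^s \otimes_F F_w$ with respect to open subgroups of $H^s(F_w)$. I would apply \cite[Theorem 1]{prasad-rapinchuk:irrtori} to $H^s/F$ with this local data augmented by one auxiliary local condition at an additional place $w_0$, where we prescribe an anisotropic maximal $F_{w_0}$-torus whose splitting field has Galois group the full Weyl group $W(H^s)$ of $H^s$. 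The output is an $F$-irreducible maximal $F$-torus $T^s \subset H^s$, locally conjugate to the prescribed $T_w^s$'s by elements of the given open subgroups, and whose $\Gamma_F$-image in $\Aut(X^\bullet(T^s))$ contains $W(H^s)$. Set $T = \Res_{F/\QQ}(T^s) \subset H$.

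The remaining, and principal, difficulty is verifying that $T$ is $\QQ$-irreducible and not merely that $T^s$ is $F$-irreducible. The character lattice $X^\bullet(T)_{\QQ}$ equals $\mathrm{Ind}_{\Gamma_F}^{\Gamma_{\QQ}} X^\bullet(T^s)_{\QQ}$, whose $\Gamma_{\QQ}$-irreducibility by Mackey reduces to (a) $\Gamma_F$-irreducibility of $X^\bullet(T^s)_{\QQ}$, which we have, and (b) for every $g \in \Gamma_{\QQ} \setminus \Gamma_F$, the conjugate representation ${}^g X^\bullet(T^s)_{\QQ}$ shares no constituent with $X^\bullet(T^s)_{\QQ}$ over $\Gamma_F \cap g \Gamma_F g^{-1}$. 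Over $\overline{\QQ}$ the decomposition $T_{\overline{\QQ}} = \prod_{\sigma \colon F \hookrightarrow \overline{\QQ}} T^s_\sigma$ identifies the conjugates ${}^g X^\bullet(T^s)$ with character lattices of \emph{different} absolutely simple factors of $H_{\overline{\QQ}}$, and the Weyl-group image forced by the auxiliary condition at $w_0$ then eliminates any common constituent. Alternatively, the entire Mackey calculation can be short-circuited by invoking \cite[Lemma 3.4]{geyer:approx}, which directly gives the irreducibility of Weil-restricted tori produced by such approximation arguments.
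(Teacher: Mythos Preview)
Your reduction to the $\QQ$-simple case is fine, and passing to $H^s/F$ via Weil restriction is a natural first move, but the Mackey step has a genuine gap. Applying \cite[Theorem~1]{prasad-rapinchuk:irrtori} to $H^s$ over $F$, even with your auxiliary place $w_0$ of $F$, only forces the image of $\Gamma_F$ in $\Aut(X^\bullet(T^s))$ to contain $W(H^s)$; it says nothing about how the conjugate representations ${}^g X^\bullet(T^s)_\QQ$ compare. Your sentence ``the Weyl-group image \ldots eliminates any common constituent'' conflates the fact that the conjugates sit inside \emph{different} simple factors of $H_{\ol{\QQ}}$ with the (unrelated) claim that they are non-isomorphic as representations of $\Gamma_F \cap g\Gamma_F g^{-1}$. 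Concretely: take $F=\QQ(\sqrt 2)$, $H^s=\mathrm{PGL}_{2,F}$, and let $T^s$ be the maximal torus attached to $L=F(\sqrt 3)$. Then $\Gal(L/F)\cong\ZZ/2$ is the full Weyl group (so your $w_0$-condition is met at any place where $3$ is a non-square), yet the quadratic character $\chi$ of $\Gamma_F$ cutting out $L$ is the restriction of the character of $\QQ(\sqrt 3)/\QQ$, hence ${}^g\chi=\chi$ for the nontrivial $g\in\Gal(F/\QQ)$ and $\mathrm{Ind}_{\Gamma_F}^{\Gamma_\QQ}\chi$ is reducible. So the Weyl-group condition over $F$ does not give Mackey's criterion~(b), and your argument does not exclude such $T^s$. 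The closing appeal to \cite{geyer:approx} is too unspecific to count as a proof without stating the lemma and checking that its hypotheses are met by your construction.

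The paper closes this gap by running the Prasad--Rapinchuk argument directly over $\QQ$ on the rational variety $\mr{Tori}_H$, rather than over $F$ on $\mr{Tori}_{H^s}$. Hilbert irreducibility produces a $T_0\in\mr{Tori}_H(\QQ)$ whose $\Gamma_\QQ$-image in $\Aut(X^\bullet(T_0))$ is the full $W(H,T_0)\rtimes A_0$, with $A_0$ the image of the $*$-action; auxiliary local conditions at a finite set $S'$ of places of $\QQ$ (disjoint from $S$) then pin this $\Gamma_\QQ$-image for any $T$ locally conjugate to $T_0$ along $S'$, and weak approximation finishes. Irreducibility is then immediate: each Weyl factor acts irreducibly on its $X^\bullet(T^s_\sigma)_\QQ$, and $A_0$ permutes the embeddings $\sigma$ transitively since $H$ is $\QQ$-simple. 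The point your argument misses is that the auxiliary places must be places of $\QQ$, chosen to control the full $\Gamma_\QQ$-image (including the permutation of the $\ol{\QQ}$-simple factors), not places of $F$. The paper in fact remarks that an earlier version of its own proof had precisely this lacuna, caught by the referee.
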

\begin{proof}
Since $H$ and each $T_v$ and $K_v$ canonically decompose as products over the indexing set $I$, it clearly suffices to prove the lemma for each $\QQ$-simple factor $H_i$ separately. We thus may and do assume that $H$ is $\QQ$-simple. Let $\mr{Tori}_H$ denote the variety of maximal tori in $H$. By a theorem of Chevalley (\cite[\S 2.4 Theorem 1] {voskresenskii:birat}), $\mr{Tori}_H$ is $\QQ$-rational. We obtain the generic torus (see \cite[\S 2.4.1]{voskresenskii:birat}) $\mc{T}$ over the function field $\QQ(\mr{Tori}_H)$. Let $k/\QQ(\mr{Tori}_H)$ denote its splitting field; then Voskresenskii has proven (\cite[\S 2.4.2 Theorem 2]{voskresenskii:birat}) that $\Gal(k/\QQ(\mr{Tori}_H))$ is isomorphic via the action on $X^\bullet(\mc{T})$ to $W(H, \mc{T}) \rtimes A_0 \subset \Aut(\Phi(H, \mc{T}))$, where $A_0$ denotes the image of the $*$-action of the Galois group. Note that $\mr{Tori}_H$ is a rational variety that satisfies the weak approximation property (\cite[Proposition 7.3]{platonov-rapinchuk}). The former property yields by Hilbert irreducibility a $T_0 \in \mr{Tori}_H(\QQ)$ such that the $\Gal(\ol{\QQ}/\QQ)$-action on $X^\bullet(T_0)$ has image $W(H, T_0) \rtimes A_0$. 

The argument of \cite[Theorem 1(i)]{prasad-rapinchuk:irrtori} finds a finite set $S'$ of finite places disjoint from $S$ such that any $T \in \mr{Tori}_H(\QQ)$ for which $T_{\QQ_v}$ is $H(\QQ_v)$-conjugate to $T_{0, \QQ_v}$ for all $v \in S'$ satisfies: for $g_0 \in H(\ol{\QQ})$ such that $\inn(g_0)(T)=T_0$, inducing isomorphisms $X^{\bullet}(T) \simeq X^{\bullet}(T_0)$ and $\Aut(X^{\bullet}(T)) \simeq \Aut(X^\bullet(T_0))$, we have $\inn(g_0)(\rho_{T}(\Gamma_{\QQ}))= \rho_{T_0}(\Gamma_{\QQ})$ (see Equation (2), \textit{loc. cit.}). This part of the argument uses only that $\rho_{T_0}(\Gamma_{\QQ})$ contains the whole Weyl group, but by the choice of $T_0$ we moreover deduce that $\rho_T(\Gamma_{\QQ})= W(H, T) \rtimes A_0$.

Now, any such $T$ is irreducible. Indeed, there is a finite extension $F/\QQ$ and an absolutely simple adjoint group $H^s$ over $F$ such that $H \cong \Res_{F/\QQ}(H^s)$, and consequently $T$ has the form $\Res_{F/\QQ} (T^s)$ for some maximal torus $T^s$ of $H^s$. Thus $X^\bullet(T)$ is isomorphic as abelian group to $\bigoplus_{\sigma \colon F \to \ol{\QQ}} X^\bullet(T^s_{\sigma})$, with $\rho_T(\Gamma_{\QQ})$ equal to $\left(\prod_{\sigma} W(H^s_{\sigma}, T^s_{\sigma}) \right) \rtimes A_0$. Since each $W(H^s_{\sigma}, T^s_{\sigma})$ acts irreducibly on $X^\bullet(T^s_{\sigma})_{\QQ}$, and $A_0$ acts transitively on the set of factors (since $H$ itself is $\QQ$-simple), the $\Gal(\ol{\QQ}/\QQ)$-action on $X^\bullet(T)_{\QQ}$ is irreducible, as desired. 

We conclude as in \cite[Theorem 1(ii)]{prasad-rapinchuk:irrtori} by applying the weak approximation property of $\mr{Tori}_H$ to find $T \in \mr{Tori}_H(\QQ) \cap \prod_{S \cup S'} U_v$ where for all $v \in S'$, $U_v= \inn(H(\QQ_v)) (T_{0, \QQ_v})$; and for all $v \in S$, $U_v$ is any open subset contained in $\inn(K_v)(T_v)$ (such orbits are open, by \cite[Proposition 3.3 and its Corollaries]{platonov-rapinchuk}). The conclusions of the Lemma now clearly hold for $T$.

\end{proof}

We continue with our construction of special points. We will (given $v$) construct a special point $x \in X$ satisfying several desiderata, including that $\mu_x$, after descent to $\ol{\QQ}$ and base-change to $\ol{\QQ}_p$, is positive with respect to a $\QQ_p$-rational Borel. The following argument incorporates some additional features we require into the argument of \cite[Proposition 1.2.5]{kisin-madapusipera-shin:hondatate} (which is inspired by \cite[5.12]{langlands-rapoport}). Here and for the remainder of the paper, we write $G^{\mr{ad}}= \prod_{i \in I} G_i$ for the decomposition of $G^{\mr{ad}}$ into $\QQ$-simple groups.
\begin{prop}\label{findspecialpoint}
Continue with the fixed connected component $X^+$ of $X$. Fix a prime $p$ such that $G_{\QQ_p}$ is unramified and an embedding $\iota_p \colon \ol{\QQ} \to \ol{\QQ}_p$. Then we can find a maximal torus $T' \subset G$ and a special subdatum $(T', x') \subset (G, X)$ such that:
\begin{enumerate}
    \item $x' \in X^+$. 
    \item When we decompose the image $(T')^{\mr{ad}}= \prod_i T'_i$ of $T'$ in $G^{\mr{ad}}$ into maximal tori $T'_i \subset G_i$, each $T'_i$ is irreducible.
    \item For all $i \neq j$, $T'_i$ is not isomorphic to $T'_j$.
    \item $T'_{\QQ_p}$ is unramified.
    \item The Hodge cocharacter $\mu'$ of $x'$, after descent to $\ol{\QQ}$ and extension along $\iota_p$, defines a cocharacter of $T'_{\ol{\QQ}_p}$ that is positive with respect to a $\QQ_p$-rational Borel subgroup of $G_{\QQ_p}$ containing $T'_{\QQ_p}$.
\end{enumerate}
\end{prop}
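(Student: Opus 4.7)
The plan is to prescribe a target maximal torus of $G^{\mr{ad}}$ locally at $\infty$, at $p$, and at a finite set of auxiliary primes, then invoke Lemma \ref{lemma:torus-approximation} to produce a $\QQ$-rational approximation, lift to a maximal torus of $G$, and finally construct $x'$ by conjugation. This adapts \cite[Proposition 1.2.5]{kisin-madapusipera-shin:hondatate}.

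First I would set up the local targets. At $\infty$: pick some $h_0 \in X^+$ and a maximal torus $T_\infty \subset G_\RR$ through which $h_0$ factors and whose image in $G^{\mr{ad}}_\RR$ is elliptic modulo center. At $p$: using unramifiedness of $G_{\QQ_p}$, fix a $\QQ_p$-rational Borel $B_p \subset G_{\QQ_p}$ and a maximal unramified torus $T_p \subset B_p$. The $G(\ol{\QQ}_p)$-orbit of $\iota_p \mu_{h_0}$ has a unique $B_p$-dominant representative $\mu_p^\circ \in X_\bullet(T_p)$; by varying $h_0$ within its real Weyl-group orbit in $X^+$ and choosing among the finitely many $\QQ_p$-rational Borels through $T_p$, one can arrange that, under some $\ol{\QQ}_p$-isomorphism $T_\infty \otimes_\RR \ol{\QQ}_p \xrightarrow{\sim} T_p \otimes_{\QQ_p} \ol{\QQ}_p$ of maximal tori of $G_{\ol{\QQ}_p}$, the cocharacters $\mu_{h_0}$ and $\mu_p^\circ$ coincide. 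At auxiliary primes: for every pair $i \neq j$ of isomorphic $\QQ$-simple factors of $G^{\mr{ad}}$, pick a distinct prime $q_{ij}$ (avoiding $p$) and local tori in $G_{i,\QQ_{q_{ij}}}$ and $G_{j,\QQ_{q_{ij}}}$ with visibly different splitting data (e.g.\ different splitting fields), so as to force $T'_i$ and $T'_j$ to be non-isomorphic as $\QQ$-tori. Fix small open compact neighborhoods $K_v$ at each place in $S := \{\infty,p\} \cup \{q_{ij}\}$, with $K_\infty \subset G^{\mr{ad}}(\RR)^+$.

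Next, apply Lemma \ref{lemma:torus-approximation} to $G^{\mr{ad}}$ with this data. This produces a maximal $\QQ$-rational torus $T'^{\mr{ad}} \subset G^{\mr{ad}}$ such that each simple factor $T'_i$ is irreducible and $T'^{\mr{ad}}_{\QQ_v}$ is $K_v$-conjugate to the prescribed target at each $v \in S$. Let $T' \subset G$ be the unique maximal torus whose image in $G^{\mr{ad}}$ is $T'^{\mr{ad}}$, using the bijection between maximal tori of $G$ and of $G^{\mr{ad}}$. Write $T'^{\mr{ad}}_\RR = \inn(\bar k)(T^{\mr{ad}}_{\infty,\RR})$ with $\bar k \in K_\infty$, lift $\bar k$ to $\tilde k \in G(\RR)^+$, and set $x' := \inn(\tilde k)(h_0)$. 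Then $x' \in X^+$ (since $\tilde k \in G(\RR)^+$ preserves $X^+$) and $h_{x'}$ factors through $T'_\RR$, yielding the desired special subdatum $(T', x') \subset (G, X)$.

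Verifications: (1) is by construction; (2) is immediate from the lemma. For (3), if $G_i \not\cong G_j$ as $\QQ$-groups their irreducible tori have Galois actions on the rational character lattices of different Weyl type and hence cannot be isomorphic, while if $G_i \cong G_j$ the local data at $q_{ij}$ distinguishes $T'_i$ from $T'_j$. Property (4) holds because $T'^{\mr{ad}}_{\QQ_p}$ is $K_p$-conjugate to the unramified $T^{\mr{ad}}_p$. For (5), writing $T'_{\QQ_p} = \inn(g_p)(T_p)$ for a lift $g_p \in G(\QQ_p)$ of an element of $K_p$, the $\QQ_p$-rational Borel $B'_p := \inn(g_p)(B_p)$ contains $T'_{\QQ_p}$; the synchronization built into the local data ensures $\mu_{x', \ol{\QQ}_p} = \inn(g_p)(\mu_p^\circ)$, which is $B'_p$-dominant. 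The main technical obstacle is precisely this last synchronization: one must align the specific cocharacter $\mu_{x', \ol{\QQ}_p}$ — not merely its $G(\ol{\QQ}_p)$-conjugacy class — with $B'_p$-dominance, which forces careful coordinated use of the freedom to move $h_0$ within its real Weyl-group orbit in $X^+$ and the finite choice of $\QQ_p$-rational Borels through $T_p$.
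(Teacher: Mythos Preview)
Your approach has a genuine gap at exactly the point you flag as ``the main technical obstacle,'' and the freedom you invoke to resolve it is insufficient. After applying Lemma \ref{lemma:torus-approximation} you obtain a global torus $T'$ together with \emph{independent} local conjugations $T'^{\mr{ad}}_{\RR}= \inn(\bar{k})T^{\mr{ad}}_{\infty}$ and $T'^{\mr{ad}}_{\QQ_p}= \inn(\bar{g}_p)T^{\mr{ad}}_p$. These induce a composite identification $X_\bullet(T_\infty)_{\ol{\QQ}_p} \cong X_\bullet(T')_{\ol{\QQ}_p} \cong X_\bullet(T_p)_{\ol{\QQ}_p}$ that is well-defined only up to an arbitrary element of the absolute Weyl group $W$; the element that actually arises is dictated by the global $T'$ and is not under your control. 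Your ``synchronization'' fixes in advance one $\ol{\QQ}_p$-isomorphism $T_{\infty,\ol{\QQ}_p}\cong T_{p,\ol{\QQ}_p}$ under which $\mu_{h_0}$ and $\mu_p^\circ$ agree, but that isomorphism has no relation to the one produced through $T'$. The adjustments you allow---moving $h_0$ in its real Weyl orbit and changing the $\QQ_p$-rational Borel through $T_p$---only let you modify the twist by elements of $W(G_{\RR},T_{\infty})(\RR)$ on one side and $W(G_{\QQ_p},T_p)(\QQ_p)$ on the other. For a restriction of scalars $G^{\mr{ad}}=\Res_{F/\QQ}G^s$ these rational Weyl groups are typically proper subgroups of $W$, so in general no such adjustment will make $\mu_{x',\ol{\QQ}_p}$ land in the $B'_p$-dominant chamber.

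The paper resolves this by a further modification of the torus after weak approximation, following \cite[Proposition 1.2.5]{kisin-madapusipera-shin:hondatate} and \cite[5.12]{langlands-rapoport}. One first applies the lemma to get $T$; the $B_p$-dominant cocharacter transported to $T$ and the Hodge cocharacter $\inn(g_\infty)\mu_\infty$ differ by some $n \in N_{G^{\mr{sc}}}(T^{\mr{sc}})(\CC)$, and since $T^{\mr{sc}}_{\RR}$ is anisotropic the cocycle $c \mapsto n\bar{n}^{-1}$ defines $\alpha_\infty \in H^1(\RR,T^{\mr{sc}})$. One then produces a global $\alpha \in H^1(\QQ,T^{\mr{sc}})$ restricting to $\alpha_\infty$ at $\infty$ and trivial at $p$ and at the auxiliary primes; by the Hasse principle for $G^{\mr{sc}}$ its image in $H^1(\QQ,G^{\mr{sc}})$ vanishes, yielding $g \in G^{\mr{sc}}(\ol{\QQ})$ with $g\sigma(g)^{-1}$ representing $\alpha$. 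Replacing $T$ by $T'=\inn(g)^{-1}T$ and $\mu_T$ by $\mu'=\inn(g)^{-1}\mu_T$ absorbs the Weyl twist: the choice of $\alpha_\infty$ forces $\mu'$ to be $G(\RR)^+$-conjugate to $\mu_\infty$ (giving $x'\in X^+$), while triviality of $\alpha$ at $p$ forces $T'_{\QQ_p}$ to be $G(\QQ_p)$-conjugate to $T_{\QQ_p}$ in a way that preserves $B_p$-dominance of $\mu'$. This Galois-cohomological step is the missing ingredient in your argument.
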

\begin{proof}
Fix any $x_{\infty} \in X^+$, corresponding to $h_{\infty} \colon \mathbb{S} \to G_\RR$. Choose a maximal torus $T_{\infty} \subset G_\RR$ containing the image of $h_\infty$. The image $T_{\infty}^{\mr{ad}}$ in $G^\ad_\RR$ is anisotropic since $\inn(h_\infty(i))$ is a Cartan involution of $G_\RR^\ad$. Let $\mu_{\infty} \in X_\bullet(T_\infty)$ be the Hodge cocharacter of $h_\infty$. The  $G(\CC)$-conjugacy class of $\mu_{\infty}$ is independent of $x_{\infty}$ (and of the choice of $X^+$), and we denote it by $\mu_{X, \infty}$; it is defined over $\ol{\QQ}$ and determines a unique $G(\ol{\QQ})$-conjugacy class $\mu_X$ of cocharacters $\mathbb{G}_{m, \ol{\QQ}} \to G_{\ol{\QQ}}$. For any rational prime $p'$, fix an embedding $\iota_{p'} \colon \ol{\QQ} \to \ol{\QQ}_{p'}$ (taking the given $\iota_p$ in the case $p'=p$), inducing an inclusion of the decomposition group $\Gamma_{\QQ_{p'}} \to \Gamma_{\QQ}$, and for $p'=p$ inducing from $\mu_X$ a $G(\ol{\QQ}_p)$-conjugacy class $\mu_{X, p}$ of cocharacters of $G_{\ol{\QQ}_p}$. 

Since $G_{\QQ_p}$ is unramified, $G_{\QQ_p}$ extends to a reductive group scheme $\ul{G}_{\ZZ_p}$ over $\ZZ_p$. Thus there is a $\QQ_p$-rational Borel subgroup $B_p \subset G_{\QQ_p}$ containing an unramified maximal torus $T_p \subset B_p \subset G_{\QQ_p}$. We let $\mu_p \in X_\bullet(T_p)$ denote the $B_p$-positive representative of $\mu_{X, p}$.

For each unordered pair $i \neq j$ of distinct elements of $I$, choose distinct primes $p_{i, j} \neq p$ and maximal tori $T_{i, p_{i, j}} \subset (G_i)_{\QQ_{p_{i, j}}}$ and $T_{j, p_{i, j}} \subset (G_j)_{\QQ_{p_{i, j}}}$ that are not isomorphic as $\QQ_{p_{i, j}}$-tori. (We can choose all the $p_{i, j}$ such that $G$ splits over $\QQ_{p_{i, j}}$, and then $(G_i)_{\QQ_{p_{i, j}}}$ contains a split maximal torus and (by \cite[Theorem 6.21]{platonov-rapinchuk}) an anisotropic maximal torus. If the isomorphism classes of maximal tori in $(G_i)_{\QQ_{p_{i, j}}}$ and $(G_j)_{\QQ_{p_{i, j}}}$ are different, we are obviously done, and otherwise we can take in one a split and in the other an anisotropic torus.) For $k \neq i, j$, let $T_{k, p_{i, j}} \subset (G_k)_{\QQ_{p_{i, j}}}$ be any maximal torus. By Lemma \ref{lemma:torus-approximation}, there is a maximal torus $T \subset G$ {(strictly speaking, the preimage in $G$ of the result of applying Lemma \ref{lemma:torus-approximation} in $G^{\mr{ad}}$)} such that: 
\begin{enumerate}
    \item Writing the image $T^{\mr{ad}}= \prod_i T_i$ of $T$ in $G^{\mr{ad}}$ as a product of maximal tori $T_i \subset G_i$, each $T_i$ is irreducible.
    \item For each unordered pair $i \neq j$ in $I$, $T^{\mr{ad}}_{\QQ_{p_{i, j}}}$ is $G^{\mr{ad}}(\QQ_{p_{i, j}})$-conjugate to (and in particular isomorphic to) $\prod_k T_{k, p_{i, j}}$.
    \item $T_{\RR}= \inn(g_{\infty}) T_{\infty}$ for some $g_{\infty} \in G(\RR)^+$ (the connected component of the identity in the analytic topology);
    \item there exists $g_p \in G(\QQ_p)$ such that $\inn(g_p)(T_p) = T_{\QQ_p}$.  
\end{enumerate}
Now $\inn(g_p)\mu_p \in X_\bullet(T_{\ol{\QQ}_p})$ is positive with respect to the $\QQ_p$-rational Borel $\inn(g_p)B_p \subset G_{\QQ_p}$, and it descends uniquely to a cocharacter $\mu_T \in X_\bullet(T_{\ol{\QQ}})$. The extension $\mu_T \otimes \CC$ is $G(\CC)$-conjugate to $\mu_{\RR}:= \inn(g_{\infty})\mu_{\infty}$, and so these two $T_{\CC}$-valued cocharacters must be conjugate by some $n \in N_{G^{\mr{sc}}}(T^{\mr{sc}})(\CC)$, the normalizer of the preimage $T^{\mr{sc}}$ of $T$ in the simply connected cover $G^{\mr{sc}}$ of $G^{\mr{der}}$: $\inn(n)\mu_{\RR}= \mu_T \otimes \CC$. Since $T^{\mr{sc}}_{\RR}$ is anisotropic, $n \bar{n}^{-1} \in T^{\mr{sc}}(\CC)$.\footnote{It suffices to show that for all $\chi \in X^\bullet(T^{\mr{sc}})$, $\chi(\bar{n}^{-1}t \bar{n})=\chi(n^{-1}tn)$ for all $t \in T^{\mr{sc}}(\CC)$. Complex conjugation $c$ acts by $-1$ on $X^\bullet(T^{\mr{sc}})$ since $T^{\mr{sc}}_{\RR}$ is anisotropic, so 
\[
\chi(n^{-1}t^{-1}n)=(n \cdot \chi)^{-1}(t)= {}^c(n \cdot \chi)(t)=\ol{\chi(n^{-1} \bar{t} n)}=({}^c \chi)(\bar{n}^{-1}t \bar{n})= \chi(\bar{n}^{-1} t^{-1} \bar{n}).
\]}
Let $\alpha_{\infty} \in H^1(\RR, T^{\mr{sc}})$ be the class of the unique cocycle sending complex conjugation $c$ to $n \bar{n}^{-1}$. By \cite[Lemma 1.2.1]{kisin-madapusipera-shin:hondatate} and its proof, there is a class $\alpha \in H^1(\QQ, T^{\mr{sc}})$ such that
\begin{enumerate}
    \item $\alpha|_{\RR}= \alpha_{\infty}$; 
    \item $\alpha|_{\QQ_{p'}}=0$ for all primes $p'$ except for a single auxiliary $p_0 \not \in \{p\} \cup \{p_{i, j}\}_{i, j}$.
\end{enumerate}
Let $i \colon T^{\mr{sc}} \to G^{\mr{sc}}$ be the inclusion. The class $i(\alpha)$ is trivial everywhere locally, as is clear from the construction of $\alpha_{\infty}$ and from the vanishing (\cite[Theorem 6.4]{platonov-rapinchuk}) of $H^1(\QQ_{p_0}, G^{\mr{sc}})$. By the Hasse principle for simply-connected groups (\cite[Theorem 6.6]{platonov-rapinchuk}), $i(\alpha)=0$, i.e., there exists $g \in G^{\mr{sc}}(\ol{\QQ})$ such that $\sigma \mapsto g \sigma(g)^{-1} \in T^{\mr{sc}}(\ol{\QQ})$ is a cocycle representative of $\alpha$.

For all primes $p' \neq p_0$ (including $p'=p$), $\alpha|_{\QQ_{p'}}=0$ shows that there exists $s_{p'} \in T^{\mr{sc}}(\ol{\QQ}_{p'})$ such that $g\sigma(g)^{-1}= s_{p'} \sigma(s_{p'})^{-1}$ for all $\sigma \in \Gamma_{\QQ_{p'}}$, i.e. $s_{p'}^{-1}g$ lies in $G^{\mr{sc}}(\QQ_{p'})$. Similarly, $\alpha|_{\RR}= \alpha_{\infty}$ implies that for some $t \in T^{\mr{sc}}(\CC)$, $g \bar{g}^{-1}= t n \bar{n}^{-1} \ol{t}^{-1}$, i.e. $g^{-1}tn \in G^{\mr{sc}}(\RR)$.

The torus $\inn(g)^{-1}T_{\ol{\QQ}}$ is defined over $\QQ$ (since $T$ is and $g\sigma(g)^{-1} \in T^{\mr{sc}}(\ol{\QQ})$ for all $\sigma \in \Gamma_{\QQ}$); let $T' \subset G$ be the underlying maximal $\QQ$-torus of $\inn(g)^{-1}T_{\ol{\QQ}}$, and let $\mu'= \inn(g)^{-1}\mu_T \in X_{\bullet}(T')$. Then $\inn(n^{-1}t^{-1}g)\mu'= \inn(g_{\infty})\mu_{\infty}$, so (recall $g^{-1}tn \in G^{\mr{sc}}(\RR)$) $\mu'$ is in the same $G^{\mr{sc}}(\RR)$-conjugacy class, hence the same $G(\RR)^+$-conjugacy class, as $\mu_{\infty}$ (for any reductive group over $\RR$, $G^{\mr{sc}}(\RR)$ is connected).  
That is, $\mu'$ corresponds to a point $x'= \inn(g^{-1}tng_\infty)x_\infty$
in the same component $X^+$ of $X$ as $x_{\infty}$.

We now check the remaining desiderata for $(T', x')$. First note that for all $p' \neq p_0$, we have
\[
T'_{\ol{\QQ}_{p'}}= \inn(g^{-1}) T_{\ol{\QQ}_{p'}}= \inn(g_{\QQ_{p'}}^{-1}s_{p'}^{-1})T_{\ol{\QQ}_{p'}}= \inn(g_{\QQ_{p'}}^{-1})T_{\ol{\QQ}_{p'}},
\]
for $g_{\QQ_{p'}} = s_{p'}^{-1}g \in G^{\mr{sc}}(\QQ_{p'})$ (recall $s_{p'} \in T^{\mr{sc}}(\ol{\QQ}_{p'})$). Thus $\inn(g_{\QQ_{p'}}^{-1})T_{\QQ_{p'}}$ and $T'_{\QQ_{p'}}$ are $\QQ_{p'}$-subtori of $G_{\QQ_{p'}}$ that are equal in $G_{\ol{\QQ}_{p'}}$, and hence are already equal over $\QQ_{p'}$: $\inn(g_{\QQ_{p'}}^{-1})T_{\QQ_{p'}}= T'_{\QQ_{p'}}$. In particular this holds for $p'=p$, so $T'_{\QQ_p}$ is also unramified (since $g_p \in G(\QQ_p)$) and we see $\mu'= \inn(g^{-1})\mu_T= \inn(g_{\QQ_p}^{-1}) \mu_T= \inn(g_{\QQ_p}^{-1} g_p) \mu_p$; thus $\mu'$ is positive with respect to the $\QQ_p$-rational Borel subgroup $B'= \inn(g_{\QQ_p}^{-1}g_p)B_p$ containing $T'_{\QQ_p}$. We also conclude that for all distinct $i, j \in I$, $(T'_i)_{\QQ_{p_{i, j}}}$ is isomorphic to $(T_i)_{\QQ_{p_{i, j}}}$ and thus is not isomorphic to $(T_j)_{\QQ_{p_{i, j}}} \cong (T'_j)_{\QQ_{p_{i, j}}}$. In particular $T'_i$ is not isomorphic to $T'_j$ for distinct $i, j$.

It remains only to check that each $\QQ$-torus $T'_i$ is irreducible. We have seen that for all primes $p' \neq p_0$, $T_{\QQ_{p'}}$ and $T'_{\QQ_{p'}}$ are $G(\QQ_{p'})$-conjugate. In particular, for all such $p'$ and for all $i \in I$, $(T'_i)_{\QQ_{p'}}$ is isomorphic to $(T_i)_{\QQ_{p'}}$. The Galois representations $\rho_{T_i}, \rho_{T'_i} \colon \Gamma_{\QQ} \to \mr{GL}_{n_i}(\QQ)$ (of some dimensions $n_i$) on the $\QQ$-vector spaces $X^{\bullet}(T_i)_{\QQ}$ and $X^{\bullet}(T'_i)_{\QQ}$ are therefore isomorphic after restriction to each decomposition group $\Gamma_{\QQ_{p'}}$, $p' \neq p_0$. By the Brauer-Nesbitt theorem and the \v{C}ebotarev density theorem, $\rho_{T_i}$ and $\rho_{T'_i}$ are isomorphic for all $i$;\footnote{Note that Brauer-Nesbitt shows they are isomorphic as representations to $\mr{GL}_n(\QQ)$, not merely as $\mr{GL}_n(\ol{\QQ})$-representations.} thus the irreducibility of $\rho_{T_i}$ implies that of $\rho_{T'_i}$.
\end{proof}

\begin{rmk}
    In the Siegel moduli case, conditions (4) and (5) in the Proposition imply that the associated CM abelian variety is ordinary at a suitable place above $p$. More generally, by \cite[Proposition 9.10]{bakker-shankar-tsimerman:canonicalmodels}, they imply the special subdatum $(T', x')$ is $\mu$-ordinary in the sense of \cite[Definition 9.1]{bakker-shankar-tsimerman:canonicalmodels} (generalizing a definition implicit in \cite{rapoport-richartz} and explicit in \cite{wedhorn:ordinary}). The proof of non-emptiness of the $\mu$-ordinary locus in general (\cite[Theorem 9.2]{bakker-shankar-tsimerman:canonicalmodels}) relies on an argument closely-related to that of Proposition \ref{specialgalois} to produce special points satisfying conditions (1), (4), (5), and a weakening of (2) (that the Galois group of the $T_i'$ contain the Weyl group, as in \cite[Proposition 5.11]{chai-oort:AVisogJac}).
\end{rmk}

We next make the output of Lemma \ref{specialgalois} more explicit in the form needed for our application. 
\begin{lem}\label{infiniteorder}
Let $(T_x, x) \subset (G, X)$ be a special subdatum with $x \in X^+$, and for $a \in G(\A_f)$, let $s=[x, a] \in \Sh(\CC)$ be the associated special point. Let $v$ be a prime of $\mc{O}_{E(sK_0)}[1/N])$ above a rational prime $p$, and assume:
\begin{enumerate}
\item When we write $T_x^{\mr{ad}}= \prod_{i \in I} T_{x, i}$ as a product of maximal tori $T_{x, i} \subset G_i$, each $T_{x, i}$ is irreducible, and for all $i \neq j$, $T_{x, i}$ is not isomorphic to $T_{x, j}$.
\item $G_{\QQ_p}$ is unramified, $T_{x, \QQ_p}$ is unramified, and when we fix an embedding $\iota \colon \ol{\QQ} \to \ol{\QQ}_p$ inducing the place $v$ and a commutative diagram of field embeddings
\[
\begin{tikzcd}
\ol{\QQ} \ar[r, hookrightarrow, "\iota"] & \ol{\QQ}_p \\
E(sK_0) \ar[r, hookrightarrow] \ar[u, phantom, sloped, "\subset"] & E(sK_0)_v, \ar[u, phantom, sloped, "\subset"]
\end{tikzcd}
\]
$\mu_{x, \iota}$ is positive with respect to a $\QQ_p$-rational Borel subgroup of $G_{\QQ_p}$ containing $T_{x, \QQ_p}$. (Here $\mu_{x, \iota}$ denotes the extension of $\mu_x$ along $\iota$ to an element of $X_{\bullet}(T_{x, \ol{\QQ}_p})$.)

\item $K_0$ contains $\underline{G}_{\ZZ_p}(\ZZ_p)$, and ($a_p$ being the $p$-component of $a \in G(\A_f)$) $a_p$ is in $\underline{G}_{\ZZ_p}(\ZZ_p)$.
\end{enumerate}
Let $\ell \neq p$ be a distinct prime. Then setting $\rho_{\ell, sK_0}= \rho_{\ell}|_{sK_0}$, the projection to each component $G_i(\QQ_{\ell})$ of the element $\rho_{\ell, sK_0}(\Frob_v)$ has infinite order.
\end{lem}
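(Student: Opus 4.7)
The plan is to apply Lemma \ref{specialgalois} with $\sigma = \Frob_v$. Choose an idele $\alpha \in \mathbb{A}_{E(sK_0)}^\times$ with $\alpha_v = \pi_v$ a uniformizer at $v$ and $\alpha_w = 1$ at all other finite places, so that $\mathrm{rec}_{E(sK_0)}(\alpha) = \Frob_v$. The lemma furnishes a unique $q \in G(\QQ)_+$ with $qx = x$ (hence $q \in T_x(\QQ)$) and $qak_0(\Frob_v) = r_x(\alpha)a$. Since $\alpha$ is trivial at every finite place not lying above $p$, we have $r_x(\alpha)_{\ell'} = 1$ for all rational primes $\ell' \neq p$. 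Reading the equation at the $\ell$-component yields $k_0(\Frob_v)_\ell = a_\ell^{-1} q^{-1} a_\ell \in G(\QQ_\ell)$, so $\rho_{\ell, sK_0}(\Frob_v) \in G^{\mathrm{ad}}(\QQ_\ell)$ is $G^{\mathrm{ad}}(\QQ_\ell)$-conjugate to $(q^{\mathrm{ad}})^{-1}$, whose projection to $G_i(\QQ_\ell)$ is $q_i^{-1}$, with $q^{\mathrm{ad}} = (q_i)_{i \in I} \in T_x^{\mathrm{ad}}(\QQ) = \prod_i T_{x,i}(\QQ)$.

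Thus it suffices to show each $q_i$ has infinite order in $T_{x,i}(\QQ)$. Because $T_{x,i,\QQ_p}$ is unramified (hypothesis (2)), the torsion of $T_{x,i}(\QQ) \subset T_{x,i}(\QQ_p)$ is contained in the unique maximal bounded subgroup $T_{x,i}(\ZZ_p)$, so it is enough to exhibit $q_i \notin T_{x,i}(\ZZ_p)$. Examining the $p$-component of the defining equation gives $q^{-1} r_x(\alpha)_p = a_p k_0(\Frob_v)_p a_p^{-1}$; by hypothesis (3) both $a_p$ and $k_0(\Frob_v)_p$ lie in $\ul{G}_{\ZZ_p}(\ZZ_p)$, so the right side lies in $\ul{G}_{\ZZ_p}(\ZZ_p) \cap T_x(\QQ_p) = T_x(\ZZ_p)$. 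Consequently $q$ and $r_x(\alpha)_p$ have the same image under the Kottwitz-type valuation map $T_x(\QQ_p) \to X_\bullet(T_x)^{\Gamma_{\QQ_p}}$, whose kernel is $T_x(\ZZ_p)$. Unwinding the norm formula $r_x(\alpha)_p = N_{E(sK_0)_v/\QQ_p}(\mu_{x,v}(\pi_v))$, this common image is the Galois trace
\[
\mathrm{Tr}(\mu_x) := \sum_{\sigma \in \Gamma_{\QQ_p}/\Gamma_{E(sK_0)_v}} \sigma_*(\mu_x) \in X_\bullet(T_x)^{\Gamma_{\QQ_p}}.
\]

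The proof concludes by showing the projection of $\mathrm{Tr}(\mu_x)$ to $X_\bullet(T_{x,i})$ is nonzero for each $i$. By hypothesis (2), each summand $\sigma_*(\mu_x)$ is dominant with respect to the $\Gamma_{\QQ_p}$-stable Borel containing $T_{x,\QQ_p}$. Furthermore $\mu_{x,i} \neq 0$: since the Mumford--Tate group of $x$ equals $T_x$, its image $T_{x,i}$ in $G_i$ is a nontrivial maximal torus, which forces $\mu_x$ to project nontrivially (if it projected to $0$, then $h_x$ would too, and the Mumford--Tate group would lie in the kernel of $G \to G_i$). A sum of cocharacters in the closed dominant Weyl chamber is zero only if every summand vanishes, so the projection of $\mathrm{Tr}(\mu_x)$ to $X_\bullet(T_{x,i})$ is nonzero; hence $q_i \notin T_{x,i}(\ZZ_p)$ and $q_i$ has infinite order, as required. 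The main technical burden lies in this last step: correctly identifying the $p$-adic valuation of $r_x(\alpha)_p$ as the Galois trace of $\mu_x$ via the norm formula, and then using dominance together with the Mumford--Tate hypothesis to rule out cancellation in each $\QQ$-simple factor.
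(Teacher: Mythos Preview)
Your approach mirrors the paper's closely: both reduce to showing each $q_i \in T_{x,i}(\QQ)$ has infinite order by reading the relation $qak_0(\Frob_v) = r_x(\alpha)a$ at the $\ell$- and $p$-components, then using positivity of $\mu_x$ with respect to a $\QQ_p$-rational Borel to rule out cancellation in the Galois trace. Your Kottwitz-map formulation is a clean repackaging of the paper's direct computation with characters and $p$-adic valuations.

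There is, however, a genuine gap at the first step. From $qx = x$ you conclude parenthetically that $q \in T_x(\QQ)$, and later you assert that the Mumford--Tate group of $x$ equals $T_x$. Neither is given: the hypothesis only says $h_x$ factors through $T_x$, so $q$ lies in $\mathrm{Cent}_G(M_x)(\QQ)$ where $M_x \subseteq T_x$ is the Mumford--Tate group, and a priori this centralizer could strictly contain $T_x$. The paper spends its opening paragraph deducing $M_x^{\mathrm{ad}} = T_x^{\mathrm{ad}}$ (hence $\mathrm{Cent}_G(M_x) = T_x$) from hypothesis (1): since each $T_{x,i}$ is irreducible and the $T_{x,i}$ are pairwise non-isomorphic as $\QQ[\Gamma_{\QQ}]$-modules, the surjections $M_x^{\mathrm{ad}} \twoheadrightarrow T_{x,i}$ assemble into an isomorphism onto $\prod_i T_{x,i}$. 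This is precisely where hypothesis (1) enters, and without it your argument does not start. (For $\mu_{x,i} \neq 0$ you can in fact bypass the Mumford--Tate group and invoke the Shimura-datum axiom that $h_x$ projects nontrivially to each $\QQ$-simple factor of $G^{\mathrm{ad}}$, as the paper does.)

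A smaller point: your formula that the Kottwitz image of $r_x(\alpha)_p$ equals $\mathrm{Tr}(\mu_x)$ on the nose uses that $E(sK_0)_v/\QQ_p$ is unramified, so that $\pi_v$ has $p$-valuation $1$. The paper verifies this from hypothesis (3) by checking that inertia at $v$ fixes $[x,a]_{K_0}$, which holds once the maximal bounded subgroup $T_x(\QQ_p)^0$ is contained in $a_p K_{0,p} a_p^{-1}$. Your final conclusion survives without this (the valuation would differ by the positive factor $1/e$), but as written the trace formula needs this justification.
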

\begin{proof}
In this proof, we observe Convention \ref{reclazy}. By definition, the Mumford-Tate group $M_x$ of $h_x$ is contained in $T_x$. 
For all $i \in I$, $T_{x, i}$ contains no proper $\QQ$-subtorus, and (by definition of a Shimura datum) $h_x$ has non-trivial projection to each $G_{i, \RR}$. Thus $M_x$ surjects onto each $T_{x, i}$, so the canonical composition
\[
X^\bullet(T_{x, i}) \hookrightarrow X^\bullet(T_x^{\mr{ad}}) \twoheadrightarrow X^{\bullet}(M_x^{\mr{ad}})
\]
is injective for each $i$. Since the $\QQ[\Gamma_{\QQ}]$-modules $X^{\bullet}(T_{x, i})_{\QQ}$ are irreducible and non-isomorphic, $\bigoplus_i X^{\bullet}(T_{x, i})$ injects into (and as we know surjects onto) $X^{\bullet}(M_x^{\mr{ad}})$ via the direct sum of canonical maps, hence $M_x^{\mr{ad}}= T_x^{\mr{ad}}$. Consequently, $M_x$ contains $T_x \cap G^{\mr{der}}$.

Recall from Lemma \ref{specialgalois} (the notation of which we maintain) that to compute the Galois action we write $qx=x$, $qak_0(\sigma)=r_x(\alpha)a$ for $\alpha \in \A_{E(sK_0)}^\times$ mapping to $\sigma \in \Gamma_{E(sK_0)}^{\mr{ab}}$ under the reciprocity map. Lemma 2.2 of \cite{ullmo-yafaev:generalized} and the above observation about $M_x$ shows that $q \in \mr{Cent}_G(M_x)(\QQ)= T_x(\QQ)$. Thus $r_x(\alpha)=q \cdot (ak_0(\sigma)a^{-1}) \in T_x(\QQ) \cdot (aK_0a^{-1} \cap T_x(\A_f))$. The proof of \cite[Lemma 2.1]{ullmo-yafaev:generalized} shows that $T_x(\QQ) \cap aK_0a^{-1}=\{1\}$, using that $T_x= \mr{Cent}_G(M_x)$ and that $K_0$ is neat. In particular, the decomposition $r_x(\alpha)=q ak_0(\sigma)a^{-1}$ as above is unique (given $\alpha$; choosing a different $\alpha$ mapping to $\sigma$ will change $q \in T_x(\QQ)$).

By definition of the integral model $\mc{S}_{K_0, s}$ and \cite[Theorem 4.1]{pila-shankar-tsimerman:andre-oort}, $sK_0=[x, a]_{K_0}$ extends to a point of $\mc{S}_{K_0, s}(\mc{O}_{E(sK_0)}[1/N])$,\footnote{Note that $\mc{S}_{K_0, s}(\mc{O}_{E(sK_0)}[1/N])= \mc{S}_{K_0, s}(E(sK_0)) \cap \mc{S}_{K_0, s}(\overline{\ZZ}[1/N])$ inside $\mc{S}_{K_0, s}(\ol{\QQ})$, the inclusion making sense because $\mc{S}_{K_0, s}$ is separated. Indeed, for any finite Galois extension $L/E(sK_0)$, $\Spec(\mc{O}_L[1/N]) \to \Spec( \mc{O}_{E(sK_0)}[1/N])$ is a categorical quotient by the group $\mr{Gal}(L/E(sK_0))$ of automorphisms, so $\mc{S}_{K_0, s}(\mc{O}_L[1/N])^{\mr{Gal}(L/E(sK_0))}= \mc{S}_{K_0, s}(\mc{O}_{E(sK_0)}[1/N])$.} so for $v \nmid N\ell$, $\rho_{\ell, sK_0}$ is unramified at $v$. For a uniformizer $\varpi_v \in E(sK_0)_v$ mapping to a Frobenius element $\Frob_v \in \Gamma_{E(sK_0)}^{\mr{ab}}$ we have
\[
r_x(\varpi_v)= q a k_0(\Frob_v) a^{-1}
\]
for a unique $q \in T_x(\QQ)$. Considering this identity in the $\ell$ component we see 
\begin{equation}\label{CMfrob}
a_{\ell}^{-1} q^{-1} a_{\ell}= k_{0, \ell}(\Frob_v).
\end{equation}
Considering the identity in the $p$ component, where $v \vert p$, we see
\begin{equation}\label{locallyalg}
q^{-1} N_{E(sK_0)_v/\QQ_p}(\mu_x(\varpi_v)))= a_p k_{0, p}(\Frob_v) a_p^{-1}.
\end{equation}
The right-hand side lies in the \textit{compact} open subgroup $T_x(\QQ_p) \cap a_p K_{0, p} a_p^{-1}$ of $T_x(\QQ_p)$, and in particular for all cocharacters $\chi \in X^{\bullet}(T_x)$, $\chi(q^{-1} N_{E(sK_0)/\QQ_p}(\mu_x(\varpi_v)))$ has trivial $p$-adic valuation. For each $i \in I$, we project Equation \eqref{locallyalg} to the $G_i$ component of $G^{\mr{ad}}$, and by looking at $p$-adic valuations we see that the image in $T_{x, i}(\QQ)$ of $q \in T_x(\QQ)$ has infinite order---and thus (Equation \eqref{CMfrob}) the image in $G_i(\QQ_{\ell})$ of $k_{0, \ell}(\Frob_v)$ has infinite order---if for some root $\chi$ of $(G_i, T_{x, i})$ (or non-trivial character of $T_{x, i}$, $\chi(N_{E(sK_0)_v/\QQ_p}(\mu_x(\varpi_v)))$ (which depends only on the projection of $\mu_x(\varpi_v)$ to $G_i$) has non-zero $p$-adic valuation.

We will now use our additional assumptions (2) and (3). The torus $T_{x, \QQ_p}$ splits over an unramified extension of $\QQ_p$, so $\mu_x$ is defined over such an extension, and $E(x)/\QQ$ is unramified over $p$. In fact $E(sK_0)/E(x)$ is also unramified at all primes above $p$. Indeed, the canonical model characterization (Equation \eqref{reciprocity}) applied to $\alpha \in \mc{O}_{E(x)_v}^\times$, for $v \vert p$, shows that the inertia group $I_{E(x)_v}$ fixes $sK_0 = [x, a]_{K_0}$ once the (unique) maximal bounded (or compact) subgroup $T_x(\QQ_p)^0$ of $T_x(\QQ_p)$ is contained in $a_p K_0 a_p^{-1}$. By \cite[Lemma A.5.2]{getz-hahn:automorphicbook}, $\underline{G}_{\ZZ_p}(\ZZ_p) \cap T_x(\QQ_p)$ is equal to $T_x(\QQ_p)^0$, which is thus under our assumptions contained in $a_p K_0 a_p^{-1}$.

For $p$ as in the previous paragraph, we may take $\varpi_v=p$ to be a uniformizer of $E(sK_0)_v$, for $v \vert p$. To show that $k_{0, \ell}(\Frob_v)$ has infinite order in $G_i(\QQ_{\ell})$, we compute
\[
N_{E(sK_0)_v/\QQ_p}(\mu_x(p))= \prod_{\sigma \colon E(sK_0)_v \to \ol{\QQ}_p} \sigma(\mu_x(p))= \prod_{\sigma \colon E(sK_0)_v \to \ol{\QQ}_p} {}^{\sigma}\mu_x(\sigma(p))= (\sum_{\sigma \colon E(sK_0)_v \to \ol{\QQ}_p} {}^{\sigma}\mu_x)(p).
\]
(We write ${}^\sigma \mu_x$ for the action on the cocharacter $\mu_x$ and $\sigma(\mu_x(p))$ for the action on the value $\mu_x(p)$.) By construction $\mu_{x, \iota}$ is positive with respect to a $\QQ_p$-rational Borel subgroup $B_x$ of $G_{\QQ_p}$, and thus its projection to $T_{x, i}$ is positive with respect to each projection $B_{x, i} \subset G_i$ (a $\QQ_p$-rational Borel) of $B_x$: note that ``positive" here means strictly positive, since $\mu_x$ is non-trivial in each $G_i$. Our embedding $\iota$ induces one of the embeddings $\sigma$ in the above product, so the $G_i$-projection of $\sum_{\sigma \colon E(sK_0)_v \to \ol{\QQ}_p} {}^\sigma \mu_x$ is also positive with respect to the $\QQ_p$-rational Borel $B_{x, i}$. In particular, for some $B_{x, i}$-positive root $\chi \in X^\bullet(T_{x, i})$, $\chi \circ \left( \sum_{\sigma \colon E(sK_0)_v \to \ol{\QQ}_p} {}^\sigma \mu_x \right) \in \Hom(\mathbb{G}_m, \mathbb{G}_m)$ is $z \mapsto z^n$ for some $n>0$, and 
\[
\chi(N_{E(sK_0)_v/\QQ_p}(\mu_x(p)))= p^n
\]
has positive $p$-adic valuation. We conclude that for each $i$ the projection to $G_i(\QQ_{\ell})$ of $k_{0, \ell}(\Frob_v)$ has infinite order.
\end{proof}
We can now assemble the ingredients into the proof of our main theorem:
\begin{thm}\label{mainthm}
Let $(G, X)$ be a Shimura datum such that $Z_G(\QQ)$ is a discrete subgroup of $Z_G(\A_f)$, and thus for every neat compact open subgroup $K_0 \subset G(\A_f)$ and $s \in \Sh(G, X)(\CC)$, we have the canonical $\ell$-adic local systems
\[
\rho_{K_0, s, \ell} \colon \pi_1(S_{K_0, s}, \bar{s}) \to K_{0, \ell}
\]
defined in \S \ref{localsysdef} with adjoint projection denoted $\rho_{\ell}$.

Assume:
\begin{equation}
    \text{$(\ast)$ For every $\QQ$-simple factor $H$ of $G^{\mr{ad}}$, $\mr{rk}_{\RR} (H_{\RR}) \geq 2$.} 
\end{equation}
Consequently, as described in \S \ref{integralmodels}, for all $\ell$, $\rho_{\ell}$ extends to the integral model $\mc{S}_{K_0, s}[1/\ell] \to \Spec(\mc{O}_{E_{K_0, s}}[1/N\ell])$,
\[
\rho_{\ell} \colon \pi_1(\mc{S}_{K_0, s}[1/\ell], \bar{s}) \to G^{\mr{ad}}(\QQ_{\ell}).
\]

Then there exists an integer $\widetilde{N}$ such that for all primes $\ell, \ell'$, $\rho_{\ell}$ and $\rho_{\ell'}$ are companions on $\mc{S}_{K_0, s}[1/\widetilde{N} \ell \ell']$ in the sense that for all closed points $x \in \mc{S}_{K_0, s}[1/\widetilde{N} \ell \ell']$, the conjugacy classes of $\rho_{\ell}(\Frob_x)$ and $\rho_{\ell'}(\Frob_x)$ are $\QQ$-rational and are represented by the same element of $[G^{\mr{ad}}\git G^{\mr{ad}}](\QQ)$. 
\end{thm}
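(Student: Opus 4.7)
The plan is to reduce Theorem~\ref{mainthm} to the application of Proposition~\ref{symbreak} fibre-by-fibre: Corollary~\ref{svnearcompatible} already gives, for each closed point $v$ of $\mc{S}_{K_0,s}[1/\widetilde N\ell\ell']$, an automorphism $\tau = \tau(\ell,\ell',v)$ of $G^{\mr{ad}}_{\ol{\QQ}_{\ell'}}$ realising $\rho_{\ell,v}$ and $\tau(\rho_{\ell',v})$ as companions, and the entire remaining task is to promote $\tau$ to an inner automorphism. Once that is done for every such $v$, the semisimple conjugacy classes of $\rho_\ell(\Frob_x)$ and $\rho_{\ell'}(\Frob_x)$ agree in $[G^{\mr{ad}}\git G^{\mr{ad}}]$ at every closed $x$, and their $\QQ$-rationality follows from Remark~\ref{easycompanion} (using Zariski density of the image). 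Before the fibrewise argument, I would reduce to the case that $G^{\mr{ad}}$ is $\QQ$-simple by factoring $G^{\mr{ad}}= \prod_i G_i$ and proving the theorem for each $G_i$ separately, which is legitimate because the construction of $\mc{S}_{K_0,s}$ and the local systems in \S\ref{localsysdef}, as well as Propositions~\ref{svsuperrig} and \ref{nearcompatible}, are compatible with such a product decomposition; this is also the setting throughout \S\ref{integralmodels}.

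For a fixed pair $(\ell,\ell')$ and a fixed $v \mid p$ of $\mc{O}_{E_{K_0,s}}[1/\widetilde{N}\ell\ell']$, I would begin by enlarging $\widetilde{N}$ so that $G_{\QQ_p}$ is unramified, $K_0 \supset \ul{G}_{\ZZ_p}(\ZZ_p)$ for some reductive model $\ul{G}_{\ZZ_p}$, and $a_p \in \ul{G}_{\ZZ_p}(\ZZ_p)$ for the chosen representative $s = [x,a]$; only finitely many bad primes appear in these conditions. I would then apply Proposition~\ref{findspecialpoint} to $p$ and an embedding $\iota_p \colon \ol{\QQ} \to \ol{\QQ}_p$ inducing $v$, obtaining a special subdatum $(T',x')$ with $x' \in X^+$, with irreducible pairwise non-isomorphic factors $T'_i \subset G_i$, with $T'_{\QQ_p}$ unramified, and with Hodge cocharacter $\mu'$ that is $\iota_p$-positive for some $\QQ_p$-rational Borel. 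The point $y := [x',a] \in \Sh(\CC)$ then lies in $S_{K_0,s}$, and by \cite[Theorem 4.1]{pila-shankar-tsimerman:andre-oort} its image $yK_0$ extends uniquely to an element of $\mc{S}_{K_0,s}(\mc{O}_{C_y}[1/N_y])$ for $C_y = E(yK_0)$ and some $N_y$ which, after one further enlargement of $\widetilde{N}$, can be taken coprime to $p$. This $y$ is the horizontal section that will break the symmetry.

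To verify Assumption~\ref{horizontal}(1) at any $v_y \mid v$, I would invoke Lemma~\ref{specialgalois}: the Frobenius image $k_0(\Frob_{v_y}) \in K_0$ arises from a uniquely determined decomposition $r_{x'}(\varpi_{v_y}) = q\cdot a k_0(\Frob_{v_y}) a^{-1}$ with $q \in T'(\QQ)$ \emph{independent of $\ell$}, so $\rho_{\ell,y}(\Frob_{v_y})$ and $\rho_{\ell',y}(\Frob_{v_y})$ are $G$-conjugate to the same $\QQ$-rational element $a^{-1}q^{-1}a$ (after projecting to $G^{\mr{ad}}$), and in particular their semisimple parts define a common $\QQ$-rational conjugacy class in $[G^{\mr{ad}}\git G^{\mr{ad}}](\QQ)$. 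To verify Assumption~\ref{horizontal}(2), the integrality preparations arranged in the previous paragraph are exactly the hypotheses of Lemma~\ref{infiniteorder}, which yields that the projection of $\rho_{\ell,y}(\Frob_{v_y})$ to each simple factor $G_i(\QQ_\ell)$ has infinite order. I would then argue that no non-trivial element of $\Out(G^{\mr{ad}}_{\ol{\QQ}_\ell})$ fixes the corresponding semisimple conjugacy class: decomposing $G^{\mr{ad}}_{\ol{\QQ}_\ell} \cong \prod_{\sigma\colon F\to\ol{\QQ}_\ell} G^s_\sigma$, the outer automorphism group is generated by Dynkin diagram automorphisms in each factor and by permutations of factors; the former are excluded because an infinite-order element of an irreducible torus cannot be conjugacy-stable under a non-trivial diagram symmetry without contradicting irreducibility, while the latter are excluded using the non-isomorphism of the $T'_i$ as $\QQ$-tori, which distinguishes the per-factor Frobenius components via the Galois representations $\rho_{T'_i}$ on character groups (a Chebotarev/Brauer--Nesbitt-type argument analogous to the concluding step of Proposition~\ref{findspecialpoint}).

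The main obstacle is precisely the last step: translating the global rigidity of the $T'_i$ as $\QQ$-tori (their irreducibility and pairwise non-isomorphism) into the required pointwise rigidity of $\rho_{\ell,y}(\Frob_{v_y})$ against all factor-permuting outer automorphisms over the algebraically closed coefficient field $\ol{\QQ}_\ell$, where the tori cease to be distinguishable by their coarse invariants. Once this is in hand, Proposition~\ref{symbreak} applies at every $v$, and Theorem~\ref{mainthm} follows.
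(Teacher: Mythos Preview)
Your outline matches the paper's proof almost exactly, and you have correctly located the one substantive step that remains: verifying Assumption~\ref{horizontal}(2). However, your proposed resolution of that step---decomposing $\Out(G^{\mr{ad}}_{\ol{\QQ}_\ell})$ into diagram automorphisms and factor permutations and ruling each out separately---does not work as stated, and the paper's argument is different in kind.

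The obstacle you yourself name is real: over $\ol{\QQ}_\ell$ the absolutely simple factors $G^s_\sigma$ are all isomorphic, so the $\QQ$-level hypotheses on the $T'_i$ say nothing directly about an individual element of $T'(\ol{\QQ}_\ell)$. Your claim that ``an infinite-order element of an irreducible torus cannot be conjugacy-stable under a non-trivial diagram symmetry without contradicting irreducibility'' is not an argument---irreducibility concerns the $\Gamma_\QQ$-module $X^\bullet(T')_\QQ$, not individual torus elements, and nothing in it prevents a diagram automorphism from fixing one particular semisimple class. The permutation case has the same defect (and after your reduction to $\QQ$-simple $G^{\mr{ad}}$ there is only one $T'_i$, so the non-isomorphism condition is vacuous at the $\QQ$-level you are invoking).

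The paper bypasses the decomposition of $\Out$ entirely. The key observation (Equation~\eqref{CMfrob}) is that $r_\ell(\Frob_w):=a_\ell\,\rho_{\ell}|_{sK_0}(\Frob_w)\,a_\ell^{-1}$ actually lies in $T_x^{\mr{ad}}(\QQ)$, not merely in $T_x^{\mr{ad}}(\QQ_\ell)$. Any $\tau\in\Aut(G^{\mr{ad}}_{\ol{\QQ}_\ell})$ fixing this element fixes pointwise the $\QQ$-Zariski closure $\overline{\langle r_\ell(\Frob_w)\rangle}^{\mr{Zar}}$ (after base change to $\ol{\QQ}_\ell$). By Lemma~\ref{infiniteorder} this closure surjects onto each irreducible $T_{x,i}$, and since the $T_{x,i}$ are pairwise non-isomorphic $\QQ$-tori, the closure equals all of $T_x^{\mr{ad}}$. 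Thus $\tau$ acts trivially on a maximal torus of $G^{\mr{ad}}_{\ol{\QQ}_\ell}$, hence trivially on the root datum, and is therefore inner by the isomorphism theorem for reductive groups. The idea you are missing is to pass from the single Frobenius element to the $\QQ$-subtorus it generates; the $\QQ$-hypotheses on $T_x^{\mr{ad}}$ then apply directly.

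Two smaller corrections. First, Remark~\ref{easycompanion} gives only $\ol{\QQ}$-rationality of the common semisimple class; the paper upgrades this to $\QQ$-rationality by a separate \v{C}ebotarev step (the class lies in $[G^{\mr{ad}}\git G^{\mr{ad}}](\QQ_\ell)$ for every $\ell$ not below $v$, hence is fixed by almost all decomposition groups, hence lies in $[G^{\mr{ad}}\git G^{\mr{ad}}](\QQ)$). Second, the paper takes the special point to \emph{be} the base-point $s=[x,a]$ rather than introducing a separate horizontal section $y$; this is only a bookkeeping difference (changing base-points conjugates $\rho_\ell$), but it lets Lemma~\ref{specialgalois} be invoked verbatim.
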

\begin{proof}
$G^{\mr{ad}}$ is a product of $\QQ$-simple factors, and it suffices to prove the asserted compatibility for each $\QQ$-simple factor. Fix a set of representatives $\mc{C} \subset G(\A_f)$ of $G(\QQ) \backslash G(\A_f)/K_0$, and fix a smooth reductive model $\underline{G} \to \Spec(\ZZ[1/N_0])$ such that for all $p \nmid N_0$, $G_{\QQ_p}$ is unramified. Let $N_1$ be the product of all primes $p$ such that for one of the finitely many elements $a \in \mc{C}$, the $p$-component $a_p$ does not lie in $\underline{G}_{\ZZ_p}(\ZZ_p)$, and let $N_2$ be the product of all primes $p$ such that $K_0$ does not contain $\underline{G}_{\ZZ_p}(\ZZ_p)$. Finally set $\widetilde{N}=N N_0 N_1 N_2$.  

Fix a connected component $X^+$ of $X$. Each $a \in \mc{C}$ then determines as in Equation \eqref{components} a geometrically connected component of $\Sh_{K_0}$, and for any complex geometric point $\bar{s}_a= [x_a, a]$ ($x_a \in X^+$) of each such component, Proposition \ref{svsuperrig} and the discussion following produces the local systems $\rho_{\ell}$ on $\mc{S}_{K_0, s_a}[1/\ell]$. We will check the claimed compatibility after restriction to $\mc{S}_{K_0, s_a}[1/\widetilde{N} \ell \ell']$. Changing the base-point conjugates $\rho_{\ell}$, and in the next paragraph we will specify convenient base-points for each component.

Fix a prime $v$ of $\mc{O}_{E_{K_0, s}}[1/\widetilde{N}\ell \ell']$, above some rational prime $p$, and fix an embedding $\iota_p \colon \ol{\QQ} \to \ol{\QQ}_p$ inducing $v$, as in Lemma \ref{infiniteorder}. It suffices to prove, for each such $v$ independently, that the companion property holds for the restrictions $\rho_{\ell, v}$ and $\rho_{\ell', v}$ to $\mc{S}_{\kappa(v)}:= \mc{S}_{K_0, s} \times_{\Spec(\mc{O}_{E_{K_0, s}})} \Spec(\kappa(v))$. Choose a special subdatum $(T_x, x) \subset (G, X)$ as in Proposition \ref{findspecialpoint}, so $x \in X^+$, $T_x^{\mr{ad}}= \prod_{i \in I} T_{x, i}$ is a product of maximal tori $T_{x, i} \subset G_i$ with the $T_{x, i}$ all irreducible and non-isomorphic as $\QQ$-tori, and the Hodge cocharacter $\mu_x \colon \mathbb{G}_{m, \ol{\QQ}} \to T_{x, \ol{\QQ}}$ (the canonical $\ol{\QQ} \subset \CC$ descent of the complex cocharacter) defines $\mu_{x, \iota_p} \in X_{\bullet}(T_{x, \QQ_p})$ that is positive with respect to a $\QQ_p$-rational Borel subgroup of $G_{\QQ_p}$ containing the unramified torus $T_{x, \QQ_p}$. We now specify our base-point $s_a$ to be $\bar{s}= [x, a] \in \Sh(\CC)$, inducing also $sK_0 \in \mc{S}_{K_0, s}(\mc{O}_{E(sK_0)}[1/N])$ (see the proof of Lemma \ref{infiniteorder} for the integrality). Let $w \vert v$ be the place of $\mc{O}_{E(sK_0)}$ above $v$ such that $\iota_p$ also induces $w$, and consider the restriction of our local system $\rho_{\ell}$ along the map $s_w$ in the commutative diagram (the left vertical arrow arising by specializing $sK_0$)
\begin{center}
    \begin{tikzcd}
        \Spec(\kappa(w)) \arrow[d] \arrow[rr] \arrow[rrd, "s_w"] & & \Spec(\mc{O}_{E(sK_0)}[1/\widetilde{N}\ell \ell']) \arrow[d, "sK_0"] \\
        \mc{S}_{\kappa(w)} \arrow[r] & \mc{S}_{\kappa(v)} \arrow[r] & \mc{S}_{K_0, s}[1/\widetilde{N} \ell \ell'].
    \end{tikzcd}
\end{center}
The Galois representations $\rho_{\ell}|_{sK_0}$ and $\rho_{\ell'}|_{sK_0}$ are compatible, as is proven easily using Equation \eqref{reciprocity}: see Equation \eqref{CMfrob} (and for a classical example see \cite[\S II.2.8]{serre:ladic}). By Proposition \ref{symbreak}, to show the compatibility of $\rho_{\ell, v}$ and $\rho_{\ell', v}$ it suffices to verify that Assumption \ref{horizontal} holds, which, given the compatibility after restriction to $sK_0$, amounts to checking that $\rho_{\ell}|_{sK_0}(\Frob_w)$ is not fixed by any outer automorphism of $G^{\mr{ad}}_{\ol{\QQ}_{\ell}}$. 

By Lemma \ref{infiniteorder} and our requirement $v \nmid \widetilde{N}\ell \ell'$, $\rho_{\ell}|_{sK_0}(\Frob_w) \in K_0 \cap (a_{\ell}^{-1} T_x^{\mr{ad}}(\QQ_{\ell}) a_{\ell})$ has infinite order in each $G_i(\QQ_{\ell})$. Define the representation $r_{\ell}$ by 
\[
r_{\ell}=a_{\ell} \rho_{\ell}|_{sK_0} a_{\ell}^{-1} \colon \pi_1(E(sK_0), \bar{s}) \to T_x^{\mr{ad}}(\QQ_{\ell}).
\]

Note that, per Equation \eqref{CMfrob} (where what is now denoted $w$ was denoted $v$), $r_{\ell}(\Frob_w)$ is in $T_x^{\mr{ad}}(\QQ)$. If $r_\ell(\Frob_w)$ were fixed by an automorphism $\tau$ of $G^{\mr{ad}}_{\ol{\QQ}_{\ell}}$, then $\tau$ would also fix point-wise the $\QQ$-Zariski closure $\ol{\langle r_{\ell}(\Frob_w) \rangle}^{\mr{Zar}}$ of the subgroup generated by $r_{\ell}(\Frob_w) \in T_x^{\mr{ad}}(\QQ)$. In each $G_i$-projection, $r_{\ell}(\Frob_w)$ has infinite order, so $\ol{\langle r_{\ell}(\Frob_w) \rangle}^{\mr{Zar}}$ must surject onto each of the irreducible tori $T_{x, i}$.\footnote{We have used in this argument that $\ol{\langle r_{\ell}(\Frob_w) \rangle}^{\mr{Zar}}$ is the corresponding Zariski-closure in $G^{\mr{ad}}$ (over $\QQ$), and that its base-change to $\ol{\QQ}_{\ell}$ is the Zariski-closure in $G^{\mr{ad}}_{\ol{\QQ}_{\ell}}$. Indeed, we only know that $\tau$ is an automorphism of $G^{\mr{ad}}_{\ol{\QQ}_{\ell}}$, so we first conclude that $\tau$ fixes the Zariski-closure (over $\ol{\QQ}_{\ell}$) in $G^{\mr{ad}}_{\ol{\QQ}_{\ell}}$ of the subgroup $\langle r_{\ell}(\mr{Frob}_w) \rangle$, and then apply this base-change property of Zariski-closure.} Arguing as in the first paragraph of Lemma \ref{infiniteorder} using that the tori $T_{x, i}$ are pairwise non-isomorphic, we deduce that $\ol{\langle r_{\ell}(\Frob_w) \rangle}^{\mr{Zar}}$ is equal to $T_x^{\mr{ad}}$. Thus $\tau$ is an automorphism of $G^\mr{ad}_{\ol{\QQ}_\ell}$ inducing the identity on $T^{\mr{ad}}_{x, \ol{\QQ}_\ell}$. It consequently induces the identity automorphism of the root datum $R(G^{\mr{ad}}_{\ol{\QQ}_\ell}, T^{\mr{ad}}_{x, \ol{\QQ}_\ell})$ and hence by the isomorphism theorem for reductive groups (\cite[Theorem 1.3.15]{conrad:luminy}) is conjugation by an element of $T^{\mr{ad}}_x(\ol{\QQ}_\ell)$. In particular $\tau$ is inner. It immediately follows that $\rho_{\ell}|_{sK_0}(\Frob_w)$ is likewise fixed by no non-trivial outer automorphism of $G^{\mr{ad}}_{\ol{\QQ}_{\ell}}$, so we have verified the second hypothesis of Assumption \ref{horizontal} and conclude by Proposition \ref{symbreak} that for all $v \nmid \widetilde{N}\ell \ell'$, $\rho_{\ell}|_{\mc{S}_{\kappa(v)}}$ and $\rho_{\ell'}|_{\mc{S}_{\kappa(v)}}$ are companions.

Thus for each $v \nmid \widetilde{N}$ and closed point $x \in \mc{S}_{\kappa(v)}$ we obtain a semisimple conjugacy class $\alpha_x \in [G^{\mr{ad}}\git G^{\mr{ad}}](\ol{\QQ})$ given by the common underlying semisimple class of the $\rho_{\ell}(\mr{Frob}_x)$ (whenever $\ell$ is not below $v$), and such that (again when $v \nmid \widetilde{N}\ell$) $\alpha_x$ lies in $[G^{\mr{ad}}\git G^{\mr{ad}}](\QQ_{\ell})$ for any embedding $\ol{\QQ} \to \ol{\QQ}_{\ell}$. Since $\alpha_x$ is defined over some finite extension $E$ of $\QQ$ and is invariant under decomposition groups above $\ell$ for almost all $\ell$, the \v{C}ebotarev density theorem implies $\alpha_x$ lies in $[G^{\mr{ad}}\git G^{\mr{ad}}](\QQ)$. (This rationality argument appears in \cite[Theorem 6.1.4]{kisin-zhou}.)
\end{proof}
\begin{rmk}
    It is perhaps clarifying to add the following to the discussion of $r_{\ell}$ in the last proof. The proof of Lemma \ref{infiniteorder} up to Equation \eqref{CMfrob} applies for any prime $w' \in \Spec(\mc{O}_{E(sK_0)}[1/N \ell])$ to show that for all such $w'$, $r_{\ell}(\Frob_{w'}) \in T_x^{\mr{ad}}(\QQ)$ (the later assertions of the lemma rely on the other assumptions). Thus $r_{\ell}$ is an abelian $\QQ$-rational $\ell$-adic representation, necessarily Hodge-Tate (as verified by direct computation using Equation \eqref{locallyalg} or by the theorems of Serre and Waldschmidt), in the sense of Serre's theory (\cite{serre:ladic}). We check that it arises from a (unique) $\QQ$-algebraic representation $r \colon \mathfrak{S}=\mathfrak{S}_{E(sK_0), \mf{m}} \to T_x^{\mr{ad}}$ of the Serre group $\mathfrak{S}_{E(sK_0), \mf{m}}$ for the number field $E(sK_0)$ and some modulus $\mf{m}$: this is the diagonalizable $\QQ$-group denoted $S_{\mf{m}}$ in \cite[\S II-2]{serre:ladic}, with the number field $K$ of \textit{loc. cit.} our $E(sK_0)$). Indeed, for every $\QQ$-rational representation $s_V \colon T_x^{\mr{ad}} \to \Aut(V)$ ($V$ a finite-dimensional $\QQ$-vector space), there is an associated abelian $\ell$-adic $\QQ$-rational representation $s_V \circ r_{\ell}$ of modulus $\mf{m}$, which by \cite[III.2.3 Theorem 2]{serre:ladic} arises from a unique homomorphism $\mathfrak{S} \to \Aut(V)$. We obtain a tensor functor $\mr{Rep}_{\QQ}(T_x^{\mr{ad}}) \to \mr{Rep}_{\QQ}(\mf{S})$, which induces the desired homomorphism $r \colon \mf{S} \to T_x^{\mr{ad}}$.\footnote{The cited theorem shows that there is a $\QQ$-structure $V_0$ on $V_{\QQ_{\ell}}$ such that $s_V \circ r_{\ell}$ arises from a homomorphism $\mf{S} \to \Aut_{V_0}$; but one checks that one can take $V_0=V$ itself, and that the resulting $\QQ$-group homomorphism is unique once the $\QQ$-structure is specified. We then obtain a tensor functor using the uniqueness. Alternatively, here is a more concrete argument to construct the homomorphism $r$. We first define a homomorphism $\mathfrak{S}_{\ol{\QQ}} \to T^{\mr{ad}}_{x, \ol{\QQ}}$ by defining a group homomorphism $X^\bullet(T^{\mr{ad}}_x) \to X^{\bullet}(\mf{S})$ as follows. Letting $L$ be a number field, Galois over $\QQ$, splitting $T_x^{\mr{ad}}$, any $\chi \in X^{\bullet}(T_x^{\mr{ad}})$ is defined over $L$, and for any place $\lambda \vert \ell$ of $L$ we have $\chi_{\lambda} := \chi \circ r_{\ell} \colon \pi_1(E(sK_0)) \to L_{\lambda}^\times$, with the property that for all $v \nmid \mf{m}$, $\Frob_v$ maps to an element of $L^\times$. By Serre's theory (\cite[III.2.3-III.2.4]{serre:ladic}), there is a unique character $r(\chi) \colon \mf{S}_L \to \mathbb{G}_{m, L}$ such that $\chi_{\lambda}$ equals the composite $\pi_1(E(sK_0)) \xrightarrow{\varepsilon_{\ell}} \mf{S}(\QQ_{\ell}) \xrightarrow{r(\chi)} L_{\lambda}^\times$, where $\varepsilon_{\ell}$ is the canonical homomorphism of \cite[\S II.2.3]{serre:ladic}. The assignment $\chi \mapsto r(\chi)$ is clearly a group homomorphism. Thus we obtain $r \colon \mf{S}_L \to T_{x, L}^{\mr{ad}}$ such that for all $\ell$, $r_{\ell}$ is the composite 
\[
\pi_1(E(sK_0)) \xrightarrow{\varepsilon_{\ell}} \mf{S}(\QQ_{\ell}) \to \mf{S}(L_{\lambda}) \xrightarrow{r} T_x^{\mr{ad}}(L_{\lambda}).
\]

To show that $r$ is defined over $\QQ$, we observe that for all $\ell$ and $\lambda \vert \ell$, on $L_{\lambda}$-points $r$ maps to $\mf{S}(\QQ_{\ell})$ to $T^{\mr{ad}}_x(\QQ_{\ell})$: indeed the image of $\varepsilon_{\ell}$ is Zariski-dense in $\mf{S}_{\QQ_{\ell}}$ (\cite[II.2.3, Remark]{serre:ladic}), and we know that each $r_{\ell}$ lands in $T_x^{\mr{ad}}(\QQ_{\ell})$. This implies $r$ is $\mr{Gal}(L_{\lambda}/\QQ_{\ell})$-invariant for all $\ell, \lambda$, and thus (by \v{C}ebotarev's theorem) $r$ is $\mr{Gal}(L/\QQ)$-invariant. Being a morphism of groups of multiplicative type, it is defined over $\QQ$.}  
For our specified prime $w$, $r(\varpi_w)$ ($\varpi_w$ a uniformizer at $w$) is the element $r_{\ell}(\Frob_w) \in T_x^{\mr{ad}}(\QQ)$, and so the argument in Theorem \ref{mainthm} shows $r \colon \mf{S} \to T_x^{\mr{ad}}$ is surjective. 
\end{rmk}

\begin{cor}\label{specialthm}
Continue in the setting of Theorem \ref{mainthm}. Let $F \subset \ol{\QQ}$ be a number field containing the reflex field $E(G, X)$, and let $y \in \mr{Sh}_{K_0}(F)$ be an $F$-rational point. There exists a geometrically-connected component $S_{K_0, s}$ as in the theorem whose field of definition $E_{K_0, s}$ is contained in $F$ such that $y$ lies in $S_{K_0, s}(F)$, and thus in $\mc{S}_{K_0, s}(\mc{O}_F[1/N(y)]$ for some integer $N(y)$ divisible by the integer $N$ of Theorem \ref{mainthm}.

For all $\ell$, we consider the restrictions 
\[
\rho_{\ell, y} \colon \pi_1(\Spec(\mc{O}_F[1/(N(y) \ell)]), \bar{s}) \to G^{\mr{ad}}(\QQ_{\ell})
\]
of $\rho_{\ell}$ along $y$. Then for all finite places $v \nmid \widetilde{N}N(y)\ell$ of $F$, $\rho_{\ell, y}(\mr{Frob}_v)$ has underlying semisimple conjugacy class that is $\QQ$-rational and independent of $\ell$, as an element of $[G^{\mr{ad}}\git G^{\mr{ad}}](\QQ)$.
\end{cor}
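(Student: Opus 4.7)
The plan is to reduce this statement directly to Theorem \ref{mainthm} by spreading out $y$ to an integral section and transporting vertical Frobenii along this section to horizontal Frobenii. First I would locate $y$ in a single geometrically-connected component: since the components of $\mr{Sh}_{K_0} \otimes_E \ol{\QQ}$ are permuted by $\Gamma_E$, and since $y \in \mr{Sh}_{K_0}(F)$ with $F \supset E(G,X)$, the component of $\mr{Sh}_{K_0, \ol{\QQ}}$ containing $y$ is $\Gamma_F$-stable. Thus its field of definition $E_{K_0, s}$ (for an appropriate choice of basepoint $s$ with $sK_0$ in this component) lies in $F$, and $y$ factors through $S_{K_0, s}(F)$. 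After enlarging $N$ to some integer $N(y)$, we can spread $y$ out: because $\ol{\mc{S}}_{K_0, s}$ is proper over $\mc{O}_{E_{K_0, s}}[1/N]$, the valuative criterion extends $y$ to a map $\Spec(\mc{O}_F[1/N(y)]) \to \ol{\mc{S}}_{K_0, s}$, and after inverting the finitely many primes where this extension hits the boundary divisor we may assume $y \in \mc{S}_{K_0, s}(\mc{O}_F[1/N(y)])$ with $N \mid N(y)$.

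Next, for every prime $v$ of $\mc{O}_F[1/N(y)]$ with $v \nmid \ell$, the section $y$ specializes $v$ to a closed point $y(v) \in \mc{S}_{K_0, s}[1/\ell]$ lying over the image of $v$ in $\Spec(\mc{O}_{E_{K_0, s}}[1/N\ell])$, with $\kappa(y(v)) \subset \kappa(v)$ a finite extension. By functoriality of $\pi_1$ together with the well-known fact that the induced map on fundamental groups sends a Frobenius at $v$ to a Frobenius at $y(v)$ (raised to the residue degree $[\kappa(v) : \kappa(y(v))]$), the class $\rho_{\ell, y}(\mr{Frob}_v) = (y^*\rho_\ell)(\mr{Frob}_v)$ is conjugate in $G^{\mr{ad}}(\QQ_\ell)$ to $\rho_\ell(\mr{Frob}_{y(v)})^{[\kappa(v):\kappa(y(v))]}$.

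Now for any $v \nmid \widetilde{N} N(y) \ell \ell'$, applying Theorem \ref{mainthm} to the closed point $y(v) \in \mc{S}_{K_0, s}[1/\widetilde{N}\ell \ell']$ yields an element $\alpha_{y(v)} \in [G^{\mr{ad}}\git G^{\mr{ad}}](\QQ)$ which is the common semisimple conjugacy class underlying $\rho_\ell(\mr{Frob}_{y(v)})$ and $\rho_{\ell'}(\mr{Frob}_{y(v)})$. Taking the $[\kappa(v):\kappa(y(v))]$-th power in $[G^{\mr{ad}} \git G^{\mr{ad}}]$ is a $\QQ$-morphism preserving $\QQ$-rationality, so the semisimple class underlying $\rho_{\ell, y}(\mr{Frob}_v)$ is $\QQ$-rational and independent of $\ell$, as desired. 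To conclude independence across all $\ell$ simultaneously (not just pairs $\ell, \ell'$), one fixes any auxiliary $\ell_0$ and chains the pairwise equalities.

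There is essentially no obstacle beyond carefully verifying that $E_{K_0, s} \subset F$ and that the conjugacy class operation ``raise to the residue-degree power'' descends correctly on the GIT quotient; both are routine. The substantive content is contained entirely in Theorem \ref{mainthm}.
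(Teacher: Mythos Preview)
Your proposal is correct and follows essentially the same approach as the paper's proof, which is very terse: the paper identifies the component $S_{K_0,s}$ containing $y$ (via the same $\Gamma_F$-invariance argument you give) and then simply says ``the conclusion of the Corollary then follows readily from Theorem \ref{mainthm}.'' You have spelled out the spreading-out step and the residue-degree power argument that the paper leaves implicit; this is all routine, as you note, and the paper (see also Remark \ref{mainthmintrormk}(1)) treats it as such.
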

\begin{proof}
The composite $\Spec(\ol{\QQ}) \to \Spec(F) \xrightarrow{y} \mr{Sh}_{K_0}$ defines an element of $\mr{Sh}_{K_0, \ol{\QQ}}(\ol{\QQ})= \bigsqcup S_{K_0, s, \ol{\QQ}}(\ol{\QQ})$, where the disjoint union is taken over $\pi_0(\mr{Sh}_{K_0, \ol{\QQ}})$. Thus $y$ hits exactly one geometrically-connected component $S_{K_0, s}$ and is $\mr{Gal}(\ol{\QQ}/F)$-invariant; it follows that $F$ contains $E_{K_0, s}$. The conclusion of the Corollary then follows readily from Theorem \ref{mainthm}. 
\end{proof}
\begin{rmk}
For any prime $\ell$, we can find $F$ and $y \in \mr{Sh}_{K_0}(F)$ as in Corollary \ref{specialthm} such that $\rho_{\ell, y}$ has Zariski-dense, and even open, image in $G^{\mr{ad}}(\QQ_{\ell})$. This follows as in \cite[\S 10.6]{serre:mordell-weil} from Hilbert irreducibility and a Frattini argument. We have not worked out whether one can then show the expected statement that for all $\ell'$, $\rho_{\ell', y}$ also has Zariski-dense image in $G^{\mr{ad}}(\QQ_{\ell'})$. 
\end{rmk}

\begin{rmk}\label{non-adjoint} The basic obstruction to proving compatibility of the $G(\QQ_{\ell})$-local systems on superrigid Shimura varieties arises from the abstract results of \S \ref{abstract}. In Proposition \ref{nearcompatible}, and following its notation except that we allow general $G$ as in \S \ref{svsection}, the primary difficulty in showing the $\lambda$-adic local systems extend to $X_{\mc{O}_F[1/N'\ell]}$ for some $N'$ independent of $\ell$ without the requirement that $G$ is adjoint lies in non-uniqueness of the descents to $X_F$: for varying $\lambda$, any given descents can be twisted by (differently-ramified) characters $\pi_1(X_F) \to Z_{G}(\ol{\QQ}_{\lambda})$. Achieving an analogue of Part (2) of Proposition \ref{nearcompatible} in the Shimura variety setting, however, is additionally obstructed in two ways. First, when we take companions in the $v$-fiber and pull back by tame specialization to the topological fundamental group representation, Margulis superrigidity yields a weaker result: the automorphism $\tau$ appearing in Proposition \ref{svsuperrig} intervenes, but so does a character $\Gamma \to Z_{G^{\mr{der}}}$ (see \cite[Theorem VIII.3.4.(a)]{margulis:discrete}), a further obstruction to analyzing the topological fundamental group representation associated to a companion. Second, even without this topological obstruction, again the uniqueness of descents in Lemma \ref{descentlem} (allowing Corollary \ref{svnearcompatible} to hold over $\kappa(v)$ and not just over $\CC$ or $\ol{\kappa(v)}$) no longer holds. In the Shimura variety setting we can establish compatibility of the canonical local systems upon projection to $G/G^{\mr{der}}$ (and to $G^{\mr{ad}}$, by Theorem \ref{mainthm}), but in summary we are left with two difficulties arising from $Z_{G^{\mr{der}}}$ in establishing compatibility in $G$. Nevertheless, in work-in-progress, we are optimistic of being able to combine our methods with the results of \cite{bakker-shankar-tsimerman:canonicalmodels} to resolve the general case.
\end{rmk}

\newcommand{\etalchar}[1]{$^{#1}$}
\def\cprime{$'$}
\providecommand{\bysame}{\leavevmode\hbox to3em{\hrulefill}\thinspace}
\providecommand{\MR}{\relax\ifhmode\unskip\space\fi MR }

\providecommand{\MRhref}[2]{%
  \href{http://www.ams.org/mathscinet-getitem?mr=#1}{#2}
}
\providecommand{\href}[2]{#2}

\end{document}